\newcommand{\nc}{\newcommand}
\nc\on{\operatorname}
\nc{\red}[1]{\textcolor{red}{#1}}
\nc{\blue}[1]{\textcolor{blue}{#1}}
\nc{\green}[1]{\textcolor{green}{#1}}
\nc{\dn}[1]{\red{From DN: #1}}
\nc{\zl}[1]{\blue{ From ZL: #1}}
\newtheorem{theorem}{Theorem}[subsection]
\newtheorem{proposition}[theorem]{Proposition}
\newtheorem{corollary}[theorem]{Corollary}
\newtheorem{lemma}[theorem]{Lemma}
\theoremstyle{definition}
\newtheorem{question}[theorem]{Question}
\numberwithin{equation}{subsection}
\newcommand{\delete}[1]{}
\newcommand{\BC}{{\mathbb C}}
\newcommand{\HH}{\operatorname{H}}
\newcommand{\Ext}{\operatorname{Ext}}
\renewcommand{\hom}{\operatorname{Hom}}
\newcommand{\Lie}{\operatorname{Lie}}
\renewcommand{\ker}{\operatorname{Ker}}
\newcommand{\BU}{{\mathbb U}}
\newcommand{\bfk}{{\mathsf{k}}}
\newcommand{\blist}{\begin{list}{\rom{(\roman{enumi})}}{\setlength{\leftmargin}{0em}
\setlength{\itemindent}{7ex}
\setlength{\labelsep}{2ex}\setlength{\listparindent}{\parindent}
\usecounter{enumi}}}
\newcommand{\elist}{\end{list}}
\begin{document}

\title[Rings of regular functions]{Realizing rings of regular functions via the cohomology of quantum groups}



\author[Zongzhu Lin]{Zongzhu Lin}
\address{Department of Mathematics\\
Kansas State University \\
Manhattan, KS 66506}
\email{zlin@math.ksu.edu}


\author[Daniel K. Nakano]{Daniel K. Nakano}
\address{Department of Mathematics \\
University of Georgia\\
Athens, GA 30602} 
\email{nakano@math.uga.edu}
\thanks{Research of the second author was supported in part by NSF
grant  DMS-2101941}

\subjclass[2010]{Primary 20G42, 20G10 ; Secondary 17B56}

\date{}

\dedicatory{In memory of Georgia M. Benkart and Brian J. Parshall}

\begin{abstract} Let $G$ be a complex reductive group and $P$ be a parabolic subgroup of $G$. 
In this paper the authors address questions involving the realization of the $G$-module of the global sections 
of the (twisted) cotangent bundle over the flag variety $G/P$ via the cohomology of the small quantum group. 
Our main results generalize the important computation of the cohomology ring for the small 
quantum group by Ginzburg and Kumar \cite{GK}, and provides a generalization of well-known  
calculations by Kumar, Lauritzen, and Thomsen \cite{KLT} to the quantum case and the parabolic setting. As an application we 
answer the question (first posed by Friedlander and Parshall for Frobenius kernels \cite[(3.2)]{FP}) about the realization of 
coordinate rings of Richardson orbit closures for complex semisimple groups via quantum group cohomology. Formulas will be provided 
which relate the multiplicities of simple $G$-modules in the global sections with the dimensions of extension groups over the large quantum group. 
\end{abstract}

\maketitle


\section{Introduction} 

\subsection{} Let $G$ be a complex simple algebraic group, 
${\mathfrak g}$ be its Lie algebra and $\Phi$ be the associated root system. 
Fix $\zeta$ an ${\ell}$th root of unity in ${\mathbb C}$. Using divided powers, Lusztig defined 
a ${\mathbb Z}[q,q^{-1}]$-form, from which one  can specialize $q$ to an $\ell$-th primitive root $\zeta\in \mathbb{C}$ of 1 to get 
 $U_{\zeta}({\mathfrak g})$.  We will call  $U_{\zeta}({\mathfrak g})$ the large quantum group. 
The small quantum group $u_{\zeta}({\mathfrak g})$ is a finite-dimensional normal  Hopf subalgebra of 
$U_{\zeta}({\mathfrak g})$ whose quotient $U_{\zeta}({\mathfrak g})/\!\!/u_{\zeta}({\mathfrak g})$ identifies with 
the ordinary universal enveloping algebra ${\mathcal U}({\mathfrak g})$. This basic fact implies that 
for any $U_{\zeta}({\mathfrak g})$-module, the cohomology group 
$\HH^{\bullet}({\mathfrak u}_{\zeta}({\mathfrak g}),M)$ is a rational $G$-module. 
When $M\cong {\mathbb C}$ (i.e., the trivial module), 
Ginzburg and Kumar \cite{GK} demonstrated that for $\ell>h$ ($h$ is the Coxeter number for $\Phi$) 
the odd degree cohomology vanishes and $\HH^{2\bullet}({\mathfrak u}_{\zeta}({\mathfrak g}),{\mathbb C})
\cong {\mathbb C}[{\mathcal N}]$ where ${\mathcal N}$ is the variety 
of nilpotent elements of ${\mathfrak g}$. Arkhipov, Bezrukavnikov and Ginzburg \cite{ABG} have used 
this computation as a starting point to show that there are beautiful connections between representations 
for quantum groups and the complex algebraic geometry of ${\mathcal N}$. As an application of 
their work, they provided a proof of Lusztig character formula for simple modules in the 
quantum group setting. 

In \cite{BNPP}, Bendel, Nakano, Parshall and Pillen  have also exploited the 
powerful tools available in complex algebraic geometry  to compute the cohomology 
ring $\HH^{\bullet}({\mathfrak u}_{\zeta}({\mathfrak g}),{\mathbb C})$ 
when $\ell \leq h$. Their computation verified that the cohomology ring is 
finitely generated and allowed them to develop a theory of support varieties. Furthermore, 
they computed the support varieties of quantum Weyl modules in the case when $(p,\ell)=1$ for 
every bad prime $p$ for $\Phi$ which proves a quantum version of a conjecture of Jantzen 
(cf. \cite[(6.2.1) Theorem]{NPV}). 

\subsection{} Let $G_k$ be a reductive algebraic group over an algebraically closed field $k$ of positive characteristic $p>0$ with a rational structure over the prime field $ \mathbb{F}_p$. In 1986, Friedlander and Parshall \cite[(3.2)]{FP} posed several questions about 
(i) the realization of orbit closures via support varieties for finite-dimensional 
rational $G_k$-modules and (ii) the realization of the coordinate algebras of the closures of nilpotent $G_k$-orbits in the Lie algebra $ \mathfrak{g}_{k}=\Lie(G_k)$ 
via the cohomology of the first Frobenius kernel of the reductive 
group with coefficients in a suitable algebra. In the quantum group setting, 
the analog of (i) was verified by Bezrukavnikov (\cite {Be}) by computing 
the support varieties of tilting modules. The quantum analog of (ii) can be stated as follows: 
\begin{question} \label{realizequestion1} 
Let $\ell>h$ and $J\subseteq \Delta$ with Levi decomposition 
of ${\mathfrak g}={\mathfrak u}_{J}\oplus {\mathfrak l}_{J}\oplus {\mathfrak u}_{J}^{+}$. 
Does there exist a $U_{\zeta}({\mathfrak g})$-algebra $A$ such that 
$\HH^{\text{odd}}(u_{\zeta}({\mathfrak g}),A)=0$ and 
$\HH^{2\bullet}(u_{\zeta}({\mathfrak g}),A)={\mathbb C}[G\cdot {\mathfrak u}_{J}]$?
\end{question} 
\noindent 
We will also be interested in another related question: 
\begin{question} \label{realizequestion2}
Let $\ell >h$ and $J\subseteq \Delta$ with Levi decomposition 
of ${\mathfrak g}={\mathfrak u}_{J}\oplus {\mathfrak l}_{J}\oplus {\mathfrak u}_{J}^{+}$. 
Does there exist a $U_{\zeta}({\mathfrak g})$-module $M$ such that 
$\HH^{\bullet}(u_{\zeta}({\mathfrak g}),M)$ identifies as a $G$-module 
with $\text{ind}_{P_{J}}^{G} S^{\bullet}({\mathfrak u}_{J}^{*})\otimes \bfk_{\gamma}$ where $\gamma\in X_{P_{J}}\cap X^{+}$?
\end{question} 
The paper is devoted to answering Questions \ref{realizequestion1} and \ref{realizequestion2} for quantum groups. We show that Question \ref{realizequestion2} has a positive answer in full 
generality. When $\Phi$ is of type $A_{n}$, Question \ref{realizequestion1} has an affirmative answer for all $J\subseteq \Delta$. For other root systems  
one needs to impose conditions involving the moment map and the normality of the orbit closure to guarantee the realization. 
In the process of answering Questions \ref{realizequestion1} and
\ref{realizequestion2} we construct
modules over the small quantum group
whose cohomology identifies with 
the global sections of the twisted cotangent bundle on $G/P_{J}$. The
resulting theorem can be viewed as a
generalization of the 
aforementioned result of Ginzburg and Kumar \cite{GK} and of  Kumar,
Lauritzen, and Thomsen \cite{KLT} to the
quantum and parabolic settings. The
computation of cohomologies in \cite{GK}
is a special case of what we computed here. However, \cite{GK} deals with the
Borel subalgebra, for which the Levi
subalgebra $\mathfrak l_J$ is the Cartan
subalgebra $ \mathfrak h$. When the Levi
subalgebra is not torus, the towers of algebras by  regular sequences in the
center of 
$\mathcal U_\zeta(\mathfrak u_J)$ are
not $U_\zeta(\mathfrak l_J)$-modules in
contrast to case of 
$ \mathfrak l_J=\mathfrak h$. 
We had to search for a completely different approach to deal with
$U_\zeta(\mathfrak p_J)$-module structures on the cohomology by proving
a theorem that, for  two 
$U_\zeta(\mathfrak p_J)$-modules $V_1$
and $V_2$, a linear isomorphism
$\phi:V_1\to V_2$ is a
$U_\zeta(\mathfrak p_J)$-module
isomorphism if and only if it is a
$U_\zeta(\mathfrak b)$-module
isomorphism. This property is more
general than what we need in this paper 
and uses deep properties of the generalized
tensor identity and the completeness of
the flag varieties. 

We should also mention that our calculations involve computing the ring structure of $\text{Ext}^{\bullet}_{u_{\zeta}({\mathfrak g})}(D,D)$ for 
a specific $U_{\zeta}({\mathfrak g})$-modules $D$. There is very little known about these $\text{Ext}$-algebras in general and we hope that our 
techniques and explicit computations will lead to a better understanding of these structures. It should be noted that the presence of strong cohomological 
vanishing results in the complex reductive group case allows us to make computations for the $\text{Ext}$-algebra in the quantum setting. 

It should be observed that Questions~\ref{realizequestion1} and \ref{realizequestion2} are still not completely resolved for reductive groups in positive characteristic. For 
Richardson orbits, Question~\ref{realizequestion1} holds by \cite{CLNP}, and for arbitrary orbits in classical groups in \cite{NT} 
when the field has good characteristics. Partial results for exceptional groups are provided in \cite{NT}. We will indicate the necessary vanishing results needed to establish positive answers to Questions \ref{realizequestion1} and \ref{realizequestion2} over fields of positive characteristics. 

\subsection{} The outline of the paper is as follows. In Section 2, we review the  construction 
of quantum groups for ${\mathfrak g}$ and its parabolic subalgebras. For our purposes these are 
important considerations to construct  suitable $U_{\zeta}({\mathfrak g})$-modules. In Section 3, we prove a general theorem 
which gives sufficient conditions to realize the global sections of the twisted cotangent bundle as the cohomology of the small quantum group with coefficients in a 
$U_{\zeta}({\mathfrak g})$-module $N$. In Section 4 we produce a module $N$ which gives a non-shifted realization of these global sections and as an application we
provide an affirmative answer to Question~\ref{realizequestion1}.  Later in Section 5, we apply the theory of tilting modules to yield a shifted realization of the global 
sections. This shifted realization result is a generalization of both the quantum and the parabolic versions of the work of Kumar, Lauritzen and Thomsen \cite{KLT}. 

Section 6 is devoted to investigating the connections between $G$-composition factors in the coordinate rings with 
quantum group cohomology. We demonstrate that computing certain $\text{Ext}$-groups for modules over the 
large quantum groups is equivalent to determining the $G$-composition factors in the modules $\text{ind}_{P_{J}}^{G} S^{\bullet}({\mathfrak u}_{J}^{*})\otimes \bfk_{\gamma}$ for a character $ \gamma $ of $P$. 
It is well-known that the later problem involves Kostant's Partition Function which can be related to  the coefficients of certain Kazhdan-Lusztig Polynomials (cf. \cite{Br2}).  In Section 7, we indicate how one can extend our results to Frobenius kernels under suitable cohomological vanishing assumptions in positive characteristic cases. Finally, in Section 8, we include an appendix which gives details on Hopf algebra actions on cohomology rings. The results are of independent interest and 
form the foundational basis for the work in the paper. For instance, in Section~\ref{parabolic}, we show that two modules for the algebraic group (quantum group, or more general Hopf algebras) are isomorphic if and only if they are isomorphic when restricted to a particular subalgebra provided the (rational) induced functor sends the trivial module for the subalgebra to the trivial module for the larger algebra. Thus many equivariant questions can be reduced to equivariant questions for a Borel subgroups (subalgebras). \subsection{Acknowledgments} The authors dedicate this paper to Georgia Benkart and Brian Parshall for their many contributions in the area of Lie and representation theory. The authors (along with Jens C. Jantzen) were honored to serve with Georgia and Brian on the organizing committee for the 2004 AMS Summer Research Conference in Snowbird, Utah, celebrating James E. Humphreys' 65th birthday. Unfortunately, he also passed away during the pandemic in 2020. The authors thank the referee for a careful reading of the paper.

\section{Quantum Groups and Hopf Algebra Actions}

\subsection{} \label{sec:2.1} We will follow the conventions as described in 
\cite[Section 2]{BNPP}. Let $G:=G_{\mathbb Q}$ be a simply connected simple
algebraic group which is defined and split over ${\mathbb Q}$ with Lie algebra ${\mathfrak g}$ over $\mathbb Q $ with a fixed Chevalley basis. 
Let $\Phi$ be the irreducible root system associated to ${\mathfrak g}$ with respect to a fixed maximal split torus $T$ of $G$.
Let $\Delta$ be the corresponding set of simple roots. The set $\Phi$ spans a real
vector space $\mathbb E$ with positive definite inner product
$\langle u,v\rangle$, $u,v\in \mathbb E$, adjusted so that
$\langle\alpha,\alpha\rangle=2$ if $\alpha\in\Phi$ is a short root.
If $\alpha\in\Phi$ then let $\alpha^\vee=\frac{2}{\langle\alpha,\alpha\rangle}\alpha$ be the coroot. 
For $J\subseteq \Delta$, let $\Phi_{J}=\Phi\cap {\mathbb Z}J$ be the root system of 
$\Phi$ generated by $J$. Let $W$ be the Weyl group corresponding to $T$. For $J\subset \Delta$,  let 
$W_{J}$ be the Weyl group of $\Phi_{J}$, viewed as a subgroup of $W$, and ${^J}W$ be
the set of minimal length coset representatives for $W_{J}\backslash W$. Let $ w_0\in W$ be the unique longest element. We will use $ w_{0, J}\in W_J$ to denote the unique longest element of $ W_J$. 

Define the fundamental dominant weights $\omega_{\alpha_1},\cdots,\omega_{\alpha_n}$
by $\langle\omega_{\alpha_{i}},\alpha_j^\vee\rangle=\delta_{i,j}$ for all $\alpha_j\in \Delta$, so that the weight lattice $X=X(T)={\mathbb Z}\omega_{\alpha_1}\oplus \cdots\oplus{\mathbb Z}\omega_{\alpha_n}$ and the set of dominant weights $X^+={\mathbb N}\omega_{\alpha_{1}}\oplus\cdots\oplus {\mathbb N}\omega_{\alpha_n}$.

Let ${\mathfrak t}$ be a Lie algebra of $T$ which is a Cartan subalgebra of $\mathfrak g$. 
Given $\alpha\in\Phi$, let ${\mathfrak g}_\alpha$ be the $\alpha$-root space. Put ${\mathfrak
b}^+=\mathfrak t\oplus\bigoplus_{\alpha\in\Phi^+}{\mathfrak
g}_\alpha$ (the positive Borel subalgebra), and ${\mathfrak b}=
\mathfrak t\oplus\bigoplus_{\alpha\in\Phi^-}{\mathfrak g}_\alpha$
(the Borel subalgebra opposite to $\mathfrak{b}^+$). We will denote by $B^+$ and $ B$ the corresponding Borel subgroups of $G$ containing the maximal torus $T$. More generally, given a subset $J \subseteq \Delta$, 
one can consider the  parabolic subgroup $P_J$ of $G$ containing $B$ with a fixed Levi subgroup $L_J$ (containing $T$)  such that their Lie algebras  are $\Lie(L_J)={\mathfrak l}_J$ and 
$\Lie(P_J)={\mathfrak p}_J = {\mathfrak l}_J\oplus{\mathfrak u}_J$ of $\mathfrak g$. Note that $\Phi_J$ is the root system of $L_J$.  
In this case, set $X_{P_{J}}=\{\lambda\in X:\ \langle \lambda, \alpha^{\vee} \rangle =0\  \text{for all $\alpha\in J$}\}$. Observe that 
$X_{P_{J}}$ identifies with the one-dimensional representations of $L_{J}$ or the  the character group  $\hom(L_J, G_m)$.  

\subsection{} Throughout this paper let $\ell >1$ be a fixed odd positive 
integer. If $\Phi$ has type $G_2$, then we assume that $3$ does not divide 
$\ell$. Let ${\mathcal A}={\mathbb Z}[q,q^{-1}]$ with fraction field ${\mathbb Q}(q)$. 
Let $\zeta\in\mathbb C$ be a primitive $\ell$th root of unity
and $\mathsf k={\mathbb Q}(\zeta)$. One can regard $\bfk$ as an $\mathcal
A$-algebra by means of the homomorphism $\mathbb{Z}[q,q^{-1}]\to \bfk$ where 
$q\mapsto \zeta$.

The quantized enveloping algebra $\BU_q({\mathfrak g})$ of
$\mathfrak g$ is the ${\mathbb Q}(q)$-algebra with generators 
$E_{\alpha}$, $K_{\alpha}^{\pm 1}$, $F_{\alpha}$, $\alpha\in \Delta$ and
relations (R1)---(R6) listed in \cite[(4.3)]{Jan2}.
The algebra $\BU_q({\mathfrak g})$ has two $\mathcal A$-forms. One is
$U_q^{\mathcal A}({\mathfrak g})$ defined by Lusztig \cite{lusztig:root} as  the $ \mathcal A$-subalgebra of $\BU_q({\mathfrak g})$ generated by 
$E^{(n)}_{\alpha}=E^{n}_{\alpha}/[n]!$, $K_{\alpha}^{\pm 1}$, $F^{(n)}_{\alpha}=F^{n}_{\alpha}/[n]!$. The other is 
${\mathcal U}_q^{\mathcal A}({\mathfrak g})$ defined  De
Concini and Kac \cite{DK}) as the $\mathcal A$-subalgebra of $\BU_q({\mathfrak g})$ generated by $ \{ E_\alpha, K_\alpha^{\pm 1}, F_{\alpha}\;|\; \alpha \in \Delta\}$. Both are free as $\mathcal A$-modules and  Hopf $\mathcal A$-subalgebras of  $\BU_q({\mathfrak g})$. Since  ${\mathcal U}_q^{\mathcal A}({\mathfrak g})\subseteq U_q^{\mathcal A}({\mathfrak g})$, applying the functor $ \bfk \otimes_{\mathcal A}-$, one obtains a Hopf $ \bfk$-algebra homomorphism $\mathcal U_\bfk(\mathfrak g)=\bfk\otimes_{\mathcal A}{\mathcal U}_q^{\mathcal A}({\mathfrak g})\to \bfk\otimes_{\mathcal A}U_q^{\mathcal A}({\mathfrak g})=U_\bfk({\mathfrak g})$. 
 These two Hopf $\bfk$-algebras $\mathcal U_\bfk(\mathfrak g) $ and $U_\bfk({\mathfrak g})$ play the roles
analogous to the the universal
enveloping algebra of its Lie algebra, respectively and  hyperalgebra (algebra of distributions) of a reductive group over fields of positive characteristics \cite{Jan1}.
 
\delete{Set 
\begin{equation*}\label{firstquantumgroup}
U_\zeta({\mathfrak g}):=k\otimes_{\mathcal A}U^{\mathcal A}_q({\mathfrak g})/\langle1\otimes 
K_\alpha^l-1\otimes 1,\alpha\in\Delta \rangle,\end{equation*}
\zl{lusztig does not take the quotient for quantum group, same for $\mathcal U_\zeta$}
 where $\langle \cdots \rangle$ denotes ``the ideal generated'' by the elements within the brackets. }
  Let $\BU_{q}^{0}$ be the subalgebra of $\BU_q({\mathfrak g})$  generated by 
the $\{K_\alpha^{\pm 1}:\ \alpha\in \Delta\}$.  Then $\BU_{q}^{0}$ is exactly the Laurent polynomial ring $\mathbb Q(q)[K_\alpha ^{\pm 1}\; |\; \alpha \in \Delta]$. Let $ \mathcal U^{0, \mathcal A}=\mathcal U^{ \mathcal A}(\mathfrak g)\cap \BU_{q}^{0}$ and $  U^{0, \mathcal A}= U^{ \mathcal A}(\mathfrak g)\cap \BU_{q}^{0}$. We have $ \mathcal U^{0, \mathcal A}\subseteq  U^{0, \mathcal A}$. Similarly, one has a Hopf algebra homomorphism $  \mathcal U^{0}_{\bfk}\to  U^{0}_\bfk$. 

The elements $E_\alpha,\delete{K_\alpha,} F_\alpha$, $\alpha\in\Delta$, in $U_{\bfk}({\mathfrak g})$ generate a
finite dimensional Hopf subalgebra, denoted by $u_\zeta({\mathfrak g})$, of $U_\bfk({\mathfrak g})$.
The Hopf algebra $u_{\zeta}({\mathfrak g})$ will be referred to as the {\em small quantum group}. We remark that $u_{\zeta}({\mathfrak g})$ is 
a normal  Hopf subalgebra of $U_\bfk({\mathfrak g})$ such that 
$U_\bfk({\mathfrak g})/\!\!/u_{\zeta}({\mathfrak g})=U_\bfk({\mathfrak g})/\langle \ker(\epsilon: u_\zeta(\mathfrak g)\to k)\rangle $ is a Hopf algebra \cite{Lin2} and the $(U_{\bfk}({\mathfrak g})/\!\!/u_{\zeta}({\mathfrak g}))/\langle K_\alpha ^l-1\;|\; \alpha\in \Delta\rangle \cong {\mathcal U}({\mathfrak g_\bfk})$, 
where ${\mathcal U}({\mathfrak g}_k)$ is the ordinary universal enveloping algebra ${\mathfrak g}_k=\mathfrak g\otimes_{\mathbb Q}k$ over $ k$, see \cite{lusztig:root,lusztig:finite-dim} for more details.

We will also modify some of the algebras a little and define 
 \begin{equation*}\label{secondquantumgroup}
{\mathcal U}_{\zeta}({\mathfrak g})= 
{\mathcal U}_{\bfk}({\mathfrak g})/\langle  K_\alpha^l-1,\alpha\in\Delta\rangle
\end{equation*}
and denote $U_\zeta(\mathfrak g)=U_\bfk(\mathfrak g)$.
In this paper we will use the same letters $ E_\alpha$, $F_\alpha$, etc, for elements in any one of the algebra $\mathcal U_\bfk(\mathfrak g)$, $ \mathcal U_\zeta(\mathfrak g)$, $   U_\zeta(\mathfrak g)$, or in $   u_\zeta(\mathfrak g)$ once the context is understood.

The (Hopf) algebra ${\mathcal U}_\zeta({\mathfrak g})$ has a central subalgebra
${\mathcal Z}$ which is  generated by $\{ E_\alpha^l, F_\alpha^l\;|\; \alpha\in \Delta\}$ and is a normal Hopf $\bfk$-subalgebra  such that $u_{\zeta}({\mathfrak g})\cong{\mathcal
U}_\zeta({\mathfrak g})/\!\!/{\mathcal Z}$ (cf. \cite[Cor. 3.1]{DK} for more details). 

We will assume throughout the paper that the $U_\zeta({\mathfrak g})$-modules $M$ are 
{\em integrable of type 1} (cf. \cite[Section 2.2]{BNPP}). Let $\text{Fr}:U_{\zeta}({\mathfrak g})\rightarrow 
{\mathcal U}({\mathfrak g}_\bfk)$ be the (quantum) Frobenius homomorphism. 
If $N$ is a (locally finite) ${\mathcal U}({\mathfrak g}_\bfk)$-module 
then the $U_{\zeta}({\mathfrak g})$-module $N^{[1]}$ 
(integrable, type 1) is  the inflation of $N$ by $\on{Fr}$. 
Conversely, for any $U_\zeta(\mathfrak g)$-module $M$ 
which is $u_{\zeta}(\mathfrak g)$-trivial, 
then $M\cong N^{[1]}$ for some  
$ \mathcal U(\mathfrak g)$-module $N$.  

\subsection{\bf Levi and Parabolic Subalgebra}\label{PBW}
For each $\alpha \in \Delta$, Lusztig has defined an automorphism
$T_{\alpha}$ of $\BU_q({\mathfrak g})$ (cf. \cite[Ch. 8]{Jan3}, \cite{lusztig:finite-dim}). If $s_{\alpha}$ 
is a simple reflection in $W$, let $T_{s_{\alpha}} := T_{\alpha}$. 
More generally, given any $w \in W$, let $w = s_{\beta_1}s_{\beta_2}\cdots s_{\beta_n}$ be
a reduced expression, and define $T_w := T_{\beta_1}\cdots T_{\beta_n}
\in \text{Aut}(\BU_q({\mathfrak g}))$. The automorphism $T_w$ is independent of the
reduced expression of $w$ \cite[Thm 3.2]{lusztig:root}.   
Now for each ${\gamma}\in \Phi^{+}$, 
one can define
$E_{\gamma}=T_w(E_{\beta})$, 
$F_{\gamma}=T_w(F_{\beta})$  for 
$ w\in W$ with $w(\beta)\in \Phi^+$ 
and $ \beta\in \Delta$ \cite[Prop. 1.8]{lusztig:finite-dim} 
and \cite[Section 2.4]{BNPP}.  
Note that $E_{\gamma}$ has weight $\gamma$, and 
$F_{\gamma}$ has weight $-\gamma$. 

Now let $J \subseteq \Delta$ and fix a 
reduced expression $w_0 =s_{\beta_1}\cdots s_{\beta_N}$ that starts with a 
reduced expression for the long element $w_{0,J}$ for $W_{J}$. If $w_{0,J} =
s_{\beta_1}\cdots s_{\beta_M}$, then $s_{\beta_{M+1}}\cdots s_{\beta_N}$ is a
reduced expression for $w_J=w_{0,J}w_0$. Let ${\mathfrak l}_{J}$ be the Levi subalgebra 
corresponding to $J$ and ${\mathfrak p}_{J}$ the parabolic 
subalgebra containing $ \mathfrak{b}$ and $\mathfrak{l}_J$. The universal enveloping algebras (over the field $\bfk$) will be denoted by ${\mathcal U}({\mathfrak l}_{J})$ and
${\mathcal U}({\mathfrak p}_{J})$. The Lie algebras considered here will be over the field $ \bfk$ without adding the subscript $ \bfk$ (despite the fact it was assumed that $ \mathfrak g$ is a Lie algebra over $ \mathbb Q$). 

One can naturally define corresponding quantized
enveloping algebras $\BU_q({\mathfrak l}_{J})$ and $\BU_q({\mathfrak p}_{J})$ over $\mathbb Q(q)$ as subalgebras of $\BU_q(\mathfrak g)$.  The subalgebra $\BU_q({\mathfrak l}_{J})$ is generated by $\{E_{\alpha},
F_{\alpha} : \alpha \in J\} \cup \{K_{\alpha}^{\pm 1} : \alpha \in \Pi\}$, and
$\BU_q({\mathfrak p}_{J})$ is the subalgebra generated by $\{E_{\alpha}: \alpha \in J\}
	\cup \{F_{\alpha}, K_{\alpha}^{\pm 1} : \alpha \in \Pi\}$. Using the PBW type of basis constructed by Lusztig in \cite{lusztig:finite-dim}, these subalgebras has two different $ \mathcal{A}$-forms as pure subalgebras of $U_{q}^{\mathcal{A}}(\mathfrak{g})$ and $\mathcal{U}_{q}^{\mathcal{A}}(\mathfrak{g})$ respectively.   Upon specialization of $q$ to $ \zeta$ as in Section~\ref{sec:2.1}, one obtains 
the subalgebras $U_{\zeta}({\mathfrak l}_{J})$, $U_{\zeta}({\mathfrak p}_{J})$, 
$u_{\zeta}({\mathfrak l}_{J})$, $u_{\zeta}({\mathfrak p}_{J})$ of
$U_\zeta({\mathfrak g})$, and $\mathcal{U}_{\zeta}({\mathfrak l}_{J})$ and
$\mathcal{U}_{\zeta}({\mathfrak p}_{J})$ of ${\mathcal U}_\zeta({\mathfrak g})$.
One can also make analogous constructions with the opposite parabolic ${\mathfrak p}^{+}_{J}$.  The subalgebras  $U_{\zeta}({\mathfrak l}_{J})$ and $U_{\zeta}({\mathfrak p}_{J})$ were first constructed in \cite{APW}. 

We remark that the algebras $U_{\zeta}({\mathfrak l}_{J})$,  $U_{\zeta}({\mathfrak p}_{J})$, $\mathcal{U}_{\zeta}({\mathfrak l}_{J})$, and
$\mathcal{U}_{\zeta}({\mathfrak p}_{J})$ are Hopf algebras. The inclusion  $\mathcal{U}_q^{\mathcal{A}}(\mathfrak{g})\subseteq U_q^{\mathcal{A}}(\mathfrak{g}) $ of $\mathcal{A}$-algebras induces a homomorphism of Hopf algebras: $ \mathcal{U}_\zeta(\mathfrak{g})\rightarrow U_\zeta(\mathfrak{g}) $ with image being $u_{\zeta}(\mathfrak{g})$. This Hopf algebra homomorphism induces Hopf algebra homomorphisms $ \mathcal{U}_\zeta( \star)\rightarrow U_\zeta(\star) $  with $ \star ={\mathfrak l}_{J}, {\mathfrak p}_{J}$

The above  reduced expression
for $w_0$ (beginning with one for $w_{0,J}$) defines an ordering on  positive roots
$\Phi^+=\{\gamma_1, \cdots \gamma_N\}$, with $\{\gamma_1, \cdots, \gamma_{N_J}\}=\Phi^+_J$ being the positive roots of the reductive Lie algebra $\mathfrak l_J$ and $\{\gamma_{N_J+1},\cdots,  \gamma_N\}$ being the roots in the nilpotent radical $ \mathfrak u^+_J$ of $ \mathfrak p^+_J$. 
With the PBW basis $\{F_{\gamma_{1}}^{(a_{1})}\cdots F_{\gamma_N}^{(a_N)}\;|\; ( a_1, \cdots, a_N)\in \mathbb N^N\}$ of $\BU_q(\mathfrak n)$ as described in \cite[Appendix]{lusztig:root} using this reduced expression of $w_0$,  one can define
a subalgebra $\BU_q({\mathfrak u}_{J})$ of $\BU_q(\mathfrak n)\subseteq \BU_q(\mathfrak p_J)$ generated by all $F_{\gamma_i}$ with $ i>N_J$.  $\BU_q({\mathfrak u}_{J})$ is an augmented normal subalgebra of $\BU_q(\mathfrak{p}_J)$  (although not a Hopf subalgebra of $\BU_{q}(\mathfrak{g})$). The algebra $\BU_q({\mathfrak u}_{J})$ is  analogous to 
${\mathcal U}({\mathfrak u}_{J}) \subseteq {\mathcal U}({\mathfrak p}_{J})$. 

  The $ \mathbb Q(q)$-subalgebra $\BU_q({\mathfrak u}_{J})$ in $ \BU_q(\mathfrak g)$ is spanned by the $F_{\gamma_{N_J+1}}^{(a_{N_J+1})}\cdots F_{\gamma_N}^{(a_N)}$, $a_i \in
\mathbb{N}$. According to \cite[Lemma 2.4.1]{BNPP} $\BU_q({\mathfrak u}_{J})$ is a
subalgebra of $\BU_q({\mathfrak p}_{J})$ and independent of the choice of reduced expression for
$w_0$. Again by specializing $q$ to $ \zeta$, one obtains algebras $U_{\zeta}({\mathfrak u}_{J})$ and 
$u_{\zeta}({\mathfrak u}_{J})$ as subalgebras of $U_\zeta(\mathfrak{g})$ and $ \mathcal U_\zeta(\mathfrak u_J)$ as a subalgebra of $\mathcal U_\zeta(\mathfrak p_J)$. 
Again, the Hopf algebra homomorphism $ \mathcal{U}_\zeta(\mathfrak{g})\rightarrow U_\zeta(\mathfrak{g}) $ restricts to an algebra homomorphism $ \mathcal{U}_\zeta( \mathfrak{u}_J)\rightarrow U_\zeta(\mathfrak{u}_J) $ with image being $ u_\zeta(\mathfrak u_J)$.

\subsection{Adjoint action}\label{sec:2.4} Since $\mathcal U_\zeta(\mathfrak g)$ is a Hopf algebra, and $E_{\beta}^l, F_\beta^l$, and $ K_i^l$ are central elements, then the adjoint action of $\mathcal U_\zeta(\mathfrak g)$ on the central elements is trivial in the sense that $ u\cdot z=\epsilon(u)z$ for all $ u\in \mathcal U_\zeta(\mathfrak g)$ and $ z$ central in $\mathcal U_\zeta(\mathfrak g)$.  Thus, the adjoint action of  elements $ E_\beta^l$ and $ F_\beta^l$ on $\mathcal U_\zeta(\mathfrak g)$ is zero and $ K_i^l$ acts on $\mathcal U_\zeta(\mathfrak g)$ as the identity. Therefore, the adjoint action of $\mathcal U_\zeta(\mathfrak g)$ on $\mathcal U_\zeta(\mathfrak g)$ factors through $u_\zeta(\mathfrak g)$. In particular, an  element $u\in u_\zeta(\mathfrak p_J)$ acts on $ f^l$ via the counit $ \epsilon(u)$ for $ f=E_\beta, F_\beta, K_i$.  Furthermore, for any subalgebra $A$ of $\mathcal U_\zeta(\mathfrak g)$ we can also construct the adjoint action of $A$ on certain subspaces  of $\mathcal U_\zeta(\mathfrak g)$. This action by $A$ always factors through its image in $ u_\zeta(\mathfrak g)$. 

To extend the adjoint action of $ u_\zeta(\mathfrak g) $ on $\mathcal U_\zeta(\mathfrak g)$ we have to return to the $\mathcal A$-forms. 
Both $ \mathcal{A}$-forms $ U_{q}^{\mathcal{A}}(\mathfrak{g})$ and $ \mathcal{U}_q^{\mathcal{A}}(\mathfrak{g})$ are  Hopf  $\mathcal{A}$-subalgebras of $\BU_q(\mathfrak{g})$. The following proposition is from {\cite[Prop. 2.9.2]{ABG}}. 
For the definition and properties of the adjoint action of Hopf algebras we refer the reader to Section~\ref{Hopf-algebra-action}. 

\begin{proposition}\label{2.4.1} Let $ \mathcal A_\zeta$ be the localization of $\mathcal A$ at the maximal ideal generated by $ q-\zeta$. The algebra $\mathcal{U}_q^{\mathcal{A}_\zeta}(\mathfrak{g})$ is invariant under the left adjoint action of $ {U}_q^{\mathcal{A}_\zeta}(\mathfrak{g})$. 
In particular,  the algebra $\mathcal{U}_\zeta(\mathfrak{g})$ is a left   $ {U}_\zeta(\mathfrak{g})$-module algebra.
\end{proposition}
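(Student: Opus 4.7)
My plan is to exploit the module-algebra structure of the adjoint action together with the fact that both algebras possess small generating sets. Recall from Section~\ref{Hopf-algebra-action} that, for any Hopf algebra $H$, the left adjoint action makes $H$ a left $H$-module algebra, so that
$$
\mathrm{ad}(u)(xy) \;=\; \sum \mathrm{ad}(u_{(1)})(x)\,\mathrm{ad}(u_{(2)})(y) \qquad \text{and} \qquad \mathrm{ad}(uv)(x) \;=\; \mathrm{ad}(u)\bigl(\mathrm{ad}(v)(x)\bigr).
$$
Since $U_q^{\mathcal A_\zeta}(\mathfrak g)$ is a Hopf $\mathcal A_\zeta$-subalgebra of $\mathbb U_q(\mathfrak g)$, its coproduct stays inside $U_q^{\mathcal A_\zeta}(\mathfrak g)\otimes U_q^{\mathcal A_\zeta}(\mathfrak g)$. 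The first identity then reduces the problem to checking ad-invariance on an $\mathcal A_\zeta$-algebra generating set of $\mathcal U_q^{\mathcal A_\zeta}(\mathfrak g)$, while the second reduces it to checking only when $u$ ranges over an $\mathcal A_\zeta$-algebra generating set of $U_q^{\mathcal A_\zeta}(\mathfrak g)$.

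Concretely, I would take the generators of $\mathcal U_q^{\mathcal A_\zeta}(\mathfrak g)$ to be $\{E_\beta, F_\beta, K_\beta^{\pm 1} : \beta \in \Delta\}$ and those of $U_q^{\mathcal A_\zeta}(\mathfrak g)$ to be the Lusztig divided powers $\{E_\alpha^{(n)}, F_\alpha^{(n)}, K_\alpha^{\pm 1} : \alpha \in \Delta,\ n \geq 1\}$. The case $u = K_\alpha^{\pm 1}$ is trivial, because $\mathrm{ad}(K_\alpha^{\pm 1})$ rescales a weight vector by an integer power of $q$. The remaining cases reduce to direct expansion of $\mathrm{ad}(E_\alpha^{(n)})$ and $\mathrm{ad}(F_\alpha^{(n)})$ applied to $E_\beta, F_\beta$, and $K_\beta^{\pm 1}$, using the divided-power coproducts and Lusztig's higher-order commutator formulas \cite{lusztig:root}. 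The off-diagonal cases $\alpha \ne \beta$ collapse quickly via the quantum Serre relations, giving expressions visibly in $\mathcal U_q^{\mathcal A_\zeta}(\mathfrak g)$.

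The principal obstacle is the diagonal pair $\mathrm{ad}(E_\alpha^{(n)})(F_\alpha)$ and $\mathrm{ad}(F_\alpha^{(n)})(E_\alpha)$. Here Lusztig's commutator identity expresses the result as an $\mathcal A$-linear combination of PBW monomials of the form $F_\alpha^{a}\, b_c(K_\alpha)\, E_\alpha^{d}$, where $b_c(K_\alpha)$ is a Gaussian binomial bracket in $K_\alpha$; the key verification is that, after expanding each such bracket as a polynomial in $K_\alpha^{\pm 1}$, every coefficient is regular at $q=\zeta$, so that the expansion lies in the $\mathcal A_\zeta$-subalgebra generated by $E_\alpha, F_\alpha, K_\alpha^{\pm 1}$. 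The standing hypotheses on $\ell$ (odd, and coprime to $3$ in type $G_2$) are exactly what guarantees this: the only $q$-integers that could fail to be units in $\mathcal A_\zeta$ are those whose $q$-analog vanishes at $\zeta$, namely multiples of $\ell$, and these never enter the denominators of the coefficients produced by the commutator formula. Finally, the ``in particular'' assertion follows by base change, since applying $\bfk \otimes_{\mathcal A_\zeta}(-)$ and quotienting by $\langle K_\alpha^\ell - 1 : \alpha\in\Delta\rangle$ is compatible with the coproduct and antipode, so the ad-action on $\mathcal U_q^{\mathcal A_\zeta}(\mathfrak g)$ descends to a left module-algebra action of $U_\zeta(\mathfrak g)$ on $\mathcal U_\zeta(\mathfrak g)$.
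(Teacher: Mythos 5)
The paper itself gives no proof of this proposition: it is cited verbatim from \cite[Prop.\ 2.9.2]{ABG}, so your argument is offered in place of a black-boxed reference rather than as an alternative to a proof in the text. Your overall strategy --- reduce to algebra generators via the module-algebra identities and then verify the adjoint action of each Lusztig divided power on each De Concini--Kac generator --- is the right one, and presumably the one underlying \cite{ABG}. But two of the key steps as you have stated them do not hold up.

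First, the reduction ``to a generating set of $\mathcal{U}_q^{\mathcal A_\zeta}(\mathfrak g)$'' via $\mathrm{ad}(u)(xy)=\sum \mathrm{ad}(u_{(1)})(x)\,\mathrm{ad}(u_{(2)})(y)$ is circular as written: the Sweedler components $u_{(1)},u_{(2)}$ of a divided power $E_\alpha^{(n)}$ include $E_\alpha^{(n)}$ itself (paired with $1$ or with $K_\alpha^{\,n}$), so one cannot appeal to ad-invariance of the intermediate factors without already knowing it. The fix is to run an induction on $n$, exploiting that in $\Delta(E_\alpha^{(n)})=\sum_{i+j=n}q_\alpha^{ij}E_\alpha^{(i)}K_\alpha^{\,j}\otimes E_\alpha^{(j)}$ the interior terms have strictly smaller divided-power degree; this makes, for fixed $n$, the set $\{x:\mathrm{ad}(E_\alpha^{(n)})(x)\in\mathcal{U}_q^{\mathcal A_\zeta}\}$ multiplicatively closed and reduces you to generators legitimately. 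This needs to be said.

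Second, and more substantively, you have located the difficulty in the wrong place. The diagonal computation $\mathrm{ad}(E_\alpha^{(n)})(F_\alpha)$ is in fact harmless: a short rank-one calculation shows $\mathrm{ad}(E_\alpha)^3(F_\alpha)=0$ (the span of $F_\alpha$, $\mathrm{ad}(E_\alpha)(F_\alpha)$ and $\mathrm{ad}(E_\alpha)^2(F_\alpha)\in\bfk^\times\,E_\alpha K_\alpha^{-1}$ is an $\mathrm{ad}$-stable three-dimensional module for the $\alpha$-copy of $\mathbb{U}_q(\mathfrak{sl}_2)$), hence $\mathrm{ad}(E_\alpha^{(n)})(F_\alpha)=0$ for all $n\geq 3$, and for $n\leq 2$ regularity at $q=\zeta$ is automatic since $\ell$ is odd. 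The genuinely delicate cases are precisely the ones you wave off as ``collapsing via Serre relations'': for $\alpha\neq\beta$, one gets $\mathrm{ad}(E_\alpha^{(n)})(F_\beta)=c_n\,F_\beta E_\alpha^{(n)}$ with $c_n=\prod_{i=0}^{n-1}\bigl(1-q_\alpha^{\,2i-\langle\beta,\alpha^\vee\rangle}\bigr)\in\mathcal A$, and similarly for $\mathrm{ad}(E_\alpha^{(n)})(K_\beta)$ and $\mathrm{ad}(E_\alpha^{(n)})(E_\alpha)$. Here $E_\alpha^{(n)}\notin\mathcal{U}_q^{\mathcal A_\zeta}$ once $n\geq\ell$, so the denominator $[n]_\alpha!$ most certainly \emph{does} appear, contrary to your claim that multiples of $\ell$ ``never enter the denominators.'' What one must show is that $c_n/[n]_\alpha!\in\mathcal A_\zeta$, a genuine cyclotomic counting argument: because $\gcd(2,\ell)=1$, the number of indices $0\leq i<n$ for which $1-q_\alpha^{\,2i-\langle\beta,\alpha^\vee\rangle}$ vanishes at $q=\zeta$ is at least the number of $1\leq m\leq n$ with $[m]_\alpha(\zeta)=0$, so the zero of $c_n$ at $q=\zeta$ matches (or exceeds) that of $[n]_\alpha!$ and the quotient is regular. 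This cancellation is exactly where the localization at $q-\zeta$ and the hypothesis on $\ell$ are used; over $\mathcal A$ itself the statement fails. As written, your proof asserts the conclusion of this analysis but misattributes where the content lies and gets the mechanism wrong, which is a real gap.
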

  
The proof in \cite[Prop. 5.3]{Lin1} shows that  $u_{\zeta}({\mathfrak u}_{J})$ is invariant under the left adjoint action of $U_\zeta(\mathfrak{p}_J)$.  The above proposition 
defines a  $U_\zeta(\mathfrak{p}_J)$-module algebra structure on $\mathcal{U}_{\zeta}(\mathfrak {u}_{J})$ such that the homomorphism $ \mathcal{U}_{\zeta}({\mathfrak u}_{J})\rightarrow u_{\zeta}({\mathfrak u}_{J})$ 
is a homomorphism of $U_\zeta(\mathfrak{p}_J)$-module algebras. Since the algebra  $\mathcal{U}_{\zeta}({\mathfrak u}_{J})$ is generated by $ F_{\gamma_i}$ for $i=N_J+1, \dots, N$, one only needs to define the 
action on these elements. The algebra $U_\zeta(\mathfrak{p}_J)$ is generated as an algebra by $ E_{\alpha}, E_{\alpha}^{(l)}$ and $ F_{\beta},\; F_{\beta}^{(l)}$ with $ \alpha \in J $ and $\beta \in \Delta$ together with $ U_\zeta^0$.  
The action of $E_{\alpha}, F_{\beta}, K_{\beta}$  on $ \mathcal{U}_\zeta(\mathfrak{u}_J)$ can be expressed by writing down the comultiplication. We only consider the action of $ F_{\beta}^{(l)}$ and $E_{\alpha}^{(l)}$. In the formal definition of the action 
of  $ F_{\beta}^{(l)}$, one follows the argument in \cite[Prop. 5.3]{Lin1} together with the argument of \cite[Lemma 8.5]{lusztig:root}. The action of $E_{\alpha}^{(l)}$ follows a similar argument in \cite[5.1]{Lin2}.

\subsection{} The Hopf  subalgebras $U^0_\zeta$  and  $U^0_\zeta(\mathfrak p_J)$ act on the algebras $ \mathcal U_\zeta(\mathfrak u_J)$ and $ U_\zeta(\mathfrak u_J)$. Moreover, the algebra homomorphism $ \mathcal U_\zeta(\mathfrak u_J)\to  U_\zeta(\mathfrak u_J)$ is a homomorphism of  $U_\zeta(\mathfrak p_J)$-module algebras. In this paper, we will need to discuss $U^0_\zeta$, $ U_\zeta(\mathfrak l_J)$, and $U_\zeta(\mathfrak p_J)$-equivariant $\mathcal U_\zeta(\mathfrak u_J)$-modules and $ U_\zeta(\mathfrak u_J)$-modules.  This is discussed in the appendix. By $ U^0_\zeta$ and $ U_\zeta(\mathfrak l_J)$-modules, we mean the integrable of type 1 modules, which are locally finite and with a weight space decomposition as described in \cite{APW} and \cite{Lin1}. 

\section{Cohomological Calculations}

\subsection{}\label{sec:3.1} In this section we will present a general method for making cohomological computations for quantum groups given two key assumptions. 
The first assumption appears in the next proposition and involves computing cohomology for ${\mathcal U}_{\zeta}({\mathfrak u}_{J})$. Note that although ${\mathcal U}_{\zeta}({\mathfrak u}_{J})$ is not a Hopf algebra, it is an augmented algebra and the cohomology we consider here is the cohomology of the augmented (supplemented) algebra in the sense of Cartan-Eilengberg \cite[Ch X]{CE}.  Once this 
assumption is satisfied one can compute the corresponding cohomology for $u_{\zeta}({\mathfrak u}_{J})$. The proof uses the constructions presented in Section 2 and employs the 
techniques outlined in the proofs of \cite[Theorem 5.3.1, Lemma 5.4.1]{BNPP}. The proof will proceed by induction on successive quotients of
$U_{\zeta}({\mathfrak u}_{J})$ (cf. \cite[2.4]{GK}) and can be described as follows. 

Let $N^J = |\Phi^+ \backslash\Phi_J^+|$ (thus $ N_J+N^J=N$) and choose a fixed ordering of root vectors $f_1$, $f_2$, \dots, $f_{N^J}$
in ${\mathcal U}_{\zeta}({\mathfrak u}_{J})$ corresponding to the positive roots $\{\gamma_{1},\gamma_{2},\dots, \gamma_{N^J}\}$ 
in $\Phi^+\backslash\Phi_J^+$ such 
that for each $i$, the subalgebra generated by $ \langle f_1, \cdots, f_i\rangle $ is 
$ U_\zeta(\mathfrak b)$-stable through
the action of $ U_\zeta(\mathfrak g)$ 
on $\mathcal U_\zeta(\mathfrak g)$. 
Each $f_i^l$ is central in 
$ \mathcal{U}_{\zeta}(\mathfrak{g})$ and is contained in the 
augmented ideal of ${\mathcal U}_{\zeta}({\mathfrak u}_{J})$. 
Let $Z_{i}$  be the subalgebra of $ \mathcal{U}_\zeta(\mathfrak{g})$
 generated by 
 $\langle f_{1}^{l},f_{2}^{l},\dots,f_{i}^{l} \rangle$ 
 (with $i=1, \cdots, N^J$). 
 Then $ Z_i$ is a central subalgebra of 
 $ \mathcal{U}_\zeta(\mathfrak{u}_J)$ 
 and ${u}_\zeta(\mathfrak{u}_J)
 =\mathcal{U}_\zeta(\mathfrak{u}_J) /\!\!/ Z_{N^J}$.  
 We note that for each $i$, $ Z_i$ are 
 $ U_\zeta(\mathfrak b)$-stable subalgebras of
 $ \mathcal U_\zeta(\mathfrak u_J)$, but in general, $Z_i$ are not 
 $ U_\zeta(\mathfrak p_J)$-stable. Note that $Z_{N^J}$ is $U_\zeta(\mathfrak p_J)$-stable.

 For $0 \leq i \leq N^J$, 
 let $A_i = {\mathcal U}_{\zeta}({\mathfrak u}_{J})
 /\langle f_1^l,f_2^l,\dots,f_i^l\rangle$with 
$A_{N^J} = u_{\zeta}({\mathfrak u}_{J})$.  
Note that  each $ A_i$ is a
$U_\zeta(\mathfrak b)$-module algebra.
Furthermore, for $1 \leq i \leq N^J$, 
let $B_i = \langle f_i^l\rangle \subseteq A_{i-1}$ 
be the augmented subalgebra
generated by $f_i^l$.  
Each $B_i$ is a polynomial algebra in
one variable, central in $A_{i-1}$ 
and stable under $U_\zeta(\mathfrak b)$. 
Thus $B_i$ is a normal augmented subalgebra of $ A_{i-1}$ 
in the sense of \cite[XVI. \S 6]{CE}, and  $A_{i-1}/\!\!/B_i  \cong A_i$ as $U_\zeta^0$-modules.  By the discussion in Section~\ref{sec:2.4}, the ideals $\langle f_1^l,f_2^l,\dots,f_i^l\rangle$ are $u_\zeta(\mathfrak p_J)$-stable which induces an action of $ U_\zeta(\mathfrak b) u_{\zeta}(\mathfrak p_J)$  on $ A_i$. 

For $0 \leq i \leq N^J$, let $V_i$ be a vector space with basis
$\{x_1,x_2,\dots,x_i\}$ considered as a $U_{\zeta}(\mathfrak b)$-module by letting $x_i$ have weight $-\gamma_i$ contained as a $U_{\zeta}(\mathfrak b)$-submodule in  $V_N
\cong {\mathfrak u}_{J}^*$ with $V_0 = \{0\}$.   For each integrable $ U_\zeta(\mathfrak{p}_J)$-module $M$ of type 1 (in the sense of \cite{APW}), the space $\hom_{u_{\zeta}(\mathfrak{u}_J)}(\bfk , M)$ is  a rational $P_J$-module for the algebraic group $ P_J$ (the parabolic subgroup of $G$ defined and split over $\bfk$) with differential being the $\mathcal{U}(\mathfrak{p}_J)$-module arising from the Hopf algebra isomorphism  
 $\mathcal{U}(\mathfrak{p}_J)\cong U_{\zeta}(\mathfrak{p}_J)/\!\!/u_{\zeta}(\mathfrak{p}_J)$.  Any $ U_{\zeta}(\mathfrak{p}_J)$-module $M$ restricts to a $ u_{\zeta}(\mathfrak{p}_J)$-module. The module can then be inflated to a $\mathcal{U}_\zeta(\mathfrak{p}_J)$-module via the Hopf algebra homomorphism $\mathcal U_\zeta(\mathfrak p_J)\to U_\zeta(\mathfrak p_J)$. Then the module can be regarded as a $ \mathcal{U}_\zeta(\mathfrak{u}_J)$-module where  $ Z_J^+ M=0$. Let $Z_J=Z_{N^J}$ be the polynomial algebra with generators $ f_i^l$ and $ Z_J^+=\langle f_1^l, \cdots, f_{N^J}^l\rangle$ be the maximal ideal of $ Z_J$.

\subsection{} We recall the following general setting. For a given Hopf algebra $H$ and a $H$-module algebra $A$, we can consider the category of  $HA$-modules as the $A$-modules in the tensor category of $H$-modules.  The objects are $H$-modules $M$ with an $A$-module structure $ A\otimes_\bfk M\to M$, which is an $H$-module homomorphism. The morphisms are the $\bfk$-linear maps that are homomorphisms for both $A$-modules and $ H$-modules. If $ M, N$ are two $ HA$-modules, then $\hom_A(M,N)$ is an $H$-module. If $ A$ is an augmented algebra with the augmentation $A\to \bfk$ being an $ H$-module homomorphism, then $H$ acts on the cohomology groups $\operatorname{H}^k(A, M)=\on{Ext}^k_A(\bfk, M)$ for any $ HA$-module $M$. 

 For $ H=U_\zeta(\mathfrak p_J)$, $ u_{\zeta}(\mathfrak u_J)$ is an augmented $U_\zeta(\mathfrak p_J)$-module algebra. According to Lemma~\ref{2.4.1},  $\mathcal U_\zeta(\mathfrak u_J)$ is also a  $U_\zeta(\mathfrak p_J)$-module algebra and  the augmented algebra homomorphism $ \mathcal U_\zeta(\mathfrak u_J)\to u_\zeta(\mathfrak u_J)$ is also a homomorphism of $U_\zeta(\mathfrak p_J)$-module algebras. 
 We have two types of cohomology groups $ \on H^*( \mathcal U_\zeta(\mathfrak u_J), M)$ and $ \on H^*(  u_\zeta(\mathfrak u_J), M)$ for each $U_\zeta(\mathfrak p_J)$-module $M$.  Thus both $ \on H^*( \mathcal U_\zeta(\mathfrak u_J), M)$ and $ \on H^*(  u_\zeta(\mathfrak u_J), M)$ are $U_\zeta(\mathfrak p_J)$-modules, on which $u_\zeta(\mathfrak u_J)$ acts trivially. 
   But the subalgebras $A_i$ constructed above are only $U_\zeta(\mathfrak b)  $-module algebras.  By noting that $ U_\zeta (\mathfrak b)//u_{\zeta}( \mathfrak b) \cong \mathcal U(\mathfrak b)$, we know that $\on{Hom}_{u_{\zeta}({\mathfrak l}_{J})}({\bfk},\on{H}^{n}(A_i,M) \otimes Q))$ is a rational $ B$-module for any (integrable of type 1)
 $U_\zeta(\mathfrak p_J)$-modules $M$ and $Q$ as stated in the following proposition.

\begin{proposition}\label{quantumExtcalculation} Let $J\subseteq \Delta$.  Let $M$ and $Q$ be $U_{\zeta}({\mathfrak p}_{J})$-modules such that  
$Q$ is an injective $u_{\zeta}({\mathfrak l}_{J})$-module and is trivial as $U_{\zeta}({\mathfrak u}_{J})$-module. 
If there is an integer $t\geq 0$ such that there are rational $P_J$-module isomorphisms 
\begin{equation*}
\on{Hom}_{u_{\zeta}({\mathfrak l}_{J})}({\bfk},\operatorname{H}^{n}({\mathcal U}_{\zeta}({\mathfrak u}_{J}), M)\otimes Q)\cong
\begin{cases} \bfk & n=t, \\
0 & \text{otherwise},
\end{cases} 
\end{equation*} 
then  we have the following rational $P_{J}$-module isomorphisms: 
\begin{itemize} 
\item[(a)] 
\begin{equation*} 
\on{Hom}_{u_{\zeta}({\mathfrak l}_{J})}({\bfk},\on{H}^{n}(u_{\zeta}({\mathfrak u}_{J}),M) \otimes Q)) 
\cong \begin{cases} S^{\frac{n-t}{2}}({\mathfrak u}^{*}_{J})  & n\equiv t\ (\operatorname{mod} 2),  \\
0 & \text{otherwise} ;
\end{cases} 
\end{equation*} 
\item[(b)] 
\begin{equation*}
\operatorname{H}^{n}(u_{\zeta}({\mathfrak p}_{J}),M\otimes Q)\cong
\begin{cases} S^{\frac{n-t}{2}}({\mathfrak u}^{*}_{J})  & n-t\equiv 0 \  (\operatorname{mod} 2), \\
0, & \text{otherwise} .
\end{cases} 
\end{equation*} 
\end{itemize} 
\end{proposition}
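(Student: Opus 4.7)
I establish part (a) by induction on $i$ from $0$ to $N^J$ along the tower $A_0 \twoheadrightarrow A_1 \twoheadrightarrow \cdots \twoheadrightarrow A_{N^J} = u_\zeta(\mathfrak{u}_J)$, showing that at each stage
$$F^n_i \;:=\; \operatorname{Hom}_{u_\zeta(\mathfrak{l}_J)}\!\bigl(\bfk,\, \HH^n(A_i, M)\otimes Q\bigr) \;\cong\; \begin{cases} S^{(n-t)/2}(V_i^*) & \text{if } n \equiv t \pmod 2,\\ 0 & \text{otherwise.} \end{cases}$$
The base case $i=0$ is precisely the hypothesis (with $V_0 = 0$, so $S^0(V_0^*) = \bfk$). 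Throughout the induction I use triviality of $Q$ over $\mathcal U_\zeta(\mathfrak{u}_J)$ to identify $\HH^n(A_i, M\otimes Q)$ with $\HH^n(A_i, M)\otimes Q$.

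For the inductive step I apply the Lyndon--Hochschild--Serre spectral sequence attached to the normal augmented extension $B_i \hookrightarrow A_{i-1} \twoheadrightarrow A_i$, where $B_i = \bfk[f_i^l]$ is a polynomial ring on one central, $U_\zeta(\mathfrak{b})$-stable generator:
$$E_2^{p,q} = \HH^p\!\bigl(A_i,\, \HH^q(B_i, M\otimes Q)\bigr) \;\Longrightarrow\; \HH^{p+q}(A_{i-1}, M\otimes Q).$$
Since $f_i^l$ vanishes in $u_\zeta(\mathfrak{u}_J)$, it acts trivially on $M$, so the Koszul resolution of $\bfk$ over $B_i$ gives $\HH^q(B_i, M\otimes Q) = 0$ for $q>1$, with $\HH^0 \cong M\otimes Q$ and $\HH^1 \cong (M\otimes Q)[l\gamma_i]$ (the weight shift recording the weight of the degree-one class dual to $f_i^l$). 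The spectral sequence is thus two-rowed and only $d_2\colon E_2^{p,1}\to E_2^{p+2,0}$ can survive, yielding the Gysin-type long exact sequence
$$\cdots \to \HH^{n-2}(A_i, M\otimes Q)[l\gamma_i] \xrightarrow{d_2} \HH^n(A_i, M\otimes Q) \to \HH^n(A_{i-1}, M\otimes Q) \to \HH^{n-1}(A_i, M\otimes Q)[l\gamma_i] \xrightarrow{d_2} \cdots$$
where $d_2$ coincides (up to sign) with cup product by the Chern class $y_i \in \HH^2(A_i, \bfk)$ associated to the central extension.

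Next I apply $\operatorname{Hom}_{u_\zeta(\mathfrak{l}_J)}(\bfk,\, -\otimes Q)$ to this long exact sequence. Exactness is preserved because $u_\zeta(\mathfrak{l}_J)$ is a finite-dimensional self-injective Hopf algebra and $Q$ is injective over it, so $X\otimes Q$ is injective for every $u_\zeta(\mathfrak{l}_J)$-module $X$ and all higher $\operatorname{Ext}$-functors against $\bfk$ vanish. Inserting $F^n_{i-1} = S^{(n-t)/2}(V_{i-1}^*)$ from the inductive hypothesis and exploiting that the weight $l\gamma_i$ is $\mathbb Z$-linearly independent from the weights appearing in $V_{i-1}^*$, a parity argument forces $F^n_i = 0$ for $n\not\equiv t\pmod 2$. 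In the surviving parity the Gysin sequence then splits into short exact sequences
$$0 \to F^{n-2}_i[l\gamma_i] \to F^n_i \to F^n_{i-1} \to 0 \qquad (n \equiv t \pmod 2),$$
which by induction on $n$ identify $F^n_i$ with $S^{(n-t)/2}(V_i^*)$, the freshly adjoined polynomial generator being $x_i$ of weight $-\gamma_i$ corresponding (through the Frobenius twist implicit in the identification) to $y_i$.

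For part (b) I invoke the Lyndon--Hochschild--Serre spectral sequence for the normal Hopf subalgebra $u_\zeta(\mathfrak{u}_J)\hookrightarrow u_\zeta(\mathfrak{p}_J)$ with quotient $u_\zeta(\mathfrak{l}_J)$:
$$E_2^{p,q} = \HH^p\!\bigl(u_\zeta(\mathfrak{l}_J),\, \HH^q(u_\zeta(\mathfrak{u}_J), M\otimes Q)\bigr) \;\Longrightarrow\; \HH^{p+q}(u_\zeta(\mathfrak{p}_J), M\otimes Q).$$
Triviality of $Q$ over $u_\zeta(\mathfrak{u}_J)$ gives $\HH^q(u_\zeta(\mathfrak{u}_J), M\otimes Q) \cong \HH^q(u_\zeta(\mathfrak{u}_J), M)\otimes Q$, and this is again injective over $u_\zeta(\mathfrak{l}_J)$ by the self-injectivity argument above. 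Hence $E_2$ is concentrated in row $p=0$, the sequence collapses, and combining with part (a) delivers $\HH^q(u_\zeta(\mathfrak{p}_J), M\otimes Q) \cong S^{(q-t)/2}(\mathfrak{u}_J^*)$ in the claimed degrees.

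\textbf{Main obstacle.} The delicate point is the inductive step of (a): one must verify that $d_2$ is genuinely cup product with $y_i$ and that $y_i$ is a bona fide new ring-theoretic generator, not expressible in the Chern classes produced at earlier stages. This rests on the centrality, $U_\zeta(\mathfrak{b})$-equivariance, and linearly independent weights of the $f_i^l$ set up in Section~\ref{sec:3.1}, handled in the spirit of \cite[Theorem~5.3.1, Lemma~5.4.1]{BNPP} and \cite[\S 2.4]{GK}. A secondary bookkeeping issue is that the intermediate $A_i$ are only $U_\zeta(\mathfrak{b})$-module algebras, so the full $P_J$-equivariance asserted in the statement only materializes at $i=N^J$ when $A_{N^J}=u_\zeta(\mathfrak{u}_J)$ acquires its $U_\zeta(\mathfrak{p}_J)$-module-algebra structure.
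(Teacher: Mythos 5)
Your proof plan for part (a) follows the same skeleton as the paper's: induction up the filtration $A_0 \twoheadrightarrow \cdots \twoheadrightarrow A_{N^J}=u_\zeta(\mathfrak{u}_J)$, a two--row Lyndon--Hochschild--Serre spectral sequence for each central $B_i\hookrightarrow A_{i-1}\twoheadrightarrow A_i$, exactness of $\operatorname{Hom}_{u_\zeta(\mathfrak l_J)}(\bfk,-\otimes Q)$ from injectivity of $Q$, and the resulting short exact sequences identifying the cohomology degree by degree. Part (b) is also identical to the paper's LHS collapse argument. So far, so good.

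The genuine gap is in how you dispose of the ``secondary bookkeeping issue'' of $P_J$-equivariance. The statement of the proposition asserts \emph{rational $P_J$-module isomorphisms}, but your induction is carried out through the intermediate algebras $A_i$ with $0<i<N^J$, which are only $U_\zeta(\mathfrak b)$-stable and not $U_\zeta(\mathfrak p_J)$-stable. Consequently, the isomorphism
\[
\operatorname{Hom}_{u_\zeta(\mathfrak l_J)}\bigl(\bfk,\HH^n(u_\zeta(\mathfrak u_J),M)\otimes Q\bigr)\;\cong\;S^{(n-t)/2}(\mathfrak u_J^*)
\]
that you obtain at the top of the tower is \emph{a priori only a $B$-module isomorphism}: it was assembled from a chain of maps that are equivariant only for $U_\zeta(\mathfrak b)$. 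Your remark that ``the full $P_J$-equivariance \ldots only materializes at $i=N^J$ when $A_{N^J}=u_\zeta(\mathfrak u_J)$ acquires its $U_\zeta(\mathfrak p_J)$-module-algebra structure'' asserts the conclusion without proof. The fact that both sides of the isomorphism happen to carry $P_J$-module structures does not imply the specific isomorphism you built is $P_J$-equivariant, nor does it by itself imply that \emph{some} $P_J$-isomorphism exists. The paper closes this gap in two moves: first, it runs a second LHS spectral sequence for $Z_J\hookrightarrow\mathcal U_\zeta(\mathfrak u_J)$ with quotient $u_\zeta(\mathfrak u_J)$ (all three $U_\zeta(\mathfrak p_J)$-stable) to show that $\mathcal G(\HH^a(u_\zeta(\mathfrak u_J),M))$ is a rational $P_J$-module; second, and crucially, it invokes Proposition~\ref{module-isomorphism} of the appendix, which says that if two objects of $\mathcal C(H)$ are isomorphic after restriction to a Hopf subalgebra $D$ with $\operatorname{ind}_D^H(\bfk)\cong\bfk$, then they are $H$-isomorphic. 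Applied with $H=P_J$, $D=B$ (so $\operatorname{ind}_B^{P_J}\bfk\cong\bfk$ because $P_J/B$ is complete), this upgrades the $B$-isomorphism to a $P_J$-isomorphism. Without some argument of this kind, your proof only establishes the weaker $B$-module version of (a), which is not enough for the downstream applications (e.g.\ Theorem~\ref{thm:quantumrealization} applies $R^\bullet\operatorname{ind}_{P_J}^G$ to these objects, which genuinely needs $P_J$-equivariance).

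Two smaller points: your weight bookkeeping writes $\HH^1(B_i,\bfk)$ with a shift by $l\gamma_i$, whereas the paper records it as $\bfk_{\gamma_i}$ (the untwisted $B$-weight via Frobenius); and you write $S^{(n-t)/2}(V_i^*)$ where the paper has $S^{(n-t)/2}(V_i)$ with $V_{N^J}\cong\mathfrak u_J^*$. Neither affects the logic, but be consistent about whether you are tracking $U_\zeta^0$-weights or their Frobenius descents, and make the definition of $V_i$ match the claimed endpoint $\mathfrak u_J^*$.
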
 

\begin{proof} (a) Note that each  $U_{\zeta}(\mathfrak{p}_J)$-module $M$ 
becomes  a $ \mathcal{U}_\zeta(\mathfrak{u}_J)$-module satisfying $Z_JM=0$. 
 By Proposition~\ref{2.4.1}, $ \mathcal{U}_\zeta(\mathfrak{u}_J)$ is a 
 right $  U_\zeta(\mathfrak{p}_J)$-module algebra and the left 
 $ \mathcal{U}_\zeta(\mathfrak{u}_J)$-module structure on $M$ is compatible 
 with the left $  U_\zeta(\mathfrak{p}_J)$-module structure on $M$ by Section~\ref{sec:3.1}. Hence, the space $\hom_{\mathcal{U}_\zeta(\mathfrak{u}_J)}(\bfk, M) $ is 
 a $U_\zeta(\mathfrak{l}_J)$-module (and a $\mathcal{U}(\mathfrak{u}_J)$-module).  
 In particular, $\on{H}^i(\mathcal{U}_\zeta(\mathfrak{u}_J), M) $ is a 
 $ U_\zeta(\mathfrak{p}_J)$-module such that $ u_{\zeta}(\mathfrak{u}_J)$ 
 acts trivially by Proposition~\ref{triviality}.

Set  ${\mathcal G}(-) =\hom_{u_{\zeta}({\mathfrak l}_{J})}({\bfk},-\otimes Q)$.  
For each $ U_{\zeta}(\mathfrak{p}_J)$-module $E$ (integrable of type 1) on which 
$ u_{\zeta}(\mathfrak{u}_J)$-acts trivially, $\mathcal{G}(E)$ is a rational 
$ P_J$-module. Thus $ \mathcal{G}$ is a functor from the full subcategory of 
$U_{\zeta}(\mathfrak{p}_J)$-modules on which $u_{\zeta}(\mathfrak{u}_J)$ acts 
trivially to the category of rational $ P_J$-modules. 
Since the module $Q$ is injective as a $u_{\zeta}({\mathfrak l}_{J})$-module and 
$E\otimes Q$ is  injective for any $u_\zeta(\mathfrak{l}_J)$-module $E$, the functor
${\mathcal G}$ is an exact functor. However, the Hopf algebra $ U_\zeta(\mathfrak p_J)$ does not necessarily act on $A_i$, thus we cannot assume that $\on{H}^s(A_i, M)$ are $ U_\zeta(\mathfrak p_J)$-modules. 
Thus we let $ \mathcal G'(-)$ be the restriction of $ \mathcal G(-)$ to the category of (integrable of type 1) $ U_\zeta(\mathfrak b) $-modules to the category of rational  $B$-modules. 

 We first note that for each $ 0 \leq i \leq N^J$, each $\on{H}^s(A_i, M)$ is a 
$ U_\zeta (\mathfrak b)$-module (cf. \ref{sec:2.4}) which is $ u_\zeta(\mathfrak{u}_J)$-trivial. 
Thus we can apply the functor $ \mathcal{G'}(-)$ to get a $ B$-module 
${\mathcal G'}(\text{H}^{s}(A_i, M))$. 
 
We will first prove by induction on $i$ that for $0 \leq i \leq N^J$ as 
$\mathcal{U}(\mathfrak{b})$-modules:
\begin{equation}\label{eq:3.1.1}
{\mathcal G'}(\text{H}^{s}(A_i, M)) \cong
\begin{cases}
S^{r}(V_i) &\text{ if }s = 2r+t,\\
0 &\text{ otherwise}.
\end{cases}
\end{equation}
 For $i=0$ this follows 
from the hypothesis. The statement of this proposition (as $B$-modules) is 
the case when $i=N^J$. 

Assume that \eqref{eq:3.1.1} holds for $i - 1$. We will prove that it is also valid for 
$i$.  By using the PBW basis Lusztig constructed for $ \BU_q(\mathfrak{u}_{\empty})$, one can construct a PBW basis for $ \mathcal{U}_\zeta(\mathfrak{u}_{J})$ by removing the denominators in the divided powers.  Then by using  induction on $i$, one can show that 
$ A_{i-1}$ is a projective $B_i$-module (see Section~\ref{sec:3.1} for the definition of $B_i$). Now we can consider the Lyndon-Hochschild-Serre spectral sequence \cite[Thm. 6.1]{CE}: 
$$
E_{2}^{a,b} = \text{H}^a(A_i,\text{H}^b(B_i,M)) \Rightarrow
\text{H}^{a + b}(A_{i-1},M).
$$
Since $Z_{J}^+.M=0$, then $B_i$ acts on $M$ trivially and $ \on{H}^*(B_i, M)=M\otimes  \on{H}^*(B_i, \bfk)$ by Proposition~\ref{tensor-identity}. The algebra $A_{i}$ acts trivially on $B_{i}$ via 
the adjoint action induced from the right adjoint action of the Hopf algebra 
$\mathcal{U}_\zeta(\mathfrak{p}_J)$. In particular, $ u_\zeta({\mathfrak u}_{J})$ acts on $ B_i$ trivially. 
Thus $A_i=\mathcal{U}_\zeta(\mathfrak{u}_J)/\!\!/ Z_i$ 
acts trivially on $ \on{H}^*(B_i, \BC)$ using the argument following 
Proposition~\ref{tensor-identity}.  The $A_i$-action on  $ \on{H}^*(B_i, M)=M\otimes  \on{H}^*(B_i, \BC)$ is via the action on $M$. Thus, the spectral sequence can be rewritten as 
\begin{equation}\label{eq:3.1.2}
E_{2}^{a,b} = \operatorname{H}^a(A_i,M)\otimes\operatorname{H}^b(B_i,{\bfk})
\Rightarrow \operatorname{H}^{a + b}(A_{i-1},M).
\end{equation}

The cohomology of $B_{i}$ is an exterior algebra with one generator in degree 1 and,  as 
$U_\zeta(\mathfrak{b})$-modules, we have
$$
\operatorname{H}^b(B_i,{\bfk}) =
\begin{cases}
{\bfk} &\text{ if } b = 0,\\
\bfk_{\gamma_{i}} &\text { if } b = 1,\\
0 &\text{ otherwise}. 
\end{cases}
$$

Since the functor ${\mathcal G'}(-)$ is exact, \delete{\red{can this functor be applied here? Need $ u_\zeta (l_J)$-module structure}} we can apply this functor to the above spectral sequence and obtain a new spectral
sequence using the fact that  $u_{\zeta}({\mathfrak l}_{J})$-action on the cohomology   of 
$B_{i}$ is trivial to get the following spectral sequence:
$$
E_{2}^{a,b} = {\mathcal G'}(\operatorname{H}^a(A_i, M)) \otimes\operatorname{H}^b(B_i,{\bfk}) \Rightarrow
    {\mathcal G'} (\operatorname{H}^{a+b}(A_{i-1},M)). 
$$
The spectral sequence consists of at most  two non-zero rows (i.e., when $b=0,1$). 
Therefore, the spectral sequence collapses after the $E_{3}$-page and the abutment can be obtained by calculating the 
differential $d_{2}$. Thus, we have a short exact sequence 
\begin{equation} \label{abutment} 0 \rightarrow E_3^{u-1,1}\rightarrow  \mathcal{G'}(\on{H}^u(A_{i-1},M)) \rightarrow E_3^{u,0} \rightarrow 0
\end{equation}
for all $ u$. On the other hand, we have 
\begin{equation} \label{E3-page} E_3^{u,1}=\ker(d_2: E_2^{u,1}\rightarrow E_2^{u+2,0}), \qquad E_3^{u,0}=\on{coker}(d_2: E_2^{u-2,1}\rightarrow E_2^{u,0}). 
\end{equation}
 Observe that from the induction hypothesis the abutment is 
\begin{equation}\label{assume:i-1}
{\mathcal G'}(\operatorname{H}^{a + b}(A_{i-1},M)) \cong
\begin{cases}
S^{r}(V_{i-1}), &\text{ if } a + b = 2r+t ,\\
0, &\text{ otherwise}.
\end{cases}
\end{equation}
 The first row $E_2^{a,0} = {\mathcal G'}(\operatorname{H}^a(A_i, M))$ is 
what we are trying to determine. Note also that 
$E_2^{a,1} \cong E_{2}^{a,0}\otimes \BC\gamma_{i}$. 

We observe that $E_{2}^{u,0}\cong E_{2}^{u,1}=0$ for $u<t$. Note that 
${\mathcal G'} (\operatorname{H}^{u}(A_{i-1},M))=0$ for $u<t$. 
Therefore, $E_{3}^{u,0}=E_{\infty}^{u,0}=0$ and $E_{3}^{u,1}=E_{\infty}^{u,1}=0$ for $u+1<t$.
Thus, for $u<t$, $d_n^{u-2,1}$ is an isomorphism by \eqref{E3-page} and we have
$$E_{2}^{u,0}\stackrel{d_2}{\cong} E_{2}^{u-2,1}\cong E_{2}^{u-2,0}\otimes \BC \gamma_i$$  
One can conclude inductively that $E_{2}^{u,0}=0$ for $u< t$. 

When $a-t $ is odd, then by \eqref{assume:i-1} and \eqref{abutment}, we have
$E_3^{a,0}=0=E_3^{a-1,1}$ and that $ d_2^{a-2,1}$ is surjective and $ d_2^{a-1,1}$ is injective.  
If $ a=t+1$, the surjectivity of $d_2^{t-1, 1}$ and the fact that $ E_2^{t-1, 1}=0$ implies that $ E_2^{t+1, 0}=0$, which further implies $ E_2^{t+1, 1}=0$.  

We now assume that we have proved that $ E_2^{a, 0}=0$  for an $a$ 
with $ a-t$ odd. Then $ E_2^{a,1}=0$. 
Since $\mathcal{G'}(\on{H}^{a+2}(A_{i-1}, M))=0$, 
we have $E_3^{a+2,0}=0$. Similarly, the surjectivity 
of $d_2^{a,1}: E_2^{a,1}\rightarrow E_{2}^{a+2,0}$
 implies that $ E_2^{a+2,0}=0$. Therefore,  
 by induction on $a$, we conclude that
  $ E_2^{a,0}=0=E_2^{a,1}$ whenever  $a-t$ is odd.  In particular we have $ E_3^{a, 1}=0$ for $a$ with $ a-t$ odd. Thus we have $ E_3^{a, 1}=0 $ for all $a$ by  \eqref{abutment} and \eqref{assume:i-1}. Consequently, we have $ E_3^{a,0}=\mathcal{G'}(\on{H}^a(A_{i-1}, M))$ for all $a$. 

Next we analyze the case when $a-t$ is even. First observe that 
 $d_2: E_2^{a,1} \to E_2^{a + 2,0}$ is a monomorphism, and 
for $a-t$ even, 
\[
E_2^{a,0}/E_2^{a - 2,1} =E_3^{a,0}=  {\mathcal G'}(\operatorname{H}^{a}(A_{i-1},M))\]
This yields a short exact sequence of $U_{\zeta}(\mathfrak b)$-modules:
\begin{equation} \label{symmetricalgses}
0 \to {\mathcal G'} (\operatorname{H}^{a - 2}(A_i, M))\otimes (\gamma_i) \to {\mathcal
G'}(\operatorname{H}^{a}(A_i,M)) \to
S^{(a-t)/2}(V_{i-1}) \to 0.
\end{equation}

We can now use induction on $a-t$ (even) to determine 
${\mathcal G'}(\operatorname{H}^{a-t}(A_i,M))$.
For even $a-t$, assume that
$$
{\mathcal G'}(\operatorname{H}^{a - 2}(A_i, M)) =
S^{(a-t - 2)/2}(V_i).
$$
Then the short exact sequence ~\eqref{symmetricalgses} can be written as 
$$
0 \to S^{(a-t - 2)/2}(V_i)\otimes (\gamma_{i}) \to
{\mathcal G'}(\operatorname{H}^{a}(A_i, M)) \to
S^{(a-t)/2}(V_{i-1})\to 0.
$$
Now set $a = 2r+t$, thus we have isomorphism of  $U_{\zeta}(\mathfrak b)$-modules
\begin{equation}\label{eq:3.2.7}
{\mathcal G'}(\operatorname{H}^{a}(A_i, M)) \cong
(S^{r-1}(V_i)\otimes \bfk\gamma_i)\oplus
S^{r}(V_{i-1}) \cong S^{r}(V_i).
\end{equation} 
Note that the above isomorphism is as $B$-modules. We thus have proved \eqref{eq:3.1.1}, and therefore  statement (a) with the isomorphism being as $ B$-modules. 

Next we need to verify that the identifications given in the statement (a) holds as ${\mathcal U}({\mathfrak p}_{J})$-modules. Recall $Z_{J}$ is a central subalgebra of $ \mathcal{U}_\zeta(\mathfrak{u}_J)$ which is  $ U_\zeta(\mathfrak{p}_J)$-stable under the right adjoint action on $ \mathcal{U}_\zeta(\mathfrak{u}_J)$ (cf. \cite[Cor. 2.7.4]{BNPP}). 
We have 
a spectral sequence of $U_{\zeta}({\mathfrak p}_{J})$-modules: 
$$
E_2^{a,b} = \operatorname{H}^a(u_{\zeta}({\mathfrak u}_{J}),\operatorname{H}^b(Z_J,M))
\Rightarrow \operatorname{H}^{a + b}(\mathcal{U}_{\zeta}({\mathfrak u}_{J}),M)
$$
The algebra $u_{\zeta}({\mathfrak p}_{J})$-action (the restriction 
from the $U_\zeta(\mathfrak{p}_J)$-action which is the same as that  factored through the adjoint action of $ \mathcal U_\zeta (\mathfrak p_J)$)  on $Z_J$ is trivial on since it $Z_J$ is central in $ \mathcal U_\zeta(\mathfrak g)$. Thus  $u_{\zeta}({\mathfrak p}_{J})$ also acts trivially on $\on{H}^b(Z_J,{\bfk})$.  
Also $Z_{J}$ (as augmented algebra) acts trivially on 
 the $ U_\zeta(\mathfrak{p}_J)$-module $M$. Therefore, the above spectral sequence becomes
$$
E_2^{a,b} = \operatorname{H}^a(u_{\zeta}({\mathfrak u}_{J}),M)\otimes \operatorname{H}^b(Z_J,{\bfk}) \Rightarrow
 \operatorname{H}^{a +b}(\mathcal{U}_{\zeta}({\mathfrak u}_{J}),M).
$$
The restriction of the $U_\zeta(\mathfrak{p}_J)$-action to the $u_\zeta(\mathfrak{u}_J)$-action is trivial on each term of the spectral sequence by Proposition~\ref{tensor-identity}. Now we apply the functor ${\mathcal G}(-)$ to obtain a new
spectral sequence of $U_{\zeta}({\mathfrak p}_{J})$-modules:
\begin{equation}\label{ZJ-spec-seq}
E_2^{a,b} = {\mathcal G}(\operatorname{H}^a(u_{\zeta}({\mathfrak u}_{J}),M)) \otimes \operatorname{H}^b(Z_J,{\mathbb C}) \Rightarrow
\mathcal{G}(\operatorname{H}^{a + b}(\mathcal{U}_{\zeta}({\mathfrak u}_{J}),M)).
\end{equation}
Observe that $U_{\zeta}({\mathfrak p}_{J})$ acts on the terms in the spectral sequence with both $u_{\zeta}(\mathfrak{l}_J)$ and $ u_\zeta(\mathfrak{u}_J)$ acting trivially on each term.  Thus 
this is a spectral sequence of $\mathcal U({\mathfrak p}_{J})$-modules. Since we are working in the category of  integrable $U_\zeta(\mathfrak p_J)$-modules of type $ \mathsf 1$, the terms of the spectral sequence are rational $P_J$-modules as well.   By the assumption,  the abutment $\mathcal{G}(\operatorname{H}^{a + b}(\mathcal{U}_{\zeta}({\mathfrak u}_{J}),M))$ of this spectral sequence is nonzero only when 
$a+ b = t$ in which case it is the trivial module $\bfk$. Moreover, by \eqref{eq:3.2.7}, as ${\mathcal U}({\mathfrak b})$-modules, 
\begin{equation}\label{u-cohomology}
{\mathcal G}(\operatorname{H}^a(u_{\zeta}({\mathfrak u}_{J}),M)) ={\mathcal G'}(\operatorname{H}^a(u_{\zeta}({\mathfrak u}_{J}),M)) \cong S^\frac{a-t}{2}({\mathfrak u}_{J}^*).
\end{equation}

We note that both are $\mathcal U(\mathfrak p_J)$-modules. They are actually rational $P_J$-modules. The isomorphism is as $ B$-modules. Proposition~\ref{module-isomorphism} implies that ${\mathcal G}(\operatorname{H}^a(u_{\zeta}({\mathfrak u}_{J}),M))\cong  S^\frac{a-t}{2}({\mathfrak u}_{J}^*)$ as $P_J$-modules. 
\delete{ Thus $ {\mathcal G}(\operatorname{H}^a(u_{\zeta}({\mathfrak u}_{J}),M)) =0$ 
unless $ a-t\in 2 \mathbb N$.  By noting that $\deg(d_r)=(r, -r+1)$,  this proves that  $d_r=0$ for all $ r\geq 3 $ odd  and thus $ E_{r}=E_{r+1}$.  We have to prove that \eqref{u-cohomology} is an isomorphism of $ \mathcal U (\mathfrak p_J)$-module isomorphism. This is the case when $ a=t$.  

According to \cite[Cor. 2.9.6]{ABG}, and the fact that $ Z_J$ is a plynomial algebra, as ${\mathcal U}(\mathfrak{b})$-modules, 
\begin{align}\label{ZJ-cohomology}\operatorname{H}^{\bullet}(Z_J,{\mathbb C}) \cong \Lambda^{\bullet}({\mathfrak u}_{J}^*)
\end{align}
where $\Lambda^{\bullet}({\mathfrak u}_{J}^*)$ is the exterior algebra on ${\mathfrak u}_{J}^{*}$ and the 
action of ${\mathcal U}({\mathfrak b})$ is given by the coadjoint action. Note that this isomorphism naturally extends to 
an isomorphism of ${\mathcal U}({\mathfrak p}_{J})$-modules 
by \cite[Cor 2.7.4(iii)]{BNPP}.  Hence the $E_2$-page of the spectral sequence \eqref{ZJ-spec-seq} has only possible non-zero rows for $0\leq b\leq N^J=\dim \mathfrak u_J$ and thus collapses after $E_{N^J+1}$-page.  In the following we use this isomorphism of $ \mathcal U(\mathfrak p_J)$-modules and differentials of $ \mathcal U(\mathfrak p_J)$-modules on the $E_2$-page inductively, starting from $ a=t$ to prove that \eqref{u-cohomology} is an isomorphism of $ \mathcal U(\mathfrak p_J)$-modules. 

\delete{ It follows from \eqref{u-cohomology} that $E_2^{a,b} = 0$ for all $a$ with $a-t$ odd and $E_{2}^{a,b}=0$ for all $a<t$. Thus the we have $ E_r^{a, b}=0 $ for all $ a $ with $ a-t$ odd and  and $E_{r}^{a,b}=0$ for all $a<t$. In particular $ E_3^{t,11}=E_\infty^{t,1}$ and $ E^{t+2,0}_3=E_\infty^{t+2,0}$. } By assumption,
${\mathcal G}(\operatorname{H}^{n}(\mathcal{U}_{\zeta}({\mathfrak u}_{J}),M))=0$ if $n\neq t$, we have 
\begin{align}\label{E3-vanishing}
 E_r^{a,b}=E_\infty^{a,b}=0 \text{  if } a+b\neq t,  \text{ and } a-t<r \text { or }  b< r-1 .
 \end{align}
  In particular $ E_3^{t,1}=E_\infty^{t,1}=0$ and $ E^{t+2,0}_3=E^{t+2,0}_\infty=0$ and $d_2^{t,1}:E_{2}^{t,1}\to E_{2}^{t+2,0}$ is an isomorphism of $\mathcal U_\zeta(\mathfrak p_J)$-modules. Thus 
$$
E_2^{t. 1}=\bfk \otimes \mathfrak u_j^*\cong E_2^{t+2,0} ={\mathcal G}(\operatorname{H}^{t+2}(u_{\zeta}({\mathfrak u}_{J}),M)) \cong {\mathfrak u}_{J}^*\otimes \bfk 
$$
is an isomorphism of  ${\mathcal U}({\mathfrak p}_{J})$-modules (via the coadjoint action of $P_J$ on $\mathfrak u_J^*$).  Thus \eqref{u-cohomology} is an isomorphism of ${\mathcal U}({\mathfrak p}_{J})$-modules for $a=t+2$. Therefore we get the ${\mathcal U}({\mathfrak p}_{J})$-structure of the terms
$$E_2^{t+2,b} = {\mathcal G}(\operatorname{H}^{t+2}(u_{\zeta}({\mathfrak u}_{J}),M))\otimes
\operatorname{H}^b(Z_J,{\mathbb C}) \cong ({\mathfrak u}_{J}^*)\otimes\Lambda^b({\mathfrak u}_{J}^*)$$ 
is given by the coadjoint action of $P_J$ for all $b\geq 0$. 

Since $ E_3^{t+2, 1}=E_4^{t+2, 1}=E_\infty^{t+2, 1}=0$,  and $E_3^{t,2}=E_4^{t,2}=E_\infty^{t,2}=0$, we have exact sequence of ${\mathcal U}({\mathfrak p}_{J})$-modules 
\[   0 \rightarrow E_{2}^{t, 2}\rightarrow E_{2}^{t+2, 1}\rightarrow E_2^{t+4, 0}\rightarrow E_4^{t+4, 0}\to 0.\]
Since  $ E_5^{t+4,0}=E_\infty^{t+4,0}=0$ and $E_5^{t,3}=E_\infty^{t,3}=0$, we have 
\[E_3^{t+4,0}= E_4^{t+4,0}\cong E_4^{t,3}=E_3^{t,3}=\ker(d_2: E_2^{t,3}\rightarrow E_2^{t+2,2}).\] 
Thus 
the ${\mathcal U}({\mathfrak p}_{J})$-structure of
$$
E_2^{t+4,0} = {\mathcal G}( \operatorname{H}^{t+4}(u_{\zeta}({\mathfrak u}_{J}),M)) \cong S^2({\mathfrak u}_{J}^*)
$$
(with isomorphism as a ${\mathcal U}({\mathfrak h})$-module), is determined by
the structure on $E_2^{t,b}$ and $E_2^{t+2,b}$, must also be determined by the $P_J$-coadjoint action. 

In general, by \eqref{E3-vanishing}, for any $ r>0$ we have the exact sequence of 
\[ 0\to E_2^{t, r}\to E_{2}^{t+2. r-1}\to \cdots \to E_2^{t+2r, 0}\to 0
\]
of ${\mathcal U}({\mathfrak p}_{J})$-modules with $ E_2^{t+2r, 0}=S^r(\mathfrak u_J^*)$. Thus by induction
we  conclude that the isomorphism 
$${\mathcal G}(\operatorname{H}^{a}(u_{\zeta}({\mathfrak u}_{J}),M)) \cong S^\frac{a-t}{2}({\mathfrak u}_{J}^*)^{[1]}$$
is given by the coadjoint action of $U({\mathfrak p}_{J})$.}

(b) Apply the Lyndon-Hochschild-Serre (LHS) spectral sequence 
to $u_{\zeta}({\mathfrak u}_{J})$ as a normal Hopf subalgebra in $u_{\zeta}({\mathfrak p}_{J})$:
\begin{equation*} 
E_{2}^{i,j}=\operatorname{H}^{i}(u_{\zeta}({\mathfrak l}_{J}),
\operatorname{H}^{j}(u_{\zeta}({\mathfrak u}_{J}),M)\otimes Q)\Rightarrow 
\operatorname{H}^{i+j}(u_{\zeta}({\mathfrak p}_{J}),M\otimes Q).
\end{equation*} 
Since $Q$ is injective over $u_{\zeta}({\mathfrak l}_{J})$ (thus tensor product with any $u_{\zeta}({\mathfrak l}_{J})$-module remains injective),  this spectral sequence collapses and yields: 
\begin{equation*} 
\text{H}^{n}(u_{\zeta}({\mathfrak p}_{J}),M\otimes Q)=\text{Hom}_{u_{\zeta}({\mathfrak l}_{J})}
(\bfk,\operatorname{H}^{n}(u_{\zeta}({\mathfrak u}_{J}),M)\otimes Q). 
\end{equation*}   
The result now follows by part (a) .
\end{proof}

\subsection{} In the next theorem we identify important axiomatic conditions which allow one to realize a $G$-module isomorphism between 
$\text{ind}_{P_{J}}^{G} (S^{\bullet}({\mathfrak u}_{J}^{*})\otimes \bfk_\gamma)$ and cohomology for modules over the small quantum group in order to 
answer Question~\ref{realizequestion2}. 

\begin{theorem}\label{thm:quantumrealization} Let $J\subseteq \Delta$, $M$ be a $U_{\zeta}({\mathfrak g})$-module with $Z_{J}^+.M=0$ and $Q$ be a $U_{\zeta}({\mathfrak p}_{J})$-module such that 
$Q$ is  injective as $u_{\zeta}({\mathfrak l}_{J})$-module and  trivial as $U_{\zeta}({\mathfrak u}_{J})$-module. Assume that the following 
two conditions hold: 
\begin{itemize} 
\item[(i)] As a rational $B$-module, and for some $t\geq 0$,  
\begin{equation}
\operatorname{Hom}_{u_{\zeta}({\mathfrak l}_{J})}({\bfk},\operatorname{H}^{n}({\mathcal U}_{\zeta}({\mathfrak u}_{J}), M)\otimes Q)\cong
\begin{cases} \bfk & n=t, \\
0 & \text{otherwise.} 
\end{cases} 
\end{equation} 
\item[(ii)] For $\gamma\in X_{P_{J}}\cap X^{+}$ we have 
$R^{i}\operatorname{ind}_{U_{\zeta}({\mathfrak p}_{J})}^{U_{\zeta}({\mathfrak g})} (Q \otimes \bfk_{\ell\gamma}) =0$ for $i>0$. 
\end{itemize} 
Then there exists an isomorphism of rational $G$-modules:
\begin{equation}
\operatorname{H}^{n}(u_{\zeta}({\mathfrak g}),M\otimes \operatorname{ind}_{U_{\zeta}({\mathfrak p}_{J})}^{U_{\zeta}({\mathfrak g})} (Q\otimes \bfk_{\ell\gamma}))\cong
\begin{cases} \operatorname{ind}_{P_{J}}^{G} (S^{\frac{n-t}{2}}({\mathfrak u}^{*}_{J}) \otimes \bfk_{\gamma}) & n-t \equiv 0 \ (\operatorname{mod }2), \\
0 & \text{otherwise}. 
\end{cases} 
\end{equation} 
\end{theorem}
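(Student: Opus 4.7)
\emph{Proof plan.} The strategy is to transport the $u_\zeta(\mathfrak p_J)$-cohomology computation of Proposition~\ref{quantumExtcalculation}(b) to a $u_\zeta(\mathfrak g)$-cohomology computation via the quantum induction functor $\on{ind}:=\on{ind}_{U_\zeta(\mathfrak p_J)}^{U_\zeta(\mathfrak g)}$. Set $V=M\otimes Q\otimes \bfk_{\ell\gamma}$. By the generalized tensor identity, $M\otimes \on{ind}(Q\otimes \bfk_{\ell\gamma})\cong \on{ind}(V)$ as $U_\zeta(\mathfrak g)$-modules, and the analogous identity for higher derived functors together with hypothesis (ii) yields $R^i\on{ind}(V)\cong M\otimes R^i\on{ind}(Q\otimes \bfk_{\ell\gamma})=0$ for $i>0$. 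The problem thus reduces to computing $\on{H}^n(u_\zeta(\mathfrak g), \on{ind}(V))$.

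I would next establish the functor identity
\[
\on{Hom}_{u_\zeta(\mathfrak g)}(\bfk,\on{ind}(-))\;\cong\;\on{ind}_{P_J}^G\circ \on{Hom}_{u_\zeta(\mathfrak p_J)}(\bfk,-)
\]
on $U_\zeta(\mathfrak p_J)$-modules, landing in rational $G$-modules. This rests on the normality of $u_\zeta(\mathfrak g)$ in $U_\zeta(\mathfrak g)$ together with $u_\zeta(\mathfrak g)\cap U_\zeta(\mathfrak p_J)=u_\zeta(\mathfrak p_J)$, so that invariants pass through the induction adjunction; it is the quantum analogue of the familiar ``$G_1$-invariants commute with parabolic induction'' fact for Frobenius kernels. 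The two associated Grothendieck spectral sequences share a common target $R^\bullet(\on{Hom}_{u_\zeta(\mathfrak g)}(\bfk,\on{ind}(-)))(V)$. The one built from taking $\on{ind}$ first,
\[
E_2^{i,j}=\on{H}^i(u_\zeta(\mathfrak g),R^j\on{ind}(V))\;\Longrightarrow\;R^{i+j},
\]
collapses by the first paragraph and identifies the common target with the desired $\on{H}^n(u_\zeta(\mathfrak g),M\otimes \on{ind}(Q\otimes \bfk_{\ell\gamma}))$.

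The other Grothendieck spectral sequence reads
\[
E_2^{i,j}=R^i\on{ind}_{P_J}^G\on{H}^j(u_\zeta(\mathfrak p_J),V)\;\Longrightarrow\;R^{i+j},
\]
and its $E_2$-column for $i=0$ is computed by Proposition~\ref{quantumExtcalculation}(b): since $\bfk_{\ell\gamma}$ is $u_\zeta(\mathfrak p_J)$-trivial and descends via the quantum Frobenius to the $P_J$-character $\bfk_\gamma$, we obtain $\on{H}^j(u_\zeta(\mathfrak p_J),V)\cong S^{(j-t)/2}(\mathfrak u_J^*)\otimes \bfk_\gamma$ when $j\equiv t \pmod{2}$ and zero otherwise. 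To close the argument I would invoke the characteristic-zero Kempf-type vanishing
\[
R^i\on{ind}_{P_J}^G\bigl(S^r(\mathfrak u_J^*)\otimes \bfk_\gamma\bigr)=0,\qquad i>0,\;\gamma\in X_{P_J}\cap X^+,
\]
equivalently $H^i(G/P_J, S^r(T^*(G/P_J))\otimes \mathcal L_\gamma)=0$; this degenerates the second spectral sequence at $E_2$ with only the $i=0$ column surviving, yielding $\on{ind}_{P_J}^G(S^{(n-t)/2}(\mathfrak u_J^*)\otimes \bfk_\gamma)$ as the claimed $G$-module.

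I expect the main obstacle to be the rigorous justification of the functor identity in the second paragraph: its formulation for the small-versus-large quantum group pair is delicate, and the natural intermediate identifications often appear only as $U_\zeta(\mathfrak b)$-module isomorphisms. Promoting them to $U_\zeta(\mathfrak p_J)$-equivariant (hence $P_J$-rational) isomorphisms is precisely where the ``parabolic-from-Borel equivariance'' principle flagged in the introduction, and proved in the appendix using the generalized tensor identity and completeness of the flag variety, becomes essential.
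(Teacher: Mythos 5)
Your proposal follows essentially the same route as the paper's proof: the same pair of naturally isomorphic functors (the paper's $\mathcal{F}_1\cong\mathcal{F}_2$), the same two Grothendieck spectral sequences converging to the common abutment, the first collapsing via the tensor identity and hypothesis (ii), and the second computed by Proposition~\ref{quantumExtcalculation}(b) together with the higher vanishing of $R^{i}\on{ind}_{P_J}^{G}(S^{\bullet}(\mathfrak{u}_J^*)\otimes\bfk_\gamma)$. Two minor remarks: the paper attributes that last vanishing to Grauert--Riemenschneider rather than to a ``Kempf-type'' theorem, and the $B$-to-$P_J$ equivariance subtlety you flag is actually absorbed one step earlier, inside the proof of Proposition~\ref{quantumExtcalculation}, rather than in this theorem's own argument.
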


\begin{proof} We first consider the functors (cf. \cite[I 6.12]{Jan1})
\[
{\mathcal F}_{1}(-)=\text{Hom}_{u_{\zeta}({\mathfrak g})}({\bfk},\text{ind}_{U_{\zeta} ({\mathfrak p}_{J})}^{U_{\zeta}({\mathfrak
g})}(-)) \text{ and } {\mathcal F}_{2}(-)=\text{ind}_{P_{J}}^{G}
\text{Hom}_{u_{\zeta}({\mathfrak p}_{J})}({\bfk},-)
\]
with ${\mathcal F}_1, {\mathcal F}_2$ : $U_{\zeta}({\mathfrak
p}_J)$-mod $\to$ $U_{\zeta}({\mathfrak g})$-mod (or $G$-mod). Observe that we are 
using the Frobenius map and the following identification of functors:
$$\text{ind}_{P_{J}}^{G}(-)\cong
\text{ind}_{U_{\zeta}({\mathfrak p}_{J})/\!\!/u_{\zeta}({\mathfrak
p}_{J})}^{U_{\zeta}({\mathfrak g})/\!\!/u_{\zeta}({\mathfrak g})}(-).
$$

Set $D=M\otimes Q\otimes \bfk_{\ell\gamma}$.
The functors ${\mathcal F}_1$ and ${\mathcal F}_2$ are naturally
isomorphic and there exist two spectral sequences:
\begin{equation*}\label{firstcohoeqn1}
{}'\!E_{2}^{i,j}=\HH^{i}(u_{\zeta}({\mathfrak
g}),R^{j}\text{ind}_{U_{\zeta} ({\mathfrak
p}_{J})}^{U_{\zeta}({\mathfrak g})}D)\Rightarrow
(R^{i+j}{\mathcal F}_{1})(D) \quad \text{ and }
\end{equation*}
\begin{equation}\label{firstcohoeqn2}
E_{2}^{i,j}=R^{i}\text{ind}_{P_{J}}^{G}
\HH^{j}(u_{\zeta}({\mathfrak p}_{J}),D) \Rightarrow (R^{i+j}{\mathcal
F}_{2})(D)
\end{equation}
which converge to the same abutment. However, by the tensor identity and condition (ii), 
\begin{equation*}
R^{i}\text{ind}_{U_{\zeta} ({\mathfrak p}_{J})}^{U_{\zeta}({\mathfrak g})} D \cong M\otimes 
R^{i}\text{ind}_{U_{\zeta} ({\mathfrak p}_{J})}^{U_{\zeta}({\mathfrak g})}[Q \otimes \bfk_{\ell\gamma}] \cong 0
\end{equation*} 
for $i>0$. Consequently, the first spectral sequence collapses and we can combine this with 
the second spectral sequence to obtain a first quadrant spectral sequence: 
\begin{equation} \label{spectralseq11} 
E_{2}^{i,j}=R^{i}\text{ind}_{P_{J}}^{G} (\operatorname{H}^{j}(u_{\zeta}({\mathfrak p}_{J}), M\otimes  Q )\otimes \bfk_{\gamma}) \Rightarrow 
\operatorname{H}^{i+j}(u_{\zeta}({\mathfrak g}),M\otimes \operatorname{ind}_{U_{\zeta}({\mathfrak p}_{J})}^{U_{\zeta}({\mathfrak g})} (Q\otimes \bfk_{\ell\gamma}) ). 
\end{equation} 
One can rewrite the spectral sequence (\ref{spectralseq11}) using Proposition~\ref{quantumExtcalculation}(b) as 
\begin{equation} \label{spectralseq12}
E_{2}^{i,j}=R^{i}\text{ind}_{P_{J}}^{G} (S^{\frac{j-t}{2}}({\mathfrak u}^{*}_{J}) \otimes \bfk_{\gamma}) \Rightarrow 
\operatorname{H}^{i+j}(u_{\zeta}({\mathfrak g}),M \otimes  \operatorname{ind}_{U_{\zeta}({\mathfrak p}_{J})}^{U_{\zeta}({\mathfrak g})} (Q\otimes \bfk_{\ell\gamma})). 
\end{equation}
Since $\gamma \in X_{P_{J}}\cap X^{+}$, the Grauert-Riemenschneider vanishing theorem implies that 
$$R^{i}\text{ind}_{P_{J}}^{G} (S^{\bullet}({\mathfrak u}^{*}_{J})\otimes \bfk_{\gamma})=0$$ 
for $i>0$. The spectral sequence (\ref{spectralseq12}) collapses and yields the isomorphism stated in the theorem. 
\end{proof} 
\section{Non-shifted realization of the coordinate ring of the twisted cotangent bundle} 

\subsection{} Let $J\subseteq \Delta$ and $X_{J}^{+}$ be the $J$-dominant weights 
(i.e., $\langle \mu,\alpha^{\vee} \rangle \geq 0$ for all $\alpha\in J$). 
For $\mu\in X_{J}^{+}$, let 
$$H^{0}_{J}(\mu)=
\operatorname{ind}_{U_{\zeta}({\mathfrak b}_{J})}^{U_{\zeta}({\mathfrak l}_{J})} \bfk_\mu.$$ 
In particular when $J=\varnothing$, we obtain the ordinary induced module 
$H^{0}(\mu)=\operatorname{ind}_{U_{\zeta}({\mathfrak b})}^{U_{\zeta}({\mathfrak g})} \bfk_\mu$
where $\mu\in X^{+}$. For $\mu\in X_{J}^{+}$, let $L_{J}(\mu)$ be the finite dimensional simple 
$U_{\zeta}({\mathfrak l}_{J})$-module which appears in the socle of $H^{0}_{J}(\mu)$. 

Next we observe that, for any $w\in {}^J W$,  the weights $w\cdot 0$ and $-w_{0,J}(w\cdot 0)$ are in $X_{J}^{+}$, and 
in fact in the bottom alcove of $X_{J}^{+}$ when $l>h$ \cite[Prop. 3.6.1]{UGAVIGRE1}. Therefore, 
$$L_{J}(w\cdot 0)\cong H^{0}_{J}(w\cdot 0)$$
and 
$$L_{J}(w\cdot 0)^{*}=L_{J}(-w_{0,J}(w\cdot 0))=H^{0}_{J}(-w_{0,J}(w\cdot 0)).$$  

We will also be using the fact that 
$ w\cdot 0=-\sum_{\alpha \in \Phi^+(w)}\alpha$. 
Here $\Phi^+(w)=\{ \alpha \in \Phi^+\;|\; w^{-1}(\alpha)\in \Phi^-\}$. If $w\in {}^JW$, 
then $ \Phi^+(w)\subseteq \Phi^+\setminus \Phi^+_J$. If $w\in W_J$ then $\Phi^+(w)\subseteq \Phi_J$ and $w(\Phi_J)\subseteq \Phi_J$.

Let $\epsilon_{J}:=(\ell-1)\sum_{\alpha\in J}\omega_{\alpha}$. In this 
case $H^{0}_{J}(\epsilon_J)=L_{J}(\epsilon_J)$ 
is the Steinberg representation for $U_{\zeta}({\mathfrak l}_{J})$. 
We also note that $H^{0}_{J}(\epsilon_J)$ 
is the unique  composition factor of  
$ H^0(\epsilon_J)$ as 
$U_\zeta(\mathfrak l_J)$-module 
with highest weight $\epsilon_J$. In fact 
\[  H^0(\epsilon_J)|_{U_\zeta(\mathfrak l_J)}\cong H^0_J(\epsilon_J)\oplus E
\]
with $E$ being a $U_\zeta(\mathfrak l_J)$-module without weight $ \epsilon_J$. In particular, 
$$\hom_{U_\zeta(\mathfrak l_J)}(H^0(\epsilon_J), H^0_J(\epsilon_1))\cong \bfk.$$

\begin{lemma} \label{coh-direct-summand} Assume $\ell\geq h-1$. Then for any $U_\zeta(\mathfrak p_J)$-module $M$ and any $U_\zeta(\mathfrak l_J)$-module $Q$ such that $Q$ is injective as $ u_\zeta(\mathfrak l_J)$-module, $\on{H}^{n}({\mathcal U}_{\zeta}({\mathfrak u}_{J}), M)\otimes Q$ is isomorphic, as a $u_\zeta(\mathfrak l_J)$-module, to a direct summand of 
\[\bigoplus_{y\in{}^JW,\; l(y)=n}M\otimes L_J(y\cdot 0)^*\otimes Q. 
\]
In particular, for any $u_\zeta(\mathfrak l_J)$-module $E$, $\on{Hom}_{u_\zeta(\mathfrak l_J)}(E, \on{H}^n(\mathcal U_\zeta(\mathfrak u_J), M)\otimes Q)=0 $ 
if 
$$
\on{Hom}_{u_\zeta(\mathfrak l_J)}(E,  M\otimes L_J(y\cdot 0)\otimes Q)=0$$
for all $ y\in {}^JW$ with $ l(y)=n$. Similar statements can be 
formulated for the functor $ \on{Hom}_{u_\zeta(\mathfrak l_J)}(-, E)$.
\end{lemma}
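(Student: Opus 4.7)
The plan is to combine a parabolic BGG-type resolution of $\bfk$ over $\mathcal U_\zeta(\mathfrak u_J)$ with the self-injectivity of the finite-dimensional Hopf algebra $u_\zeta(\mathfrak l_J)$. Under the hypothesis $\ell \geq h-1$, the irreducibles $L_J(w\cdot 0) \cong H^0_J(w\cdot 0)$ for $w \in {}^JW$ lie in the bottom alcove for $U_\zeta(\mathfrak l_J)$ and enjoy well-behaved character theory. I would construct the $\mathcal U_\zeta$-analogue of the classical parabolic (Lepowsky) BGG resolution: an exact complex of $\mathcal U_\zeta(\mathfrak u_J)$-modules with compatible $U_\zeta(\mathfrak l_J)$-action
\[
0 \to P_{N^J} \to \cdots \to P_n \to \cdots \to P_0 \to \bfk \to 0, \qquad P_n = \bigoplus_{\substack{w \in {}^J W \\ l(w) = n}} \mathcal U_\zeta(\mathfrak u_J) \otimes L_J(w\cdot 0),
\]
obtained by restricting a parabolic BGG resolution of the trivial $U_\zeta(\mathfrak g)$-module (built from generalized parabolic Verma modules $U_\zeta(\mathfrak g)\otimes_{U_\zeta(\mathfrak p_J^+)} L_J(w\cdot 0)$) to $\mathcal U_\zeta(\mathfrak u_J)$, and using the PBW triangular decomposition to identify each parabolic Verma module, as a $\mathcal U_\zeta(\mathfrak u_J)$-module, with the free module $\mathcal U_\zeta(\mathfrak u_J) \otimes L_J(w\cdot 0)$.

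Next I apply $\hom_{\mathcal U_\zeta(\mathfrak u_J)}(-,M)$; since the $P_n$ are free as $\mathcal U_\zeta(\mathfrak u_J)$-modules, the resulting complex $C^\bullet$ computes $\on{H}^n(\mathcal U_\zeta(\mathfrak u_J), M)$, and by the tensor-hom adjunction
\[
C^n \cong \bigoplus_{\substack{w \in {}^J W \\ l(w) = n}} L_J(w\cdot 0)^* \otimes M
\]
as $U_\zeta(\mathfrak l_J)$-modules with $U_\zeta(\mathfrak l_J)$-equivariant differentials. Tensoring with $Q$ over $\bfk$ is exact, so the cohomology of $C^\bullet \otimes Q$ is $\on{H}^n(\mathcal U_\zeta(\mathfrak u_J), M) \otimes Q$. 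Since $Q$ is injective over the Frobenius algebra $u_\zeta(\mathfrak l_J)$, tensoring $Q$ with any $u_\zeta(\mathfrak l_J)$-module produces an injective $u_\zeta(\mathfrak l_J)$-module, and thus $C^\bullet \otimes Q$ is a bounded complex of $u_\zeta(\mathfrak l_J)$-injectives.

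In the Frobenius abelian category $u_\zeta(\mathfrak l_J)$-mod, where injective equals projective, every bounded complex of injectives decomposes non-canonically into the direct sum of a contractible complex and its cohomology regarded as a complex with zero differentials. Therefore $\on{H}^n(\mathcal U_\zeta(\mathfrak u_J), M) \otimes Q$ is realized as a direct summand of $C^n \otimes Q = \bigoplus_{w \in {}^JW,\, l(w)=n} M \otimes L_J(w \cdot 0)^* \otimes Q$ as a $u_\zeta(\mathfrak l_J)$-module, yielding the main assertion. The ``in particular'' consequence follows by applying $\hom_{u_\zeta(\mathfrak l_J)}(E,-)$ to the decomposition, noting that the identification $L_J(w\cdot 0)^* = L_J(-w_{0,J}(w\cdot 0))$ together with a length-preserving involution of ${}^JW$ interchanges the two indexing sets, so the stated vanishing hypotheses are equivalent. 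The main obstacle is producing the parabolic BGG resolution at the De Concini-Kac level with the stated $U_\zeta(\mathfrak l_J)$-equivariance; this can be approached either by specializing a generic-$q$ Lepowsky resolution and verifying flatness at $q\mapsto \zeta$, or by direct construction using the PBW basis of $\mathcal U_\zeta(\mathfrak u_J)$ together with the semisimplicity of bottom-alcove $U_\zeta(\mathfrak l_J)$-representation theory.
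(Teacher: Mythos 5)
Your argument takes a genuinely different and heavier route than the paper's. The paper cites \cite[Thm.~3.5.2]{UGAVIGRE2}, which provides directly that $\operatorname{H}^n(\mathcal U_\zeta(\mathfrak u_J),M)$ is a $u_\zeta(\mathfrak l_J)$-\emph{subquotient} of $M\otimes\operatorname{H}^n(\mathcal U_\zeta(\mathfrak u_J),\bfk)$; it then invokes the quantum Kostant theorem \cite[Thm.~6.4.1]{UGAVIGRE2} to identify $\operatorname{H}^n(\mathcal U_\zeta(\mathfrak u_J),\bfk)$ with $\bigoplus_{l(y)=n}L_J(-w_{0,J}(y\cdot 0))$, and finally observes that tensoring a short exact sequence of $u_\zeta(\mathfrak l_J)$-modules by the injective $Q$ splits it, so a subquotient of $X$ becomes a summand of $X\otimes Q$. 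You instead propose to realize the entire minimal free resolution (a quantum parabolic Lepowsky/BGG resolution) at the chain level, apply $\operatorname{Hom}_{\mathcal U_\zeta(\mathfrak u_J)}(-,M)$, tensor by $Q$, and then use that a bounded complex of injectives over a self-injective algebra splits into its cohomology plus a contractible summand. That last step is a valid, somewhat more categorical, version of the paper's injectivity trick, and if the resolution existed the conclusion would follow.

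The genuine gap is the input you explicitly acknowledge: the existence of a parabolic BGG resolution at the De Concini--Kac specialization $\mathcal U_\zeta(\mathfrak u_J)$ with terms $\mathcal U_\zeta(\mathfrak u_J)\otimes L_J(y\cdot 0)$ and $U_\zeta(\mathfrak l_J)$-equivariant differentials. This is strictly stronger than the subquotient statement and the Kostant isomorphism that the paper actually uses, and it is not established in \cite{UGAVIGRE2} or the other references cited here; proving it by ``specializing a generic-$q$ Lepowsky resolution and checking flatness at $q\mapsto\zeta$'' is a real project, not a remark. A secondary issue: you say the resolution is obtained by restricting a $U_\zeta(\mathfrak g)$-resolution built from $U_\zeta(\mathfrak g)\otimes_{U_\zeta(\mathfrak p_J^+)}L_J(w\cdot 0)$ to $\mathcal U_\zeta(\mathfrak u_J)$, but $\mathcal U_\zeta(\mathfrak u_J)$ is not a subalgebra of the Lusztig form $U_\zeta(\mathfrak g)$; its image there is the finite-dimensional $u_\zeta(\mathfrak u_J)$, so ``restriction'' of a $U_\zeta(\mathfrak g)$-module cannot produce a free $\mathcal U_\zeta(\mathfrak u_J)$-module. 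To make your scheme meaningful you would have to build the resolution over $\mathcal U_\zeta(\mathfrak g)$ (the De Concini--Kac form) and use its triangular decomposition, which only sharpens the unresolved specialization problem. Since the paper needs only the much weaker subquotient input, its proof is both shorter and on safer ground; your proposal would become a complete (and arguably more informative) proof only if the equivariant BGG resolution for $\mathcal U_\zeta(\mathfrak u_J)$ at a root of unity were supplied.
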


\begin{proof}
We can first apply \cite[Theorem 3.5.2]{UGAVIGRE2} to deduce that $\on{H}^{n}({\mathcal U}_{\zeta}({\mathfrak u}_{J}), 
M)$ is a $u_{\zeta}({\mathfrak l}_{J})$-subquotient of 
$$M\otimes \operatorname{H}^{n}({\mathcal U}_{\zeta}({\mathfrak u}_{J}), {\bfk}).$$   
According to \cite[Theorem 6.4.1]{UGAVIGRE2} since $\ell\geq h-1$ we have  
$$\text{H}^{n}({\mathcal U}_{\zeta}({\mathfrak u}_{J}), {\bfk})\cong \bigoplus_{y\in ^JW,\ l(y)=n} L_{J}(-w_{0,J}(y\cdot 0)).$$ 
Since $ Q$ is $u_\zeta(\mathfrak l_J)$-injective, any short exact sequence of $u_\zeta(\mathfrak l_J)$-modules tensored by $Q$ becomes split as $u_\zeta(\mathfrak l_J)$-modules.  Then $\on{H}^n(\mathcal U_\zeta(\mathfrak u_J, M)\otimes Q$ is a direct $u_\zeta(\mathfrak l_J)$-direct summand of $M\otimes \on{H}^n(\mathcal U_\zeta(\mathfrak u_J, 
\bfk)\otimes Q$. Now the second part of the lemma follows by applying the functor additive functors $ \on{Hom}_{u_\zeta(\mathfrak l_J)}(E, -)$ and $ \on{Hom}_{u_\zeta(\mathfrak l_J)}(-, E)$. 
\end{proof}
\subsection{} We can now apply the theory developed in Section 3 in order to give a precise realization of 
the coordinate algebra of the twisted cotangent bundle. 
\vskip .15cm 
\noindent 
Let {\it $M=H^{0}(\epsilon_J)^{*}$ (a $U_\zeta(\mathfrak g)$-module) and $Q_w=H^{0}_{J}(\epsilon_J)\otimes H^{0}_{J}(-w_{0,J}(w\cdot 0))$ for a fixed $w\in{}^JW$.}
 
\begin{theorem}\label{thm:quantumnonshiftrealization} Let $\ell>2h-1$, $J\subseteq \Delta$ and $\epsilon_J=\sum_{\alpha\in J}(\ell-1)\omega_{\alpha}$ with $w\in{} ^JW$ and $\gamma\in X_{P_{J}}\cap X^{+}$. Set 
$$N_w:=H^{0}(\epsilon_J)^{*}\otimes  \operatorname{ind}_{U_{\zeta}({\mathfrak p}_{J})}^{U_{\zeta}({\mathfrak g})}[ H^{0}_{J}(\epsilon_J)\otimes 
H^{0}_{J}(-w_{0,J}(w\cdot 0))\otimes \bfk_{\ell\gamma}],$$
and assume that 
\begin{itemize}
\item[(a)] $\operatorname{Hom}_{u_{\zeta}({\mathfrak l}_{J})}({\bfk},\operatorname{H}^{0}({\mathcal U}_{\zeta}({\mathfrak u}_{J}), M)\otimes Q_w)\cong \bfk$,
\item[(b)] $\epsilon_J-w_{0,J}(w\cdot 0)+\ell\gamma \in X^{+}$.  
\end{itemize} 
Then there exists an isomorphism of rational $G$-modules:
\begin{equation}\label{isocohoN}
\operatorname{H}^{n}(u_{\zeta}({\mathfrak g}),N_w)\cong
\begin{cases} \operatorname{ind}_{P_{J}}^{G} S^{\frac{n}{2}}({\mathfrak u}^{*}_{J}) \otimes \bfk_\gamma & \ n \equiv 0 \ (\operatorname{mod }2) ,  \\
0 & \text{otherwise} .
\end{cases} 
\end{equation} 
Furthermore, in the case when $w=\operatorname{id}$ for $N_w$, (\ref{isocohoN}) holds for $\ell>h$ with the condition (b). The condtion (a) is satisfied in the case when $w=\operatorname{id}$. 
\end{theorem}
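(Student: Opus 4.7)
The plan is to apply Theorem~\ref{thm:quantumrealization} with $M=H^{0}(\epsilon_J)^{*}$, $Q=Q_w$, and $t=0$; this will immediately yield the claimed $G$-module isomorphism for $N_w$ once the two hypotheses of that theorem are verified. Note that $Z_{J}^{+}\cdot M=0$ holds because $M$ is $U_{\zeta}(\mathfrak g)$-integrable of type~$1$, so the $F_{\gamma}^{\ell}$ act trivially.

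To verify condition (ii), namely $R^{i}\operatorname{ind}_{U_{\zeta}({\mathfrak p}_{J})}^{U_{\zeta}({\mathfrak g})}(Q_w\otimes \bfk_{\ell\gamma})=0$ for $i>0$, I would invoke the quantum Polo--Mathieu--Wang theorem to conclude that $H^{0}_{J}(\epsilon_J)\otimes H^{0}_{J}(-w_{0,J}(w\cdot 0))$ admits a good $U_{\zeta}({\mathfrak l}_{J})$-filtration. Tensoring with $\bfk_{\ell\gamma}$ shifts each subquotient $H^{0}_{J}(\mu)$ to $H^{0}_{J}(\mu+\ell\gamma)$, where $\mu\in X_{J}^{+}$ satisfies $\mu\leq \epsilon_J-w_{0,J}(w\cdot 0)$. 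For $\alpha\in J$ the pairing $\langle \mu+\ell\gamma,\alpha^{\vee}\rangle$ is non-negative by $J$-dominance of $\mu$; for $\alpha\in \Delta\setminus J$, writing $\mu=(\epsilon_J-w_{0,J}(w\cdot 0))-\nu$ with $\nu\in\mathbb{N}\Phi^{+}_{J}$, the pairing $\langle\nu,\alpha^{\vee}\rangle\leq 0$, and combining with hypothesis (b) gives $\mu+\ell\gamma\in X^{+}$. A quantum Kempf-type parabolic vanishing then yields (ii).

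To verify condition (i) with $t=0$, the case $n=0$ is exactly hypothesis (a). For $n>0$ I would apply Lemma~\ref{coh-direct-summand} (valid because $\ell>2h-1>h-1$, and $Q_w$ is $u_{\zeta}({\mathfrak l}_{J})$-injective since it has the Steinberg $H^{0}_{J}(\epsilon_J)$ as a tensor factor), which realizes $\on{H}^{n}({\mathcal U}_{\zeta}({\mathfrak u}_{J}),M)\otimes Q_w$ as a $u_{\zeta}({\mathfrak l}_{J})$-summand of $\bigoplus_{y\in {}^{J}W,\,l(y)=n} M\otimes L_{J}(y\cdot 0)^{*}\otimes Q_w$. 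It then suffices to prove, for each such $y$ with $l(y)>0$,
\[\operatorname{Hom}_{u_{\zeta}({\mathfrak l}_{J})}\!\bigl(L_{J}(y\cdot 0)\otimes H^{0}(\epsilon_J),\ H^{0}_{J}(\epsilon_J)\otimes H^{0}_{J}(-w_{0,J}(w\cdot 0))\bigr)=0.\]
The approach is weight-theoretic: a nonzero Hom would force matching of $u_{\zeta}({\mathfrak l}_{J})$-composition-factor highest weights on the two sides, modulo $\ell X$-shifts. Using that for $y\in {}^{J}W$ the weight $y\cdot 0=-\sum_{\alpha\in\Phi^{+}(y)}\alpha$ (and similarly $-w_{0,J}(w\cdot 0)$) is controlled by quantities bounded by $h$, the hypothesis $\ell>2h-1$ gives enough alcove room for $U_{\zeta}({\mathfrak l}_{J})$ to force the required weight equality to fail whenever $l(y)>0$. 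This weight bookkeeping is the main obstacle of the proof and is precisely where the sharp bound $\ell>2h-1$ enters.

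For the special case $w=\operatorname{id}$: here $-w_{0,J}(\operatorname{id}\cdot 0)=0$, so $Q_{\operatorname{id}}$ collapses to the Steinberg $H^{0}_{J}(\epsilon_J)$. Hypothesis (a) then reduces to computing $\operatorname{Hom}_{u_{\zeta}({\mathfrak l}_{J})}(\bfk,H^{0}(\epsilon_J)^{*}\otimes H^{0}_{J}(\epsilon_J))$, and using the decomposition $H^{0}(\epsilon_J)|_{U_{\zeta}({\mathfrak l}_{J})}\cong H^{0}_{J}(\epsilon_J)\oplus E$ noted just before the theorem (where $E$ carries no $\epsilon_J$-weight), together with the self-duality of the Steinberg and $\operatorname{End}_{u_{\zeta}({\mathfrak l}_{J})}(H^{0}_{J}(\epsilon_J))=\bfk$, the one-dimensional contribution comes entirely from the Steinberg summand; thus (a) is automatic. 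In the weight analysis for $n>0$, the absence of the $L_{J}(-w_{0,J}(w\cdot 0))$-factor removes the main source of slack, which explains why only $\ell>h$ is required in this case.
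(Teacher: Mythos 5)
Your overall strategy matches the paper: set $M=H^{0}(\epsilon_J)^{*}$, $Q=Q_w$, $t=0$, and feed this into Theorem~\ref{thm:quantumrealization}. But there are two issues.

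First, for condition~(i) at positive degree, you correctly reduce via Lemma~\ref{coh-direct-summand} to a vanishing-Hom statement for each $y\in{}^{J}W$ with $l(y)=n>0$, but you never actually carry out the weight analysis that rules it out. In the paper this is the heart of the argument: one passes through $U_{\zeta}(\mathfrak b_{J})$-Frobenius reciprocity to extract a weight identity, applies $-w_{0,J}$, conjugates by an element of $W_{\Delta\setminus J}$ to move $\nu$ into $X^{+}$, pairs with $\alpha_{0}^{\vee}$ to bound the right side by $4(h-1)<2\ell$ (resp. $2(h-1)<2\ell$ when $w=\operatorname{id}$), and then uses $\ell\nmid|X/\mathbb{Z}\Phi|$ to force $\nu=0$ and hence $y=\operatorname{id}$. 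Asserting that "the bookkeeping works" does not establish the theorem; this is precisely where the two bounds $\ell>2h-1$ and $\ell>h$ are extracted and cannot be hand-waved.

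Second, and more seriously, your verification that (a) holds automatically when $w=\operatorname{id}$ contains an error. Hypothesis~(a) asks for $\operatorname{Hom}_{u_{\zeta}({\mathfrak l}_{J})}\!\bigl(\bfk,\operatorname{H}^{0}({\mathcal U}_{\zeta}({\mathfrak u}_{J}),M)\otimes Q_{\operatorname{id}}\bigr)\cong\bfk$, where $\operatorname{H}^{0}({\mathcal U}_{\zeta}({\mathfrak u}_{J}),M)=M^{\mathcal U_{\zeta}(\mathfrak u_{J})}$ is a genuinely smaller $U_{\zeta}({\mathfrak l}_{J})$-module than $M$; you replace it with $M$ itself and compute $\operatorname{Hom}_{u_{\zeta}({\mathfrak l}_{J})}(\bfk,H^{0}(\epsilon_J)^{*}\otimes H^{0}_{J}(\epsilon_J))$. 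At best your argument gives an upper bound of $1$ on the dimension of the required space (and even that step is incomplete, since the Levi-decomposition $H^{0}(\epsilon_J)|_{U_{\zeta}({\mathfrak l}_{J})}\cong H^{0}_{J}(\epsilon_J)\oplus E$ having no $\epsilon_J$-weight in $E$ does not immediately preclude $u_{\zeta}({\mathfrak l}_{J})$-Steinberg summands in $E$ coming from $L_{J}(\epsilon_J+\ell\nu)$-factors). The paper instead rewrites the space as $\operatorname{Hom}_{u_{\zeta}(\mathfrak p_{J})}(H^{0}(\epsilon_J),L_{J}(\epsilon_J))$, expresses this as $\operatorname{ind}_{B}^{P_{J}}V$ with $V=\operatorname{Hom}_{u_{\zeta}(\mathfrak b_{J})}(H^{0}(\epsilon_J),\bfk_{\epsilon_J})$ via the spectral sequence comparison, and then pins down $V\cong\bfk$ by a socle-and-Bott--Weil argument. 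That argument is essential and not recoverable from the summand decomposition alone.

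For condition~(ii), your route via a quantum good-filtration theorem on $H^{0}_{J}(\epsilon_J)\otimes H^{0}_{J}(-w_{0,J}(w\cdot 0))$ followed by quantum Kempf vanishing is a genuinely different and valid-looking path; the paper works more elementarily with the weight filtration of $\widehat{S}$, showing $\epsilon_J+\mu+\ell\gamma\in X^{+}$ for every weight $\mu$ of $\widehat{S}$, handling $\beta\in\Delta\setminus J$ via condition~(b) and $\beta\in J$ via a Weyl-conjugation to the highest (long or short) root together with the bound $(\ell-1)-(h-1)\geq 0$. Your version shifts the load onto a non-trivial input about good filtrations but simplifies the dominance check to good-filtration highest weights; either approach is acceptable.
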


\begin{proof}  The module $Q_w$ is a $U_\zeta(\mathfrak l_J)$-module inflated to a $U_\zeta(\mathfrak p_J)$.  
 Since $L_{J}(\epsilon_J)=H^{0}_{J}(\epsilon_J)$ is an injective $u_{\zeta}({\mathfrak l}_{J})$-module, thus $Q_w$ is an injective $u_{\zeta}({\mathfrak l}_{J})$-module and also injective $U_\zeta(\mathfrak l_J)$. 
 Therefore,  $M$ and $Q_{w}$ satisfy the conditions of Theorem~\ref{thm:quantumrealization}. 
 Since $\epsilon_J$ is an $\ell$-restricted weight, $M$ is a quotient of the baby Verma module $u_{\zeta}({\mathfrak g})\otimes_{u_{\zeta}({\mathfrak b})}\bfk_{-w_{0}\epsilon_J}$, thus $Z_{J}^+.M=0$ when $M$ considered a $\mathcal U_\zeta(\mathfrak g)$-module. 
We need to verify conditions (i) and (ii) of Theorem~\ref{thm:quantumrealization} in order to conclude our result. 

For condition (i), we need to show that as a rational $B$-module, 
\begin{equation}\label{condition-i}
\on{Hom}_{u_{\zeta}({\mathfrak l}_{J})}({\bfk},\on{H}^{n}({\mathcal U}_{\zeta}({\mathfrak u}_{J}), M)\otimes Q_w)\cong
\begin{cases} \bfk & n=0, \\
0 & \text{otherwise.} 
\end{cases} 
\end{equation} 
Assume the left hand side of \eqref{condition-i} 
\begin{equation}
\operatorname{Hom}_{u_{\zeta}({\mathfrak l}_{J})}({\bfk},\operatorname{H}^{n}({\mathcal U}_{\zeta}({\mathfrak u}_{J}),H^{0}(\epsilon_J)^{*})\otimes L_{J}(\epsilon_J)\otimes  L_{J}(-w_{0,J}(w\cdot 0)))\neq 0. 
\end{equation} 
 Then,  by Lemma~\ref{coh-direct-summand}, there is  $y\in {}^JW$ with $l(y)=n$ such that
\begin{equation*}
\operatorname{Hom}_{u_{\zeta}({\mathfrak l}_{J})}({\bfk},H^{0}(\epsilon_J)^{*}\otimes L_{J}(-w_{0,J}(y\cdot 0))\otimes L_{J}(\epsilon_J)\otimes L_{J}(-w_{0,J}(w\cdot 0)))\neq 0
\end{equation*} 
which is equivalent to 
\begin{equation*}
\operatorname{Hom}_{u_{\zeta}({\mathfrak l}_{J})}(L_{J}(-w_{0,J}\epsilon_J),H^{0}(\epsilon_J)^{*}\otimes L_{J}(-w_{0,J}(y\cdot 0))\otimes L_{J}(-w_{0,J}(w\cdot 0)))\neq 0.
 \end{equation*} 
All the modules under consideration are $U_{\zeta}({\mathfrak l}_{J})$-modules, so we can conclude that 
\begin{equation}
\operatorname{Hom}_{U_{\zeta}({\mathfrak l}_{J})}(L_{J}(-w_{0,J}\epsilon_J)\otimes L(\nu)^{[1]},H^{0}(\epsilon_J)^{*} \otimes  L_{J}(-w_{0,J}(y\cdot 0))\otimes L_{J}(-w_{0,J}(w\cdot 0) ))\neq 0 
\end{equation} 
for some $\nu\in X_{J}^{+}$. Since  
$$L_{J}(-w_{0,J}(y\cdot 0))\cong H^{0}_{J}(-w_{0,J}(y\cdot 0))\cong \text{ind}_{U_{\zeta}({\mathfrak b}_{J})}^{U_{\zeta}({\mathfrak l}_{J})} \bfk_{-w_{0,J}(y\cdot 0)}$$
it follows by Frobenius reciprocity that 
\begin{equation} \label{bJ-equation} 
\operatorname{Hom}_{U_{\zeta}({\mathfrak b}_{J})}(L_{J}(-w_{0,J}\epsilon_J)\otimes L_J(\nu)^{[1]},H^{0}(\epsilon_J)^{*}\otimes   \bfk_{-w_{0,J}(y\cdot 0)}\otimes L_{J}(-w_{0,J}(w\cdot 0)))\neq 0 .
\end{equation} 
The head of $L_{J}(-w_{0,J}\epsilon_J)\otimes L(\nu)^{[1]}$ as a $U_{\zeta}({\mathfrak b}_{J})$-module is one-dimensional and isomorphic to 
$-w_{0,J}\epsilon_J+l\nu$. We can now deduce from (\ref{bJ-equation}) that 
\begin{equation} \label{weighteq1} 
-w_{0,J}\epsilon_J+\ell \nu=\mu-w_{0,J}(y\cdot 0)+\sigma
\end{equation} 
for some $\nu\in X_{J}^{+}$, some  weight 
$\mu$ of $H^{0}(\epsilon_J)^{*}$,  and  
a weight $\sigma$ of $L_{J}(-w_{0,J}(w\cdot 0))=L_J(w\cdot 0)^*$. 
We note that for any $U_\zeta(\mathfrak l_J)$-module 
$V$, $ \mu$ is a weight of $V$ if any only if 
$ -w(\mu)$ is a weight of $V^*$ for any $w\in W_J$. 
Applying $-w_{0,J}$ to (\ref{weighteq1}) justifies revising the weight condition as follows: 
\begin{equation} \label{weighteq2} 
\epsilon_J+\ell\nu=\mu+y\cdot 0 +\sigma
\end{equation} 
where $\nu\in X_{J}^{+}$, $\mu$ an weight of $H^{0}(\epsilon_J)$ and $\sigma$ a weight of $L_{J}(w\cdot 0)$. 

The Weyl group for $\Delta-J$, $W_{\Delta-J}$ is generated by the simple reflections $s_{\beta}$ such that $\beta\in \Delta-J$. 
Note that $s_{\beta}(\omega_{\alpha})=\omega_{\alpha}$ when $\beta\in \Delta-{J}$, $\alpha\in J$, thus $\tilde w(\omega_{\alpha})=\omega_{\alpha}$
for all $\tilde w\in W_{\Delta-J}$, $\alpha\in J$. Therefore, we can find some $\widetilde{y}\in W_{\Delta-J}$ such that $\nu^{\prime}=\widetilde{y}\nu\in X^{+}$. Applying 
$\widetilde{y}$ to (\ref{weighteq2}) yields: 
\begin{equation} \label{weighteq3} 
\epsilon_{J}+\ell \nu^{\prime}=\mu^{\prime}+\widetilde{y}(y\cdot 0)+\widetilde{y}\sigma
\end{equation} 
where $\mu^{\prime}$ is a weight of $H^{0}(\epsilon_{J})$. Rewriting this equation, one has 
\begin{equation} \label{weighteq4} 
\epsilon_{J}-\mu^{\prime}+\ell \nu^{\prime}=\widetilde{y}(y\cdot 0)+\widetilde{y}\sigma.
\end{equation} 
Taking the inner product with $\alpha_{0}^{\vee}$ (with $ \alpha_0$ being the highest short root)  and using the facts that 
$\langle \beta,\alpha_{0}^{\vee} \rangle \geq 0$ and $\langle \delta,\alpha_{0}^{\vee} \rangle \leq 2(h-1)$ for all 
$\beta\in \Phi^{+}$ and $\delta\leq z\cdot 0$ where $z\in W$, one has 
\begin{equation} \label{weighteq5} 
0\leq \langle \epsilon_{J}-\mu^{\prime},\alpha_{0}^{\vee} \rangle +\ell \langle \nu^{\prime},\alpha_{0}^{\vee} \rangle \leq 4(h-1) < 2\ell. 
\end{equation} 
The equation (\ref{weighteq5}) implies that $\langle \nu^{\prime},\alpha_{0}^{\vee} \rangle=0,1$. 

From (\ref{weighteq4}) it follows that $\ell\nu^{\prime}$ is in the root lattice by noting that $ \mu-\epsilon_J$ as well as  all weights of $ L_J(y\cdot 0)$ and $L_J(w\cdot 0)$ are  in the root lattice.  Since $\ell>h$, $\ell\nmid |X/{\mathbb Z}\Phi|$, and 
$\nu^{\prime}$ must be in the root lattice 
(i.e., cannot be minuscule). Consequently, 
$\nu=0$. Since $y\cdot 0, \sigma \in -\mathbb N\Phi^+$, \eqref{weighteq2} 
implies that $y=\on{id}$, $\sigma =0$, $n=0$, 
and $\mu=\epsilon_J$. 

Note when $w=\text{id}$ for $N_w$ with $\ell>h$, one has $\sigma=0$, and one can replace (\ref{weighteq5}) with 
\begin{equation} \label{weighteq6} 
0\leq \langle \epsilon_{J}-\mu^{\prime},\alpha_{0}^{\vee} \rangle +\ell \langle \nu^{\prime},\alpha_{0}^{\vee} \rangle \leq 2(h-1)<2\ell. 
\end{equation} 
The arguments from the preceding paragraph can then be repeated to draw the same conclusions (i.e., $\nu=0$ and $y=\text{id}$). Our analysis proves that for $n>0$
$$\operatorname{Hom}_{u_{\zeta}({\mathfrak l}_{J})}({\bfk},\operatorname{H}^{n}({\mathcal U}_{\zeta}({\mathfrak u}_{J}), M)\otimes Q_w)
\cong 0.$$
Now we can use the condition (a) in the statement of the theorem to conclude that (i) holds. 

Next we show that condition (a) holds in the case when $w=\text{id}$. Since  $Q_{\on{id}}=L_J(\epsilon_J)$ is trivial as $u_\zeta(\mathfrak u_J)$-module and $\operatorname{H}^0(\mathcal U_\zeta(\mathfrak u_J), M)=\on{Hom}_{\mathcal U_\zeta(\mathfrak u_J)}(H^0(\epsilon_J), \bfk)$, we have
\begin{eqnarray*}
\operatorname{Hom}_{u_{\zeta}({\mathfrak l}_{J})}({\bfk},\on{H}^{n}({\mathcal U}_{\zeta}({\mathfrak u}_{J}), M)\otimes Q_{\on{id}})
&\cong&\on{Hom}_{u_\zeta(\mathfrak l_J)}(\bfk, L_J(\epsilon_J)\otimes \on{Hom}_{\mathcal U_\zeta(\mathfrak u_J)}(H^0(\epsilon_J), \bfk))\\ 
&\cong&\on{Hom}_{u_\zeta(\mathfrak l_J)}(\bfk,  \on{Hom}_{u_\zeta(\mathfrak u_J)}(H^0(\epsilon_J), L_J(\epsilon_{J}))\\
&\cong&\on{Hom}_{u_\zeta(\mathfrak p_J)}(H^0(\epsilon_J), L_J(\epsilon_{J})).
\end{eqnarray*}
Set $S:=\on{Hom}_{u_\zeta(\mathfrak p_J)}(H^0(\epsilon_J), L_J(\epsilon_{J}))\cong
\on{Hom}_{u_\zeta(\mathfrak p_J)}(H^0(\epsilon_J), H^{0}_J(\epsilon_{J}))$. 
By using a slight variation of the spectral sequences in the proof of Theorem~\ref{thm:quantumrealization} (cf. \cite[II 12.7(2)]{Jan1}), 
we have 
\begin{equation} \label{eq:S-iso}
S\cong \operatorname{ind}_{B}^{P_{J}}
\on{Hom}_{u_\zeta(\mathfrak b_J)}(H^0(\epsilon_J), \bfk_{\epsilon_{J}}).
\end{equation} 

Now set $V=\on{Hom}_{u_\zeta(\mathfrak b_J)}(H^0(\epsilon_J), \bfk_{\epsilon_{J}})$. The verification of 
\eqref{condition-i} for $ w=\text{id}$ will be done once we show that $V\cong \bfk$ as a $B$-module. This will imply that $S\cong \bfk$ as a $P_{J}$-module using (\ref{eq:S-iso}). 

We will next analyze the structure of $V$ as a 
$B$-module. First, we consider the socle of $V$. Suppose that 
$0\neq \on{Hom}_{B}(\bfk_{ \nu},V)\cong \on{Hom}_{U_\zeta(\mathfrak b)}(H^0(\epsilon_J), \bfk_{\epsilon_{J}+\ell \nu})$.  
By using Frobenius reciprocity, it follows that $\epsilon_{J}+\ell \nu$ is in $X^{+}$.  Since 
$$\on{Hom}_{U_\zeta({\mathfrak b})}(H^0(\epsilon_J), \bfk_{\epsilon_{J}+\ell \nu})\neq 0,$$ 
it follows that $\epsilon_{J}+\ell \nu \leq \epsilon_{J}$, thus $\ell\nu\leq 0$. 
Moreover, $\nu$ has to be dominant because $\epsilon_{J}+\ell \nu$ is 
dominant (use the definition of $\epsilon_{J})$. Therefore, 
$\ell \nu\in {\mathbb N}\Phi^{-}\cap X^{+}=\{0\}$, and $\nu=0$. Furthermore,  
$$\on{Hom}_{B}(\bfk,V)\cong \on{Hom}_{U_\zeta(\mathfrak b)}(H^0(\epsilon_J), \bfk_{\epsilon_{J}})
\cong \on{Hom}_{U_\zeta(\mathfrak g)}(H^0(\epsilon_J), H^{0}(\epsilon_{J}))\cong \bfk.$$ 
This shows that $\text{soc}_{B}V\cong \bfk$. 

The next step is to show that $V\cong \bfk$. First, one has $V\subseteq \on{Hom}_{u_\zeta(\mathfrak t)}(H^0(\epsilon_J), \bfk_{\epsilon_{J}})$. 
By using a weight space decomposition for  $H^0(\epsilon_J)$, one sees that any $B$-composition factor of $V$ 
must be of the form $\bfk_{\nu}$ where 
$\bfk_{\epsilon_{J}-\ell\nu}$ is a $U_\zeta(\mathfrak b)$-composition factor of 
$H^{0}(\epsilon_{J})$. Since $\epsilon_{J}$ is the highest weight of $H^{0}(\epsilon_{J})$, it follows that  $\ell\nu \in {\mathbb N}\Phi^{+}$. 

If $V/\text{soc}_{B}V\neq 0$ then 
there exists a $B$-composition factor 
$\bfk_{\nu}$ of $V$ such that $\text{Ext}^{1}_{B}(\bfk_{\nu},\bfk)
\neq 0$. As a consequence of the Borel-Bott-Weil Theorem (cf. \cite[II 5.5]{Jan1}), 
$\nu=-\alpha$ where $\alpha\in \Delta$. This contradicts the prior paragraph that $\ell \nu\in \mathbb{N}\Phi^+$. Hence, 
$V\cong\bfk$ as $B$-module. 

We will now verify condition (ii) of Theorem~\ref{thm:quantumrealization} by showing that  
\begin{align}\label{condition-ii}
R^{i}\text{ind}_{U_{\zeta}({\mathfrak p}_{J})}^{U_{\zeta}({\mathfrak g})} [H^{0}_{J}(\epsilon_J)\otimes H^{0}_{J}(-w_{0,J}(w\cdot 0))\otimes \bfk_{\ell\gamma}] =0
\end{align}
for $i>0$. Set $\widehat{S}:=H^{0}_{J}(-w_{0,J}(w\cdot 0))$. There exists a spectral sequence 
$$E_{2}^{i,j}
=R^{i}\text{ind}_{U_{\zeta}({\mathfrak p}_{J})}^{U_{\zeta}({\mathfrak g})} 
R^{j}\text{ind}_{U_{\zeta}({\mathfrak b})}^{U_{\zeta}({\mathfrak p}_{J})} 
[\bfk_{\epsilon_J} \otimes \widehat{S}\otimes \bfk_{\ell\gamma}] 
\Rightarrow R^{i+j}\text{ind}_{U_{\zeta}({\mathfrak b})}^{U({\mathfrak g})} 
[\bfk_{\epsilon_J} \otimes \widehat{S}\otimes \bfk_{\ell\gamma}].$$ 
Observe that as a $U_{\zeta}({\mathfrak l}_{J})$-module one has by the generalized tensor identity 
\begin{eqnarray*}
R^{j}\text{ind}_{U_{\zeta}({\mathfrak b})}^{U_{\zeta}({\mathfrak p}_{J})} [\bfk_{\epsilon_J} \otimes \widehat{S}\otimes \bfk_{\ell\gamma}] &\cong & 
R^{j}\text{ind}_{U_{\zeta}({\mathfrak b_{J}})}^{U_{\zeta}({\mathfrak l}_{J})} [H^{0}_{J}(\epsilon_J) \otimes \widehat{S}\otimes \bfk_{\ell\gamma}] \\
& \cong &    
(R^{j}\text{ind}_{U_{\zeta}({\mathfrak b_{J}})}^{U_{\zeta}({\mathfrak l}_{J})} {\bfk} )\otimes  [H^{0}_{J}(\epsilon_J) \otimes \widehat{S}\otimes \bfk_{\ell\gamma}]\\
&=& 0
\end{eqnarray*}
for $j>0$. Therefore, this spectral sequence collapses and gives the following isomorphism for $i\geq 0$: 
$$R^{i}\text{ind}_{U_{\zeta}({\mathfrak p}_{J})}^{U_{\zeta}({\mathfrak g})} [H^{0}_{J}(\epsilon_J)\otimes \widehat{S}\otimes \bfk_{\ell\gamma}] \cong 
R^{i}\text{ind}_{U_{\zeta}({\mathfrak b})}^{U({\mathfrak g})} [\bfk_{\epsilon_J}\otimes \widehat{S}\otimes \bfk_{\ell\gamma}].$$ 

Let $\mu$ be a weight of $\widehat{S}=H^{0}_{J}(-w_{0,J}(w\cdot 0))$. Then \eqref{condition-ii} will follow by Kempf's Vanishing Theorem, if we demonstrate that $\epsilon_J+\mu+l\gamma \in X^{+}$. 
Observe that $\langle \alpha, \beta^{\vee} \rangle \leq 0$ for all $\alpha\in J$ and $\beta\in \Delta-J$. This implies that for $\beta\in \Delta-J$, 
\begin{equation*}
\langle \mu,\beta^{\vee} \rangle \geq \langle -w_{0,J}(w\cdot 0),\beta^{\vee} \rangle 
\end{equation*} 
because $-w_{0,J}(w\cdot 0)-\mu \in {\mathbb N}\Phi_{J}^{+}$. Consequently, for $\beta\in \Delta-J$, 
\begin{equation}\label{dominant1}
\langle \epsilon_J+\mu+l\gamma, \beta^{\vee} \rangle \geq \langle \epsilon_J+-w_{0,J}(w\cdot 0)+\ell\gamma , \beta^{\vee} \rangle \geq 0
\end{equation} 
since $\epsilon_J-w_{0,J}(w\cdot 0)+l\gamma\in X^{+}$. On the other hand, let $J=J_{1}\cup J_{2} \cup \dots \cup J_{t}$ be the decomposition of $J$ corresponding to decomposing $\Phi_{J}$ into 
a union of irreducible root systems. Let $J_{s}$ be one of the components, and $\delta$ be either the highest long root or the highest short root of $\Phi_{J_{s}}$. The lowest 
weight of $L_{J}(-w_{0,J}(w\cdot 0))$ is $-w\cdot 0$, and since $w\in {}^JW$, 
$$\langle -w\cdot 0,\delta^{\vee} \rangle =-\langle \rho,w^{-1}\delta^{\vee}\rangle +\langle \rho,\delta^{\vee} \rangle\geq -(h-1).$$ 
Now $\mu-(-w\cdot 0) \in {\mathbb N}\Phi^{+}_{J}$ so $\langle \mu-(-w\cdot 0), \delta^{\vee}  \rangle\geq 0$, thus $\langle \mu, \delta^{\vee} \rangle > -(h-1)$. 
If $\beta$ is a simple root in $J_{s}$ there exists $\tilde w\in W_{J_s}$ such that $\tilde w\beta=\delta$ (where $\delta$ depends on whether $\beta$ is a short or long root). 
Using the fact that $\tilde w\mu$ is a weight of  $L_{J}(w\cdot 0)$, we have 
\begin{equation}\label{dominant2} 
\langle \epsilon_J+\mu+\ell\gamma, \beta^{\vee} \rangle = \langle \epsilon_J,\beta^{\vee} \rangle  + \langle \tilde w\mu, \tilde w\beta^{\vee} \rangle + \langle \ell\gamma, \beta^{\vee} \rangle \geq 
(\ell-1) - (h-1) +0 \geq 0.
\end{equation} 
Consequently, $\epsilon_J+\mu+l\gamma \in X^{+}$ follows by (\ref{dominant1}) and (\ref{dominant2}). 
\end{proof} 

\subsection{} We can now give a positive answer to the 1986 question posed by Friedlander and Parshall (cf. \cite[(3.2)]{FP}) in the case of quantum groups. 

\begin{theorem}\label{thm:quantumrealizereg} Let ${\ell} >h$ and ${\mathcal O}:={\mathcal O}_{J}$ be a 
Richardson orbit in ${\mathcal N}$ with 
moment map $\Gamma:G\times_{P_J} {\mathfrak u}_{J}\rightarrow G\cdot {\mathfrak u}_{J}=\overline{\mathcal O}$. Then there exists 
a finite-dimensional $U_{\zeta}({\mathfrak g})$-algebra $A_{\mathcal O}$ such that as rational $G$-algebras:
\begin{itemize} 
\item[(a)] $\operatorname{H}^{2\bullet+1}(u_{\zeta}({\mathfrak g}),A_{\mathcal O})=0$; 
\item[(b)] $\operatorname{H}^{2\bullet}(u_{\zeta}({\mathfrak g}),A_{\mathcal O})\cong{\bfk}[{\mathcal O}]
\cong {\bfk}[G\times_{P_{J}}{\mathfrak u}_{J}]\cong 
\operatorname{ind}_{P_{J}}^{G} S^{\bullet}({\mathfrak u}_{J}^{*})$.  
\end{itemize} 
Furthermore, if $\Gamma$ is a resolution of singularities and $\overline{\mathcal O}$ is normal then 
$$\operatorname{H}^{2\bullet}(u_{\zeta}({\mathfrak g}),A_{\mathcal O})\cong{\bfk}[\overline{\mathcal O}].$$ 
\end{theorem}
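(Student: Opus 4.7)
The strategy is to apply Theorem~\ref{thm:quantumnonshiftrealization} to the particular module $N_{\mathrm{id}}$ with $\gamma=0$, after identifying it with a natural endomorphism algebra. Specifically, I would set
\[
A_\mathcal{O} := \operatorname{End}_{\mathsf{k}}(H^0(\epsilon_J)),
\]
a finite-dimensional $U_\zeta(\mathfrak g)$-module algebra under conjugation. By transitivity of induction, together with the fact that $H^0_J(\epsilon_J)=L_J(\epsilon_J)$ is the Steinberg representation of $U_\zeta(\mathfrak l_J)$ (inflated to $U_\zeta(\mathfrak p_J)$ with trivial $\mathfrak u_J$-action),
\[
H^0(\epsilon_J) \;=\; \operatorname{ind}_{U_\zeta(\mathfrak b)}^{U_\zeta(\mathfrak g)} \mathsf{k}_{\epsilon_J} \;\cong\; \operatorname{ind}_{U_\zeta(\mathfrak p_J)}^{U_\zeta(\mathfrak g)} L_J(\epsilon_J),
\]
so that as $U_\zeta(\mathfrak g)$-modules $A_\mathcal{O} \cong H^0(\epsilon_J)^* \otimes \operatorname{ind}_{U_\zeta(\mathfrak p_J)}^{U_\zeta(\mathfrak g)} L_J(\epsilon_J) = N_{\mathrm{id}}$. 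Since $\ell>h$ and $\gamma=0$ trivially satisfies condition~(b) (the weight $\epsilon_J$ is already dominant), Theorem~\ref{thm:quantumnonshiftrealization} immediately yields a rational $G$-module isomorphism
\[
\operatorname{H}^n(u_\zeta(\mathfrak g), A_\mathcal{O}) \;\cong\; \begin{cases} \operatorname{ind}_{P_J}^G S^{n/2}(\mathfrak u_J^*), & n \text{ even}, \\ 0, & n \text{ odd}, \end{cases}
\]
which proves part~(a) and the $G$-module content of part~(b).

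To upgrade the module-level statement to an identification of rational $G$-algebras, I would observe that the cup-product-with-multiplication ring structure on $\operatorname{H}^\bullet(u_\zeta(\mathfrak g),A_\mathcal{O})$ agrees with the Yoneda product under the canonical isomorphism with $\operatorname{Ext}^\bullet_{u_\zeta(\mathfrak g)}(H^0(\epsilon_J), H^0(\epsilon_J))$, and then track this structure through the spectral sequences in the proofs of Theorem~\ref{thm:quantumrealization} and Proposition~\ref{quantumExtcalculation}. Both are Grothendieck-type spectral sequences arising from compositions of derived functors of $\operatorname{Hom}$, and each is multiplicative when the input module is an algebra object. Combined with the Ginzburg--Kumar calculation $\operatorname{H}^\bullet(u_\zeta(\mathfrak u_J),\mathsf{k}) \cong S^\bullet(\mathfrak u_J^*)$ \emph{as an algebra}, this pins down the $E_2$-page $\operatorname{H}^\bullet(u_\zeta(\mathfrak p_J), A_\mathcal{O}) \cong S^\bullet(\mathfrak u_J^*)$ as $P_J$-algebras. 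The collapse of the induction spectral sequence \eqref{spectralseq11} established in the proof of Theorem~\ref{thm:quantumrealization} then transports this $P_J$-algebra structure through induction to produce the required isomorphism of $G$-algebras $\operatorname{H}^{2\bullet}(u_\zeta(\mathfrak g),A_\mathcal{O}) \cong \operatorname{ind}_{P_J}^G S^\bullet(\mathfrak u_J^*)$.

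Finally, the remaining identifications in part~(b) are geometric. The isomorphism $\operatorname{ind}_{P_J}^G S^\bullet(\mathfrak u_J^*) \cong \mathsf{k}[G\times_{P_J}\mathfrak u_J]$ is the standard description of global sections of the bundle of coordinate rings on the flag variety $G/P_J$, and $\mathsf{k}[G\times_{P_J}\mathfrak u_J] \cong \mathsf{k}[\mathcal{O}]$ follows from the Richardson property of $\mathcal O$ together with the fact that the complement of $\mathcal O$ in $\overline{\mathcal O}$ is a union of nilpotent orbits of codimension at least two. Under the additional hypotheses that $\Gamma$ is a resolution of singularities and $\overline{\mathcal{O}}$ is normal, Zariski's main theorem (together with properness of $\Gamma$) yields $\Gamma_* \mathcal{O}_{G\times_{P_J}\mathfrak u_J} = \mathcal{O}_{\overline{\mathcal O}}$, whence $\mathsf{k}[G\times_{P_J}\mathfrak u_J] \cong \mathsf{k}[\overline{\mathcal O}]$, completing the final assertion. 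I expect the principal technical hurdle to be the multiplicativity step—tracing the ring structure rigorously through both the LHS spectral sequence computing $\operatorname{H}^\bullet(u_\zeta(\mathfrak p_J),A_\mathcal{O})$ and the induction spectral sequence—rather than the essentially formal reduction to Theorem~\ref{thm:quantumnonshiftrealization}.
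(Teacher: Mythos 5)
Your proposal takes the same route as the paper: both set $A_{\mathcal O}=\operatorname{End}_{\bfk}(H^0(\epsilon_J))\cong H^0(\epsilon_J)^*\otimes H^0(\epsilon_J)$, identify this with $N_{\operatorname{id}}$ for $\gamma=0$ via transitivity of induction, verify the hypotheses of Theorem~\ref{thm:quantumnonshiftrealization} (condition (a) being automatic for $w=\operatorname{id}$, condition (b) trivial as $\epsilon_J\in X^+$), and then invoke the geometric identifications and normality for the final assertion. One point worth flagging: the paper's own proof merely asserts that ``in this specific setting the isomorphisms are now as rational $G$-algebras,'' while you correctly recognize that tracking the ring structure through the LHS and induction spectral sequences is a genuine verification step that is needed and not completely routine; your sketch of how to do it (multiplicativity of Grothendieck spectral sequences for algebra objects, compatibility of cup and Yoneda products, and the Ginzburg--Kumar identification of $\operatorname{H}^\bullet(u_\zeta(\mathfrak u_J),\bfk)$ as an algebra) fills a gap the paper leaves implicit. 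One small caveat: your justification of $\bfk[G\times_{P_J}\mathfrak u_J]\cong\bfk[\mathcal O]$ via ``codimension at least two'' is not by itself a complete argument, since the codimension-two/Hartogs reasoning compares $\bfk[\overline{\mathcal O}]$ with $\bfk[\mathcal O]$ under normality, not $\bfk[G\times_{P_J}\mathfrak u_J]$ with $\bfk[\mathcal O]$ directly; the paper defers this to \cite[\S 3.5]{BNPP}, and in your write-up it would be cleaner to do the same rather than improvise the codimension argument.
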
 

\begin{proof} We apply Theorem~\ref{thm:quantumnonshiftrealization} 
in the case when $w=\text{id}$ and $\gamma=0$. Then 
if we set 
$A_{\mathcal O}:=Q_{w}=
H^{0}(\epsilon_{J})^{*}\otimes H^{0}(\epsilon_{J})
\cong \text{End}_{\bfk}(H^{0}(\epsilon_J))$ 
which is a 
$U_{\zeta}({\mathfrak g})$-algebra. 
The statements (a) and (b) now follow by
Theorem~\ref{thm:quantumnonshiftrealization} by 
noting that in this specific setting the 
isomorphisms are now as rational $G$-algebras. 

For the last statement of the theorem, the conditions that $\Gamma$ is a resolution of singularities and $\overline{\mathcal O}$ is  
normal imply that ${\bfk}[\overline{\mathcal O}]\cong {\bfk}[{\mathcal O}]$ (cf. \cite[\S 3.5]{BNPP}). 
\end{proof} 

We remark that we have assumed that $\bfk$ is of characteristic 0 and not necessarily algebraically closed. Since the group $G$ is assumed to be defined and split over $\bfk$, the Richardson orbit is defined over any fields and thus the statement of the theorem makes sense. Friedlander and Parshall's question is for algebraic groups in positive characteristic. However,  the question of which closures of nilpotent orbits are normal is still open in types $E_{7}$ and $E_{8}$. For type $A_{n}$ 
all nilpotent orbit closures are normal and for types $B_{n}$, $C_{n}$, $D_{n}$, $E_{6}$, $F_{2}$, and $G_{2}$ this question 
has been resolved (cf. \cite{KP}, \cite{So1, So2}, \cite{Br1}). 

Let $\Phi_{\lambda}=\{\alpha\in \Phi:\ \langle \lambda+\rho,\alpha^{\vee}\rangle\in \ell{\mathbb Z}\}$. 
Under the condition that $(\ell,p)=1$ for any bad prime $p$ of $\Phi$, one can find $w\in W$ such 
that $w(\Phi_{\lambda})=\Phi_{J}$. Moreover, one can apply \cite[Theorem 1.3.5]{BNPP} to see that 
the support variety of $A_{\mathcal O}$ is the closure of the Richardson orbit associated to $J$ 
(i.e.,  ${\mathcal V}_{\mathfrak g}(A_{\mathcal O})=G\cdot {\mathfrak u}_{J}$). In general 
the rate of growth of $\dim\operatorname{H}^{p}(u_{\zeta}({\mathfrak g}),A_{\mathcal O})$ will equal the 
$\dim G\cdot {\mathfrak u}_{J}$. A priori, $\operatorname{H}^{\bullet}(u_{\zeta}({\mathfrak g}),
A_{\mathcal O})$ could be a non-commutative algebra, but Theorem~\ref{thm:quantumrealizereg} demonstrates that the Yoneda product in this case is commutative.  

\subsection{} When $\Phi=A_{n-1}$, the $G=GL_{n}$-orbits are labelled by partitions of $n$. 
If $\lambda$ is a partition of $n$, let $x_{\lambda}$ be the nilpotent matrix in 
$\mathfrak g=\mathfrak{gl}_{n}$ 
with Jordan blocks with sizes matching up with the parts of $\lambda$. Set ${\mathcal O}_{\lambda}=
G\cdot x_{\lambda}$. Given $J\subseteq \Delta$, one can associate a partition $\sigma(J)$ 
to the subroot system generated by $J$ (cf. \cite[Section 4.5]{NPV}). According to \cite{Kr}, 
$$\overline{{\mathcal O}_{\sigma(J)^{t}}}=G\cdot {\mathfrak u}_{J}.$$ 
Here $(-)^{t}$ denotes the transposed partition. 

The centralizers of nilpotent elements under $G$ are connected so $\Gamma$ is a desingularization. 
Moreover, from \cite{KP} all $G$-orbits have normal orbit closures. 
Consequently, Theorem~\ref{thm:quantumrealizereg} can be stated in terms of partitions indexing the various nilpotent orbits. 

\begin{corollary} 
\label{cor:4.4.1} Let ${\mathfrak g}=\mathfrak{gl}_{n}$, ${\ell}>n$ and $\lambda$ be a 
partition of $n$. Let $J$ be a subset of simple roots corresponding to the partition 
$\lambda^{t}$. Then there exists a finite dimensional $U_{\zeta}({\mathfrak g})$-algebra $A_{\mathcal O_{\lambda}}$ 
such that 
\begin{itemize} 
\item[(a)] $\operatorname{H}^{2\bullet+1}(u_{\zeta}({\mathfrak g}),A_{{\mathcal O}_{\lambda}})=0$; 
\item[(b)] $\operatorname{H}^{2\bullet}(u_{\zeta}({\mathfrak g}),A_{{\mathcal O}_{\lambda}})
\cong{\bfk}[G\cdot {\mathfrak u}_{J}]\cong{\bfk}[\overline{\mathcal O}]$.  
\end{itemize} 
\end{corollary}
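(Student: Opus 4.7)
The plan is to deduce Corollary~\ref{cor:4.4.1} as a direct specialization of Theorem~\ref{thm:quantumrealizereg} to the type $A$ setting, with the special combinatorics of $GL_n$ doing the work of verifying the two geometric hypotheses (that $\Gamma$ is a resolution and that $\overline{\mathcal{O}}$ is normal). The hypothesis $\ell > n = h$ for $\mathfrak{gl}_n$ puts us in the regime where Theorem~\ref{thm:quantumrealizereg} applies, so I will need only to translate its conclusion into the partition language.

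First I would fix the partition $\lambda$ of $n$ and choose $J \subseteq \Delta$ corresponding to the transposed partition $\lambda^t$ in the standard way (blocks of consecutive simple roots of sizes $\lambda^t_i - 1$). Then, following \cite{Kr} as cited in the excerpt, the Richardson orbit associated to $J$ is exactly $\mathcal{O}_{\sigma(J)^t}$, and taking $\sigma(J) = \lambda^t$ gives $\sigma(J)^t = \lambda$, hence $\overline{\mathcal{O}_\lambda} = G \cdot \mathfrak{u}_J$. This identifies the $P_J$-orbit closure in the Springer-type picture with the closure of $\mathcal{O}_\lambda$ itself.

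Next I would verify the two refinements needed for the last sentence of Theorem~\ref{thm:quantumrealizereg}. In type $A$, centralizers of nilpotent elements in $GL_n$ are connected, which implies that the moment map $\Gamma \colon G \times_{P_J} \mathfrak{u}_J \to \overline{\mathcal{O}_\lambda}$ is birational onto its image; combined with properness, $\Gamma$ is a resolution of singularities. Moreover, by Kraft--Procesi \cite{KP}, every nilpotent orbit closure in $\mathfrak{gl}_n$ is normal. Thus both hypotheses of the final clause of Theorem~\ref{thm:quantumrealizereg} hold, yielding $\bfk[\overline{\mathcal{O}_\lambda}] \cong \bfk[\mathcal{O}_\lambda] \cong \bfk[G \cdot \mathfrak{u}_J]$.

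Finally, I would set $A_{\mathcal{O}_\lambda} := \mathrm{End}_{\bfk}(H^0(\epsilon_J)) \cong H^0(\epsilon_J)^* \otimes H^0(\epsilon_J)$, exactly as in the proof of Theorem~\ref{thm:quantumrealizereg}, and invoke parts (a) and (b) of that theorem directly to get the stated vanishing in odd degrees and the identification in even degrees with $\mathrm{ind}_{P_J}^G S^\bullet(\mathfrak{u}_J^*) \cong \bfk[G \cdot \mathfrak{u}_J] \cong \bfk[\overline{\mathcal{O}_\lambda}]$. There is no real obstacle here: the only thing to watch is that the assumption $\ell > h$ in Theorem~\ref{thm:quantumrealizereg} becomes $\ell > n$ for $\mathfrak{gl}_n$ (the Coxeter number of type $A_{n-1}$), which matches the hypothesis of the corollary, and that the dictionary $J \leftrightarrow \lambda^t$ is consistent with Kraft's identification; both are purely combinatorial checks.
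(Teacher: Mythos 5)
Your proposal is correct and follows essentially the same route the paper takes: both specialize Theorem~\ref{thm:quantumrealizereg} to type $A$ by invoking Kraft's identification $\overline{\mathcal{O}_{\sigma(J)^t}} = G \cdot \mathfrak{u}_J$, the connectedness of centralizers to get that $\Gamma$ is a resolution, and Kraft--Procesi normality to conclude $\bfk[\overline{\mathcal{O}_\lambda}] \cong \bfk[G \cdot \mathfrak{u}_J]$. The only difference is one of exposition: you spell out the birationality-plus-properness step and the combinatorial dictionary $J \leftrightarrow \lambda^t$ somewhat more explicitly than the paper does.
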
 

\section{Shifted realization of the coordinate ring of the twisted cotangent bundle} 

\subsection{} In this section we will provide a computation involving the cohomology of certain tilting modules for 
$U_{\zeta}({\mathfrak l}_{J})$ and demonstrate that this yields another realization of the coordinate ring of the 
twisted cotangent bundle. This result is 
quite striking in the sense that the 
computation of the characters of the tilting 
modules is only known conjecturally or via 
$p$-Kazhdan-Lusztig polynomials 
(cf. \cite[E.10 (3) Conjecture]{Jan1} 
and \cite[Theorem 7.6]{AMRW}), 
yet we will show that without 
assuming knowledge about the  
character formula one 
can still compute non-trivial cohomology. 

For $\mu\in X_{J}^{+}$, let $T_{J}(\mu)$ be the unique indecomposable tilting $U_{\zeta}({\mathfrak l}_{J})$-module with highest weight $\mu$. recall $\epsilon_{J}=\sum_{\alpha\in J}({\ell}-1)\omega_{\alpha}$. 
Let $\mu\in X_{J,\text{res}}$ be the set of
$\ell$-restricted weights of $X$. Using the
work of Pillen \cite[Section 2, Corollary A]{P}, 
one can show that for $\mu\in X_{J,\text{res}}$ 
the tilting module 
$T_{J}(\epsilon_{J}-w_{0,J}\epsilon_{J}+w_{0,J}\mu)$ 
can be realized as a multiplicity one
$U_{\zeta}({\mathfrak l}_{J})$-summand in
$L_{J}(\epsilon_{J})\otimes L_{J}(-w_{0,J}\epsilon_{J}+w_{0,J}\mu).$  
Set $\mu_{J}=-w_{0,J}\epsilon_{J}+w_{0,J}
\mu.$ We also review the important fact 
(cf. \cite[ II 10.15 Lemma]{Jan1}) 
that 
\begin{equation} 
\label{importantHomidentity}
\text{Hom}_{u_{\zeta}({\mathfrak l}_{J})}(L(\mu),L_{J}(\epsilon_{J})\otimes L_{J}(\mu_{J}))\cong 
\text{Hom}_{U_{\zeta}({\mathfrak l}_{J})}(L(\mu),L_{J}(\epsilon_{J})\otimes L_{J}(\mu_{J})).
\end{equation} 

\begin{theorem}\label{thm:quantumshiftrealization} Let ${\ell}>h$ and $J\subseteq \Delta$. Set $\sigma_{w,J}=
\epsilon_{J}-w_{0,J}\epsilon_{J}+w_{0,J}(w\cdot 0)$ for  $w\in {}^{J}W$.
Moreover, let $\gamma\in X^{+}$ such that all weights of $\widehat{T}:=T_{J}(\sigma_{w,J})\otimes \bfk_{\ell\gamma}$ are in $X^{+}$.
Then there exists an isomorphism of rational $G$-modules:
\begin{equation}
\operatorname{H}^{n}(u_{\zeta}({\mathfrak g}),\operatorname{ind}_{U_{\zeta}({\mathfrak p}_{J})}^{U({\mathfrak g})} \widehat{T}) \cong
\begin{cases} \operatorname{ind}_{P_{J}}^{G} S^{\frac{n-l(w)}{2}}({\mathfrak u}^{*}_{J}) \otimes \bfk_{\gamma} & n-l(w) \equiv 0 \ (\operatorname{mod }2) ,  \\
0 & \text{otherwise}. 
\end{cases} 
\end{equation} 

\end{theorem}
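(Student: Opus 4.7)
The plan is to apply Theorem~\ref{thm:quantumrealization} with $M=\bfk$ (the trivial module) and $Q=T_J(\sigma_{w,J})$, viewed as a $U_\zeta(\mathfrak{p}_J)$-module by inflation along the surjection $U_\zeta(\mathfrak{p}_J)\to U_\zeta(\mathfrak{l}_J)$. With these choices, $M\otimes \operatorname{ind}_{U_\zeta(\mathfrak{p}_J)}^{U_\zeta(\mathfrak{g})}(Q\otimes \bfk_{\ell\gamma}) \cong \operatorname{ind}_{U_\zeta(\mathfrak{p}_J)}^{U_\zeta(\mathfrak{g})}\widehat{T}$, the hypothesis $Z_J^+\cdot\bfk=0$ is automatic, and the conclusion of Theorem~\ref{thm:quantumrealization} with $t=l(w)$ will yield precisely the claimed isomorphism. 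Two ingredients must be verified: that $Q$ is $u_\zeta(\mathfrak{l}_J)$-injective, and that conditions (i) and (ii) of Theorem~\ref{thm:quantumrealization} hold.

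Injectivity of $Q$ over $u_\zeta(\mathfrak{l}_J)$ follows from the Pillen-type result recalled in the excerpt: applied with $\mu=w\cdot 0\in X_{J,\text{res}}$ (valid since $\ell>h$), it exhibits $T_J(\sigma_{w,J})$ as a multiplicity-one $U_\zeta(\mathfrak{l}_J)$-summand of $L_J(\epsilon_J)\otimes L_J(\mu_J)$. Since the Steinberg module $L_J(\epsilon_J)$ is $u_\zeta(\mathfrak{l}_J)$-projective (hence injective), so is the tensor product, and therefore so is its direct summand $T_J(\sigma_{w,J})$. For condition (ii) I would factor $\operatorname{ind}_{U_\zeta(\mathfrak{p}_J)}^{U_\zeta(\mathfrak{g})}$ through $\operatorname{ind}_{U_\zeta(\mathfrak{b})}^{U_\zeta(\mathfrak{g})}$ and use the generalized tensor identity together with a good $U_\zeta(\mathfrak{l}_J)$-filtration of the tilting module $T_J(\sigma_{w,J})$; the assumption that all weights of $\widehat{T}$ lie in $X^+$ then provides the dominance needed to invoke Kempf's vanishing theorem on each graded piece, just as in the proof of Theorem~\ref{thm:quantumnonshiftrealization}.

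The main obstacle is verifying condition (i) with $t=l(w)$. Since $\ell>h$, \cite[Theorem~6.4.1]{UGAVIGRE2} gives the $u_\zeta(\mathfrak{l}_J)$-isomorphism $\operatorname{H}^n(\mathcal{U}_\zeta(\mathfrak{u}_J),\bfk)\cong \bigoplus_{y\in{}^JW,\, l(y)=n}L_J(y\cdot 0)^*$, so condition (i) reduces to showing that $\operatorname{Hom}_{u_\zeta(\mathfrak{l}_J)}(L_J(y\cdot 0),T_J(\sigma_{w,J}))$ is isomorphic to $\bfk$ when $y=w$ and vanishes when $y\neq w$, for each $y\in{}^JW$. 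The key tool is \eqref{importantHomidentity}, which applies because every $y\cdot 0$ is $\ell$-restricted in the bottom alcove. Combined with the Pillen decomposition, the inclusion $T_J(\sigma_{w,J})\hookrightarrow L_J(\epsilon_J)\otimes L_J(\mu_J)$ (taken with $\mu=w\cdot 0$) yields
\begin{equation*}
\operatorname{Hom}_{u_\zeta(\mathfrak{l}_J)}(L_J(y\cdot 0),T_J(\sigma_{w,J})) \hookrightarrow \operatorname{Hom}_{U_\zeta(\mathfrak{l}_J)}(L_J(y\cdot 0),L_J(\epsilon_J)\otimes L_J(\mu_J)),
\end{equation*}
and the right-hand side can be rewritten by adjunction as $\operatorname{Hom}_{U_\zeta(\mathfrak{l}_J)}(L_J(y\cdot 0)\otimes L_J(\epsilon_J-w\cdot 0),L_J(\epsilon_J))$. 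A weight-dominance analysis in the spirit of Theorem~\ref{thm:quantumnonshiftrealization} (tracking pairings against $\alpha_0^\vee$ modulo $\ell$) shows that a Steinberg quotient can appear only when $y=w$, while the multiplicity-one assertion of Pillen guarantees that the resulting one-dimensional contribution is entirely captured by the summand $T_J(\sigma_{w,J})$ rather than by its complement. With condition (i) in hand, Theorem~\ref{thm:quantumrealization} delivers the stated isomorphism.
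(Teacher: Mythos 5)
Your overall strategy matches the paper's: apply Theorem~\ref{thm:quantumrealization} with $M=\bfk$ and $Q=T_J(\sigma_{w,J})$ inflated to $U_\zeta(\mathfrak p_J)$, derive $u_\zeta(\mathfrak l_J)$-injectivity of $Q$ from the Pillen decomposition inside $L_J(\epsilon_J)\otimes L_J(\mu_J)$, and dispose of condition (ii) via the dominance hypothesis on the weights of $\widehat T$. Both proofs also reach the point where condition (i) reduces, via \cite[Theorem 6.4.1]{UGAVIGRE2} and Lemma~\ref{coh-direct-summand}, to analyzing $\operatorname{Hom}_{u_\zeta(\mathfrak l_J)}(L_J(y\cdot 0)\otimes L_J(\nu)^{[1]},Q_w)$ for $y\in{}^JW$.

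Where you diverge is in how condition (i) is finished. The paper observes that $L_J(w\cdot 0)$ sits in the $u_\zeta(\mathfrak l_J)$-socle (and head) of $Q_w$ and invokes the \emph{linkage principle} for the indecomposable tilting module: any nonvanishing forces $y\cdot 0+\ell\nu$ to be $W_{J,\ell}$-linked to $w\cdot 0$, yielding $y\cdot 0=(xw)\cdot 0+\ell(\sigma-\nu)$, and then the weight estimates of Theorem~\ref{thm:quantumnonshiftrealization} (or \cite[Lemmas 2.1.1, 2.1.2]{DNP}) give $\sigma=\nu$, $y=xw$, hence $y=w$, $x=\operatorname{id}$. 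Finally \eqref{importantHomidentity} combined with multiplicity-one in the $u_\zeta(\mathfrak l_J)$-socle of $L_J(\epsilon_J)\otimes L_J(\mu_J)$ forces $\nu=0$. You instead propose to embed $T_J(\sigma_{w,J})$ into $L_J(\epsilon_J)\otimes L_J(\mu_J)$, pass to $\operatorname{Hom}_{U_\zeta(\mathfrak l_J)}$ via \eqref{importantHomidentity}, rewrite by adjunction, and close with a weight-dominance estimate. The two routes share \eqref{importantHomidentity} and the weight-bound technique, but the paper's use of linkage for the indecomposable $Q_w$ does two jobs at once (it forces $y=w$ and, via the socle multiplicity, $\nu=0$), whereas your embedding argument handles the $\nu$-part cleanly but leaves the vanishing for $y\neq w$ at the level of a sketch. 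In particular, the claim that $\operatorname{Hom}_{U_\zeta(\mathfrak l_J)}\bigl(L_J(y\cdot 0)\otimes L_J(\epsilon_J-w\cdot 0),L_J(\epsilon_J)\bigr)=0$ for $y\neq w$, $y\in{}^JW$, is not immediate from a single pairing with $\alpha_0^\vee$ mod $\ell$ and would need the full two-variable weight analysis (tracking $y\cdot 0=(xw)\cdot 0+\ell(\sigma-\nu)$, then eliminating the $\ell$-multiple) that the linkage step packages for you. Likewise, your final appeal to Pillen's multiplicity-one to ensure the $y=w$ contribution lives in the summand $T_J(\sigma_{w,J})$ rather than its complement is the right intuition, but the paper's formulation — $L_J(w\cdot 0)$ lies in the $u_\zeta(\mathfrak l_J)$-socle of $Q_w$, and exactly once in that of the ambient tensor product — makes this step precise. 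So: same skeleton, a genuinely different (and workable) mechanism for condition (i), but your sketch of the vanishing for $y\neq w$ and of the socle-capture for $y=w$ needs to be fleshed out to the level of rigor the linkage argument delivers.
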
 

\begin{proof} Let $M={\bfk}$ and $Q_w=T_{J}(\epsilon_{J}-w_{0,J}\epsilon_{J}+w_{0,J}(w\cdot 0))$. Recall that if ${\ell}>h$ then $w\cdot 0\in X_{J,\text{res}}$ by 
\cite[Prop. 3.6.1]{UGAVIGRE1}. Therefore, 
$Q_w$ is a $U_{\zeta}({\mathfrak l}_{J})$-summand 
in $L_{J}(\epsilon_{J})\otimes L_{J}(-w_{0,J}\epsilon_{J}+w_{0,J}(w\cdot 0))$, 
and thus $Q_w$ is injective over
$u_{\zeta}({\mathfrak l}_{J})$.  
The statement of the theorem will follow if
we can verify conditions (i) and (ii) of
Theorem~\ref{thm:quantumrealization}. 

Condition (ii) holds because we have 
assumed that all weights of
$T_{J}(\epsilon_{J}-w_{0,J}\epsilon_{J}+w_{0,J}(w\cdot 0))\otimes \bfk_{\ell\gamma}$ 
are in $X^{+}$. Thus, we only need to verify condition (i) which is 
equivalent to showing that 
$$
\operatorname{Hom}_{u_{\zeta}({\mathfrak l}_{J})}(L_{J}(\nu)^{[1]},\operatorname{H}^{n}({\mathcal U}_{\zeta}({\mathfrak u}_{J}),{\bfk})  \otimes  Q_w)\cong 
\begin{cases} {\bfk} & n=l(w)\ \text{and}\ \nu=0   \\
0 & \text{otherwise}.  
\end{cases} 
$$ 
If this Hom-space is non-zero, by Lemma~\ref{coh-direct-summand} there exists $y\in {}^J W$ with $l(y)=n$ such that 
$\operatorname{Hom}_{u_{\zeta}({\mathfrak l}_{J})}(L_{J}(y\cdot 0)\otimes L_{J}(\nu)^{[1]},Q_w)\neq 0$. 
The simple $U_{\zeta}({\mathfrak l}_{J})$-module $L_{J}(w\cdot 0)$ appears in the socle (and head) of $Q_w$. 
This implies that $y\cdot 0+{\ell}\nu$ is linked to $w\cdot 0$ under the action of the affine Weyl group associated to $\Phi_{J}$. 
Therefore, 
$$y\cdot 0+{\ell}\nu=x\cdot (w\cdot 0)+{\ell} \sigma.$$ 
where $\sigma\in {\mathbb Z}\Phi_{J}$ and $x\in W_{J}$. Rewriting this equation we have 
$$y\cdot 0=(xw)\cdot 0+{\ell}(\sigma-\nu).$$
We can use the arguments given in the proof of Theorem~\ref{thm:quantumnonshiftrealization} to conclude that $\sigma=\nu$ and $y=xw$ (or see \cite[Lemmas 2.1.1 and 2.1.2]{DNP}). But 
$y$ and $w$ are in $^JW$ so $y=w$, and $x=\text{id}$. 

Now by (\ref{importantHomidentity}) and the fact that $L_{J}(w\cdot 0)$ appears in the $U_{\zeta}({\mathfrak l}_{J})$-socle and the 
$u_{\zeta}({\mathfrak l}_{J})$-socle of $L_{J}(\epsilon_{J})\otimes L_{J}(-w_{0,J}\epsilon_{J}+w_{0,J}(w\cdot 0))$ exactly once we 
can conclude that $L_{J}(w\cdot 0+l\nu)$ is a composition factor in $Q_w$ only if $\nu=0$. This verifies condition (i). 
\end{proof}

\subsection{} We will first indicate how Theorem~\ref{thm:quantumshiftrealization}  can be viewed as a quantum analog of the  computation of the 
cohomology of the first Frobenius kernel with coefficients in an induced module. This result was first proved by Andersen and Jantzen in most cases 
and by Kumar, Lauritzen, and Thomsen in general (cf. \cite[Corollary 3.7(b)]{AJ} \cite[Theorem 8]{KLT}). 

\begin{corollary} If ${\ell}>h$ and $w\in W$ such that $w\cdot 0+{\ell}\nu\in X^{+}$.  Then there exists an isomorphism of rational $G$-modules 
\begin{equation} 
\operatorname{H}^{n}(u_{\zeta}({\mathfrak g}),\operatorname{ind}_{U_{\zeta}({\mathfrak b})}^{U_{\zeta}({\mathfrak g})} [\bfk_{w\cdot 0+\ell\gamma}])\cong
\begin{cases} \operatorname{ind}_{B}^{G} S^{\frac{n-l(w)}{2}}({\mathfrak u}^{*})\otimes \bfk_{\gamma} & \text{ if } n-l(w)\equiv 0 \ (\on{mod }2), \\
0 & \text{otherwise}. 
\end{cases} 
\end{equation} 
\end{corollary}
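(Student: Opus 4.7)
The plan is to deduce this corollary as the $J=\varnothing$ specialization of Theorem~5.1.1. I first unpack what each ingredient becomes when the parabolic is the Borel itself. With $J=\varnothing$, we have $P_\varnothing = B$, $L_\varnothing = T$, $\mathfrak{l}_\varnothing = \mathfrak{h}$, $\mathfrak{u}_\varnothing = \mathfrak{u}$, $W_\varnothing = \{e\}$ (so $w_{0,\varnothing}=e$ and ${}^\varnothing W = W$), and $\epsilon_\varnothing = 0$ as the empty sum. Under these identifications, $\sigma_{w,\varnothing} = \epsilon_\varnothing - w_{0,\varnothing}\epsilon_\varnothing + w_{0,\varnothing}(w\cdot 0) = w\cdot 0$.

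Next I would observe that the indecomposable tilting module $T_\varnothing(w\cdot 0)$ for $U_\zeta(\mathfrak{l}_\varnothing) = U_\zeta^0$ is just the one-dimensional weight module $\bfk_{w\cdot 0}$: any integrable type-$\mathsf 1$ module for the torus part decomposes into weight spaces, and one-dimensional weight modules are trivially tilting with the prescribed highest weight. Consequently
\[
\widehat T \;=\; T_\varnothing(\sigma_{w,\varnothing})\otimes \bfk_{\ell\gamma} \;=\; \bfk_{w\cdot 0}\otimes \bfk_{\ell\gamma} \;=\; \bfk_{w\cdot 0 + \ell\gamma},
\]
and $\operatorname{ind}_{U_\zeta(\mathfrak{p}_\varnothing)}^{U_\zeta(\mathfrak{g})}\widehat T = \operatorname{ind}_{U_\zeta(\mathfrak{b})}^{U_\zeta(\mathfrak{g})}\bfk_{w\cdot 0+\ell\gamma}$, matching the left-hand side of the corollary.

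The dominance hypothesis of Theorem~5.1.1, which asks that \emph{every} weight of $\widehat T$ lie in $X^+$, collapses to the single condition $w\cdot 0+\ell\gamma\in X^+$ because $\widehat T$ has only one weight. This is exactly the hypothesis of the corollary (reading the printed $\nu$ as $\gamma$). Feeding all of this into the conclusion of Theorem~5.1.1, and noting that on the right-hand side $S^{\bullet}(\mathfrak{u}^{*}_{\varnothing}) = S^{\bullet}(\mathfrak{u}^{*})$ and $\operatorname{ind}_{P_\varnothing}^G = \operatorname{ind}_{B}^G$, yields
\[
\operatorname{H}^{n}\!\bigl(u_{\zeta}(\mathfrak{g}),\operatorname{ind}_{U_{\zeta}(\mathfrak{b})}^{U_{\zeta}(\mathfrak{g})}\bfk_{w\cdot 0+\ell\gamma}\bigr) \;\cong\; \operatorname{ind}_{B}^{G} S^{(n-\ell(w))/2}(\mathfrak{u}^{*})\otimes \bfk_{\gamma}
\]
when $n-\ell(w)$ is even, and zero otherwise, which is the desired statement.

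There is no genuine obstacle here: every step of Theorem~5.1.1's proof (verification of the two hypotheses of Theorem~3.3.1, analysis via $^JW$-representatives, use of the Steinberg-type decomposition, and the tilting injectivity of $Q_w$) becomes trivial or vacuous in the torus case. The only minor point worth flagging is ensuring that $w\cdot 0$ is in the "bottom alcove" of $X_{J,\mathrm{res}}$ required in the proof of Theorem~5.1.1; for $J=\varnothing$ this is automatic since $U_\zeta^0$ has a trivial linkage structure, so no further restriction beyond $\ell>h$ (which is assumed) is needed.
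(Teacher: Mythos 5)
Your proof is correct and takes essentially the same approach as the paper: the paper's argument is exactly the $J=\varnothing$ specialization of Theorem~\ref{thm:quantumshiftrealization}, recording that $\mathfrak p_\varnothing=\mathfrak b$, $\epsilon_\varnothing=0$, $w_{0,\varnothing}=\operatorname{id}$, and $T_\varnothing(w\cdot 0)=\bfk_{w\cdot 0}$. Your extra observations (that the one-weight module is trivially tilting, that the dominance condition collapses to $w\cdot 0+\ell\gamma\in X^+$, and that the $\nu$ in the statement is a misprint for $\gamma$) are all accurate elaborations of the same argument.
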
 

\begin{proof} The statement of the corollary is a special case of Theorem~\ref{thm:quantumshiftrealization} in the case when $J=\varnothing$. Then ${\mathfrak p}_{J}={\mathfrak b}$, 
$\epsilon_{J}=0$, $w_{0,J}=\text{id}$, and $w\in W$. Then $T_{J}(\epsilon_{J}-w_{0,J}\epsilon_{J}+w_{0,J}(w\cdot 0))=w\cdot 0$. 
\end{proof}

\section{Connections with cohomology for quantum groups}

\subsection{} In this section we will indicate the strong interplay between the structure of geometric objects in the setting of complex Lie theory and homological information 
for modules over the quantum group. The first result of this section connects the multiplicities of simple $G$-modules in the 
global sections of the twisted cotangent bundle with the cohomology for modules over the large quantum group. 

\begin{theorem}\label{thm:cohoquantumGcompfactors} Let $J\subseteq \Delta$, $M$ be a $U_{\zeta}({\mathfrak g})$-module with $Z_{J}.M=0$ and $Q$ be a $U_{\zeta}({\mathfrak p}_{J})$-module such that 
$Q$ is an injective $u_{\zeta}({\mathfrak l}_{J})$-module which is trivial as $U_{\zeta}({\mathfrak u}_{J})$-module. Assume that the following 
two conditions hold: 
\begin{itemize} 
\item[(i)] As a rational $B$-module,   
\begin{equation}
\operatorname{Hom}_{u_{\zeta}({\mathfrak l}_{J})}({\bfk},\operatorname{H}^{n}({\mathcal U}_{\zeta}({\mathfrak u}_{J}), M)\otimes Q)\cong
\begin{cases} k & \text{ if } n=t, \\
0 & \text{otherwise.} 
\end{cases} 
\end{equation} 
\item[(ii)] For $\gamma\in X_{P_{J}}\cap X^{+}$, $R^{i}\operatorname{ind}_{U_{\zeta}({\mathfrak p}_{J})}^{U_{\zeta}({\mathfrak g})} (Q \otimes \bfk_{\ell\gamma}) =0$ for $i>0$. 
Then for $n\geq 0$ and $ \sigma\in X^+$, the composition factor multiplicities as $G$-modules are given by
\end{itemize} 
$$[\operatorname{ind}_{P_{J}}^{G}( S^{\frac{n-t}{2}}({\mathfrak u}_{J}^{*})\otimes \bfk_{\gamma}) :L(\sigma)]=
\begin{cases} \dim \on{Ext}^{n-t}_{U_{\zeta}({\mathfrak g})}
(L(\sigma)^{[1]}, \widehat{M} )   & \text{ if } n-t \equiv 0 \ (\operatorname{mod}\ 2),       \\
0   & \text{otherwise} 
\end{cases} 
$$ 
where $\widehat{M}=M\otimes \on{ind}_{U_{\zeta}({\mathfrak p}_{J})}^{U_{\zeta}({\mathfrak g})}(Q \otimes \bfk_{\ell\gamma})$.
\end{theorem}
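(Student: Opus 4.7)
The plan is to combine Theorem \ref{thm:quantumrealization} with the Lyndon--Hochschild--Serre spectral sequence for the normal Hopf subalgebra $u_\zeta(\mathfrak g)\trianglelefteq U_\zeta(\mathfrak g)$, and to exploit the semisimplicity of the category of rational $G$-modules in characteristic zero. The hypotheses (i) and (ii) stated here are exactly those required by Theorem \ref{thm:quantumrealization}, so that result applies immediately to $\widehat M=M\otimes\on{ind}_{U_\zeta(\mathfrak p_J)}^{U_\zeta(\mathfrak g)}(Q\otimes\bfk_{\ell\gamma})$ and yields, as rational $G$-modules,
$$\on H^n\bigl(u_\zeta(\mathfrak g),\widehat M\bigr)\;\cong\;\on{ind}_{P_J}^{G}\bigl(S^{(n-t)/2}(\mathfrak u_J^*)\otimes\bfk_\gamma\bigr)$$
when $n\equiv t\pmod 2$ and $0$ otherwise. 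In particular, the composition multiplicity $\bigl[\on H^n(u_\zeta(\mathfrak g),\widehat M):L(\sigma)\bigr]_G$ is exactly the multiplicity appearing on the left-hand side of the claimed formula.

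Next, invoke the LHS spectral sequence attached to the Hopf-algebra extension $u_\zeta(\mathfrak g)\hookrightarrow U_\zeta(\mathfrak g)\twoheadrightarrow U_\zeta(\mathfrak g)/\!\!/u_\zeta(\mathfrak g)$, together with the Frobenius identification of this quotient with the hyperalgebra of $G$. Applied to $L(\sigma)^{[1]}$ (pulled back from the quotient, hence $u_\zeta(\mathfrak g)$-trivial) and to $\widehat M$, it reads
$$E_2^{p,q}\;=\;\on{Ext}^p_G\!\bigl(L(\sigma),\on H^q(u_\zeta(\mathfrak g),\widehat M)\bigr)\;\Longrightarrow\;\on{Ext}^{p+q}_{U_\zeta(\mathfrak g)}\!\bigl(L(\sigma)^{[1]},\widehat M\bigr).$$
Since $G$ is a complex reductive algebraic group, Weyl's complete reducibility theorem makes the category of rational $G$-modules semisimple, so $E_2^{p,q}=0$ for every $p>0$. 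The spectral sequence therefore collapses on the $p=0$ column, producing
$$\on{Ext}^n_{U_\zeta(\mathfrak g)}\!\bigl(L(\sigma)^{[1]},\widehat M\bigr)\;\cong\;\on{Hom}_G\!\bigl(L(\sigma),\on H^n(u_\zeta(\mathfrak g),\widehat M)\bigr),$$
whose dimension is, by semisimplicity once more, the multiplicity of $L(\sigma)$ in $\on H^n(u_\zeta(\mathfrak g),\widehat M)$. Combining this with the identification from Theorem \ref{thm:quantumrealization} produces the stated formula, while the parity vanishing follows from the corresponding vanishing of the underlying cohomology.

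The principal technical point is the bookkeeping on the $E_2$-page: one must verify that Ext groups over $U_\zeta(\mathfrak g)/\!\!/u_\zeta(\mathfrak g)$ are canonically the rational $G$-Ext groups of the Frobenius-untwisted modules, and that $L(\sigma)^{[1]}$ is the inflation of a genuine simple rational $G$-module. Once these identifications are in place, semisimplicity of the rational $G$-module category forces the collapse, and the remaining arguments are essentially formal. A minor subtlety worth noting is that the indexing shift between the cohomological degree $n$ and the symmetric-power degree $(n-t)/2$ is inherited entirely from Theorem \ref{thm:quantumrealization}; the spectral-sequence step itself preserves degree.
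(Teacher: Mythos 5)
Your proposal is correct and follows the paper's own proof essentially verbatim: both invoke the Lyndon--Hochschild--Serre spectral sequence for $u_\zeta(\mathfrak g)\trianglelefteq U_\zeta(\mathfrak g)$, collapse it using complete reducibility of rational $G$-modules in characteristic zero, and then identify the resulting $\on{Hom}_G$-multiplicity with the cohomology computed in Theorem~\ref{thm:quantumrealization}. The only cosmetic difference is that you apply Theorem~\ref{thm:quantumrealization} at the start rather than at the end.
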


\begin{proof} One can apply the Lyndon-Hochschild-Serre spectral sequence for $u_{\zeta}({\mathfrak g})$, a normal Hopf subalgebra in 
$U_{\zeta}({\mathfrak g})$, to obtain: 
$$E_{2}^{i,j}
=\text{Ext}^{i}_{G}(L(\sigma),\text{Ext}^{j}_{u_{\zeta}({\mathfrak g})}({\bfk},\widehat{M}))
\Rightarrow \text{Ext}^{i+j}_{U_{\zeta}({\mathfrak g})}
(L(\sigma)^{[1]},M\otimes \operatorname{ind}_{U_{\zeta}({\mathfrak p}_{J})}^{U_{\zeta}({\mathfrak g})}(Q \otimes \bfk_{\ell\gamma})).$$ 
The finite-dimensional $G$-modules are completely reducible so there are no higher extensions between $G$-modules. This implies that this spectral sequence collapses and yields: 
\begin{eqnarray*}
[\text{H}^{\bullet}(u_{\zeta}({\mathfrak g}),\widehat{M}):L(\sigma)]
&=&
\dim\on{Hom}_{G}(L(\sigma),\on{H}^{\bullet}(u_{\zeta}({\mathfrak g}),M \otimes  \on{ind}_{U_{\zeta}({\mathfrak p}_{J})}^{U_{\zeta}({\mathfrak g})}(Q \otimes \bfk_{\ell\gamma})))\\
&=& \dim\on{Ext}^{\bullet}_{U_{\zeta}({\mathfrak g})}(L(\sigma)^{[1]},  M\otimes \on{ind}_{U_{\zeta}({\mathfrak p}_{J})}^{U_{\zeta}({\mathfrak g})}(Q \otimes \bfk_{\ell\gamma})).
\end{eqnarray*} 
The result now follows from Theorem~\ref{thm:quantumrealization}. 
\end{proof} 

We note that the multiplicities $[\operatorname{ind}_{P_{J}}^{G} S^{\bullet}({\mathfrak u}_{J}^{*})\otimes \bfk_{\gamma}:L(\sigma)]$ are 
given in terms of classical formulas involving values of Kostant's partition function (cf. \cite[8.18]{Jan3}). 
These values can also be interpreted as certain coefficients of Kazhdan-Lusztig polynomials. 
The theorem above can now be used to give an interpretation of the dimensions of the extension groups 
$$\text{Ext}^{\bullet}_{U_{\zeta}({\mathfrak g})}(L(\sigma)^{[1]},  
M\otimes \operatorname{ind}_{U_{\zeta}({\mathfrak p}_{J})}^{U_{\zeta}({\mathfrak g})}(Q\otimes \bfk_{\ell\gamma}))$$
via values on the Kostant's partition function.

\subsection{} Theorem~\ref{thm:cohoquantumGcompfactors} can be combined with Theorem~\ref{thm:quantumnonshiftrealization} to yield the following result.  

\begin{corollary} \label{cor:nonshiftedextrealization} Let ${\ell}>h$, $J\subseteq \Delta$, $\epsilon_{J}=\sum_{\alpha\in J}({\ell}-1)\omega_{\alpha}$, and $M=H^{0}(\epsilon_J)^{*}$. 
For $w\in {}^JW$ and $\gamma\in X_{P_{J}}\cap X^{+}$, set 
$$N_w(\gamma):=H^{0}(\epsilon_{J})^{*}\otimes  \operatorname{ind}_{U_{\zeta}({\mathfrak p}_{J})}^{U_{\zeta}({\mathfrak g})}( H^{0}_{J}(\epsilon_{J})\otimes 
H^{0}_{J}(-w_{0,J}w\cdot 0)\otimes \bfk_{\ell\gamma}),$$
and assume that 
\begin{itemize}
\item[(a)] $\operatorname{Hom}_{u_{\zeta}({\mathfrak l}_{J})}({\bfk},\operatorname{H}^{0}({\mathcal U}_{\zeta}({\mathfrak u}_{J}), M)\otimes Q_w)\cong \bfk$,
\item[(b)] $\epsilon_J-w_{0,J}(w\cdot 0)+\ell\gamma \in X^{+}$.  
\end{itemize} 
Then for $n\geq 0$ 
\begin{equation*}
[\on{ind}_{P_{J}}^{G} (S^{\frac{n}{2}}({\mathfrak u}_{J}^{*})^{[1]}\otimes \bfk_{\gamma}) :L(\sigma)]=
\begin{cases} \dim \on{Ext}^{n}_{U_{\zeta}({\mathfrak g})}
(L(\sigma)^{[1]},  N_w(\gamma))  
&\text{ if } w=\on{id} \text{and}      \\
&\   n \equiv 0 \ (\on{mod }2),   \\
0   & \text{otherwise}.
\end{cases} 
\end{equation*} 
\end{corollary}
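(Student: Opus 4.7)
The plan is to apply Theorem~\ref{thm:cohoquantumGcompfactors} with the exact setup used to prove Theorem~\ref{thm:quantumnonshiftrealization}: take $M = H^{0}(\epsilon_J)^{*}$, $Q = Q_w = H^{0}_{J}(\epsilon_J) \otimes H^{0}_{J}(-w_{0,J}w\cdot 0)$, and shift $t = 0$. First I would observe that the module denoted $\widehat{M}$ in the conclusion of Theorem~\ref{thm:cohoquantumGcompfactors} is, by construction, exactly $N_w(\gamma)$ as defined in this corollary.

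Next, I would verify that hypotheses (i) and (ii) of Theorem~\ref{thm:cohoquantumGcompfactors} hold in this situation. These are verbatim the conditions already checked in the proof of Theorem~\ref{thm:quantumnonshiftrealization}: hypothesis (i) with $t = 0$ follows from assumption (a) of the corollary together with the root-lattice/affine-Weyl-group argument given there, which uses $\ell > h$ to rule out non-vanishing of $\operatorname{Hom}_{u_\zeta(\mathfrak{l}_J)}(\bfk, \operatorname{H}^{n}(\mathcal{U}_\zeta(\mathfrak{u}_J), M) \otimes Q_w)$ for $n > 0$; hypothesis (ii) follows from the spectral sequence reduction to $\operatorname{ind}_{U_\zeta(\mathfrak{b})}^{U_\zeta(\mathfrak{g})}$ via the generalized tensor identity, combined with Kempf's vanishing, which is precisely where assumption (b) (guaranteeing the relevant weights lie in $X^{+}$) enters. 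For $w = \operatorname{id}$, assumption (a) is automatic as already recorded in Theorem~\ref{thm:quantumnonshiftrealization}, so $\ell > h$ together with (b) suffices.

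With both hypotheses in place, applying Theorem~\ref{thm:cohoquantumGcompfactors} with $t = 0$ immediately produces the desired identity: the multiplicity equals $\dim \operatorname{Ext}^{n}_{U_\zeta(\mathfrak{g})}(L(\sigma)^{[1]}, N_w(\gamma))$ in even degrees and vanishes in odd degrees, which matches the statement exactly in the $w = \operatorname{id}$ case.

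I do not anticipate any genuine obstacle: the corollary is essentially a bookkeeping combination of the non-shifted cohomology realization from Theorem~\ref{thm:quantumnonshiftrealization} with the general Ext-to-multiplicity transfer from Theorem~\ref{thm:cohoquantumGcompfactors}. The only two points needing verification are the identification $\widehat{M} = N_w(\gamma)$ and the fact that the shift in this setting is $t = 0$, both of which follow directly from the definitions. Any subtlety would reside in making sure the hypothesis (i) of Theorem~\ref{thm:cohoquantumGcompfactors} is read as the \emph{$B$-module} assertion rather than the stronger $P_J$-module assertion, so that the easier of the two conditions in the non-shifted theorem is what must be invoked.
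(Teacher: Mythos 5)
Your proposal is correct and is essentially the same as the paper's, which simply cites the combination of Theorem~\ref{thm:cohoquantumGcompfactors} with Theorem~\ref{thm:quantumnonshiftrealization} without spelling out the details; you correctly identify $\widehat{M}=N_w(\gamma)$, take $t=0$, and reuse the verification of hypotheses (i) and (ii) already carried out in the proof of Theorem~\ref{thm:quantumnonshiftrealization}. One small imprecision: the argument ruling out non-vanishing of $\operatorname{Hom}_{u_\zeta(\mathfrak{l}_J)}(\bfk,\operatorname{H}^n(\mathcal{U}_\zeta(\mathfrak{u}_J),M)\otimes Q_w)$ for $n>0$ uses $\ell>2h-1$ for general $w\in{}^JW$ and only reduces to $\ell>h$ in the $w=\operatorname{id}$ case, which is the case relevant to the nonzero branch of the corollary's conclusion anyway.
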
 

As a special case of the result above one sees that the multiplicities of simple $G$-modules in 
the graded coordinate algebra of (conical) orbit closures are directly related to the cohomology of quantum groups. 

\begin{corollary}\label{cor:cohoquantum} 
Let ${\ell}>h$ and ${\mathcal O}:={\mathcal O}_{J}$ be a Richardson orbit in ${\mathcal N}$ with 
moment map $\Gamma:G\times_{P_J} {\mathfrak u}_{J}\rightarrow G\cdot {\mathfrak u}_{J}$. Set 
$\epsilon_{J}=({\ell}-1)\sum_{\alpha\in J}\omega_{\alpha}$. Then 
\begin{itemize}
\item[(a)] $[{\bfk}[G\times_{P_{J}}{\mathfrak u}_{J}]_\bullet:L(\sigma)]=\dim \operatorname{Ext}^{2\bullet}_{U_{\zeta}({\mathfrak g})}
(H^{0}(\epsilon_{J})\otimes L(\sigma)^{[1]},H^{0}(\epsilon_{J})).$
\item[(b)] Furthermore, if $\Gamma$ is a resolution of singularities and $\overline{\mathcal O}$ is normal then 
$$[{\bfk}[\overline{\mathcal O}]_{\bullet}:L(\sigma)]=\dim \operatorname{Ext}^{2\bullet}_{U_{\zeta}({\mathfrak g})}
(H^{0}(\epsilon_{J})\otimes L(\sigma)^{[1]},H^{0}(\epsilon_{J}))$$
where $k[\overline{\mathcal O}]_{\bullet}$ is the (graded) coordinate algebra of the orbit closure $\overline{\mathcal O}$ which is a conical variety.
\item[(c)] In particular $[{\bfk}[{\mathcal N}]_{\bullet}:L(\sigma)]=\dim \operatorname{Ext}^{2\bullet}_{U_{\zeta}({\mathfrak g})}
(L(\sigma)^{[1]},{\bfk})$.  
\end{itemize} 
\end{corollary}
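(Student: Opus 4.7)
The plan is to derive all three parts as direct specializations of Corollary~\ref{cor:nonshiftedextrealization} at $w=\on{id}$ and $\gamma=0$. First I would check that hypotheses (a), (b) of that corollary hold for this choice: hypothesis (a) is supplied automatically by the last sentence of Theorem~\ref{thm:quantumnonshiftrealization} (the $w=\on{id}$ case), while hypothesis (b) reduces to $\epsilon_J\in X^+$, which is clear from $\epsilon_J=(\ell-1)\sum_{\alpha\in J}\omega_\alpha$.

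Next I would simplify the module $N_{\on{id}}(0)$. Because $w=\on{id}$ gives $-w_{0,J}(\on{id}\cdot 0)=0$ and hence $H^0_J(0)\cong \bfk$, and because $\bfk_{\ell\gamma}=\bfk$ for $\gamma=0$, one has
$$N_{\on{id}}(0)=H^0(\epsilon_J)^*\otimes \on{ind}_{U_\zeta(\mathfrak p_J)}^{U_\zeta(\mathfrak g)} H^0_J(\epsilon_J).$$
Using transitivity of induction together with $H^0_J(\epsilon_J)\cong \on{ind}_{U_\zeta(\mathfrak b)}^{U_\zeta(\mathfrak p_J)}\bfk_{\epsilon_J}$ (inflating $\bfk_{\epsilon_J}$ via $U_\zeta(\mathfrak p_J)\to U_\zeta(\mathfrak l_J)$), the inner induced module collapses to $H^0(\epsilon_J)$. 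Hence $N_{\on{id}}(0)\cong H^0(\epsilon_J)^*\otimes H^0(\epsilon_J)$, which is precisely the algebra $A_{\mathcal O}$ appearing in Theorem~\ref{thm:quantumrealizereg}.

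With this identification, Corollary~\ref{cor:nonshiftedextrealization} (at $n=2\bullet$) gives
$$[\on{ind}_{P_J}^G S^{\bullet}(\mathfrak u_J^*):L(\sigma)]=\dim\on{Ext}^{2\bullet}_{U_\zeta(\mathfrak g)}(L(\sigma)^{[1]},H^0(\epsilon_J)^*\otimes H^0(\epsilon_J)).$$
Since $H^0(\epsilon_J)$ is finite-dimensional, tensoring with it preserves injectives, and the tensor-Hom adjunction $\on{Hom}(A,B\otimes V^*)\cong \on{Hom}(A\otimes V,B)$ extends to $\on{Ext}$ and recasts the right-hand side as $\dim\on{Ext}^{2\bullet}_{U_\zeta(\mathfrak g)}(H^0(\epsilon_J)\otimes L(\sigma)^{[1]},H^0(\epsilon_J))$. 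The left-hand side is the multiplicity of $L(\sigma)$ in the $\bullet$-graded piece of $\bfk[G\times_{P_J}\mathfrak u_J]\cong \on{ind}_{P_J}^G S^\bullet(\mathfrak u_J^*)$, which yields part (a).

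Part (b) follows at once because the assumptions that $\Gamma$ is a resolution of singularities and $\overline{\mathcal O}$ is normal give $\bfk[\overline{\mathcal O}]\cong \bfk[G\times_{P_J}\mathfrak u_J]$, exactly as used at the end of the proof of Theorem~\ref{thm:quantumrealizereg}. For part (c) one specializes $J=\varnothing$: then $P_\varnothing=B$, $\mathfrak u_\varnothing$ is the full nilpotent radical, $G\cdot\mathfrak u_\varnothing=\mathcal N$, and $\epsilon_\varnothing=0$, so $H^0(\epsilon_\varnothing)=\bfk$; the Springer resolution is a desingularization and $\mathcal N$ is normal, so part (b) applies and the formula collapses to $\dim\on{Ext}^{2\bullet}_{U_\zeta(\mathfrak g)}(L(\sigma)^{[1]},\bfk)$. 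The only genuine manipulations here are the transitivity of induction used to simplify $N_{\on{id}}(0)$ and the $\on{Ext}$-level adjunction used to move the dual across; everything else is bookkeeping from the results already established.
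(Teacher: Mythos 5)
Your proposal is correct and matches the route the paper intends: the corollary is stated immediately after Corollary~\ref{cor:nonshiftedextrealization} precisely as the specialization $w=\operatorname{id}$, $\gamma=0$, and your simplification of $N_{\operatorname{id}}(0)$ to $H^0(\epsilon_J)^*\otimes H^0(\epsilon_J)$ via the generalized tensor identity, transitivity of induction, and Kempf vanishing, followed by the tensor-Hom adjunction at the $\operatorname{Ext}$ level, is exactly the bookkeeping the authors leave implicit. Parts (b) and (c) then follow as you describe from normality of $\overline{\mathcal O}$ (respectively of $\mathcal N$) and the fact that $\epsilon_\varnothing=0$.
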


We can also use Theorem~\ref{thm:cohoquantumGcompfactors} in conjunction with Theorem~\ref{thm:quantumshiftrealization} to realize composition factor multiplicities 
in a shifted version of quantum group cohomology. 

\begin{corollary} \label{cor:shiftedextrealization} Let ${\ell}>h$, $J\subseteq \Delta$, $w\in {}^{J}W$ and $\epsilon_{J}=({\ell}-1)\sum_{\alpha\in J}\omega_{\alpha}$.
Moreover, let $\gamma\in X^{+}$ such that all weights of $T_{J}(\epsilon_{J}-w_{0,J}\epsilon_{J}+w_{0,J}(w\cdot 0))\otimes \bfk_{\ell\gamma}$ are in $X^{+}$. 
 Set 
$$N_w=\operatorname{ind}_{U_{\zeta}({\mathfrak p}_{J})}^{U_\zeta({\mathfrak g})} (T_{J}(\epsilon_{J}-w_{0,J}\epsilon_{J}+w_{0,J}(w\cdot 0))\otimes \bfk_{\ell\gamma}).$$  
Then
\begin{equation*}
[\widehat{I} :L(\sigma)]=
\begin{cases} \dim \operatorname{Ext}^{n}_{U_{\zeta}({\mathfrak g})}
(L(\sigma)^{[1]},  N_w(\gamma))  
&  
\text{ if } n-l(w) \equiv 0 \ (\operatorname{mod }2) ,      \\
0   & \text{otherwise}
\end{cases} 
\end{equation*} 
where $\widehat{I}=\operatorname{ind}_{P_{J}}^{G} (S^{\frac{n-l(w)}{2}}({\mathfrak u}_{J}^{*})\otimes \bfk_{\gamma})$. 
\end{corollary}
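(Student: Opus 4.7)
The plan is to imitate the derivation of Corollary~\ref{cor:nonshiftedextrealization} but with the input data from Theorem~\ref{thm:quantumshiftrealization} rather than Theorem~\ref{thm:quantumnonshiftrealization}. Concretely, I will set $M = \bfk$ and $Q_w = T_J(\sigma_{w,J}) = T_J(\epsilon_J - w_{0,J}\epsilon_J + w_{0,J}(w\cdot 0))$, regarded as a $U_\zeta(\mathfrak{p}_J)$-module by inflation from $U_\zeta(\mathfrak{l}_J)$, and then check that the pair $(M, Q_w)$ satisfies the hypotheses of Theorem~\ref{thm:cohoquantumGcompfactors} with the shift $t = l(w)$.

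First I would record the relevant structural facts about $Q_w$ reused from the proof of Theorem~\ref{thm:quantumshiftrealization}: since $\ell > h$, Pillen's result places $w \cdot 0$ in $X_{J,\mathrm{res}}$, so $T_J(\sigma_{w,J})$ appears as a multiplicity-one summand of $L_J(\epsilon_J) \otimes L_J(-w_{0,J}\epsilon_J + w_{0,J}(w\cdot 0))$; in particular it is $u_\zeta(\mathfrak{l}_J)$-injective and $U_\zeta(\mathfrak{u}_J)$-trivial. Also $Z_J^+ \cdot M = 0$ trivially since $M = \bfk$. Condition (ii) of Theorem~\ref{thm:cohoquantumGcompfactors} follows from the hypothesis that all weights of $T_J(\sigma_{w,J}) \otimes \bfk_{\ell\gamma}$ are in $X^+$, combined with Kempf vanishing (via the standard reduction through $\operatorname{ind}^{U_\zeta(\mathfrak{p}_J)}_{U_\zeta(\mathfrak{b})}$ and the generalized tensor identity, as in the verification of \eqref{condition-ii} in Theorem~\ref{thm:quantumnonshiftrealization}). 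Condition (i) with $t = l(w)$ is precisely what the proof of Theorem~\ref{thm:quantumshiftrealization} establishes: using Lemma~\ref{coh-direct-summand} one reduces to checking that $\operatorname{Hom}_{u_\zeta(\mathfrak{l}_J)}(L_J(y\cdot 0) \otimes L_J(\nu)^{[1]}, Q_w) \neq 0$ forces $y = w$ (so $n = l(w)$) and $\nu = 0$ via a linkage argument together with \eqref{importantHomidentity}.

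With both hypotheses of Theorem~\ref{thm:cohoquantumGcompfactors} verified, I would then directly apply the conclusion of that theorem. The Lyndon-Hochschild-Serre spectral sequence for $u_\zeta(\mathfrak{g}) \trianglelefteq U_\zeta(\mathfrak{g})$ collapses because $G$ is reductive over a characteristic-zero field and hence its rational module category is semisimple, forcing the higher $\operatorname{Ext}^i_G$ terms to vanish. The resulting identification
\[
[\operatorname{H}^{n}(u_\zeta(\mathfrak{g}), \widehat{M}) : L(\sigma)] = \dim \operatorname{Ext}^{n}_{U_\zeta(\mathfrak{g})}(L(\sigma)^{[1]}, \widehat{M})
\]
with $\widehat{M} = M \otimes \operatorname{ind}_{U_\zeta(\mathfrak{p}_J)}^{U_\zeta(\mathfrak{g})}(Q_w \otimes \bfk_{\ell\gamma}) = N_w(\gamma)$, combined with the $G$-module isomorphism $\operatorname{H}^{n}(u_\zeta(\mathfrak{g}), N_w(\gamma)) \cong \operatorname{ind}_{P_J}^G(S^{(n-l(w))/2}(\mathfrak{u}_J^*) \otimes \bfk_\gamma)$ supplied by Theorem~\ref{thm:quantumshiftrealization} (when $n - l(w)$ is even, and zero otherwise), yields exactly the asserted multiplicity formula.

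The only nontrivial step is the parity-plus-linkage argument that underlies condition (i), but this has already been carried out inside the proof of Theorem~\ref{thm:quantumshiftrealization}, so the present corollary is essentially a bookkeeping consequence once one observes that the module $N_w(\gamma)$ appearing in the statement is literally $M \otimes \operatorname{ind}_{U_\zeta(\mathfrak{p}_J)}^{U_\zeta(\mathfrak{g})}(Q_w \otimes \bfk_{\ell\gamma})$ with $M = \bfk$. I expect no further obstacle; the argument is a clean combination of the two theorems.
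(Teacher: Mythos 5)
Your proposal is correct and follows exactly the route the paper intends: the paper states Corollary~\ref{cor:shiftedextrealization} as a direct combination of Theorem~\ref{thm:cohoquantumGcompfactors} with Theorem~\ref{thm:quantumshiftrealization}, and you have correctly identified the inputs $M=\bfk$, $Q_w = T_J(\sigma_{w,J})$ with shift $t=l(w)$, verified the hypotheses by recycling the linkage and weight arguments from the proof of Theorem~\ref{thm:quantumshiftrealization}, and observed that the resulting $\widehat M$ is the module $N_w(\gamma)$ in the statement. No gaps; this is the same bookkeeping the authors leave implicit.
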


\section{Frobenius kernels}
 
\subsection{} We will demonstrate which results we have proved 
for quantum groups will hold for Frobenius kernels (under suitable cohomological vanishing conditions). Let $G$ be a reductive 
algebraic group scheme defined and split over ${\mathbb F}_{p}$ and $k$ be an algebraically 
closed field of characteristic $p>0$. 
Moreover, in this section, for any algebraic group $H$ 
defined over $\mathbb F_p$, 
let $H_{1}$ be the scheme theoretic 
kernel of the Frobenius map $\on{Fr}:H\rightarrow H$. 
If $ J\subseteq \Delta$, we will use
$P_J=L_J\ltimes U_J$ to denote the
parabolic subgroup of $G$ corresponding
to $J$ with a Levi subgroup $ L_J$ 
and the  unipotent radical $U_J$. 
All of these subgroups are defined over
$ \mathbb F_p$ and are $\on{Fr}$-stable.
As before we use $ \mathfrak p_J$, $\mathfrak u_J$, and $\mathfrak l_J$ to denote the corresponding Lie algebras over $k$. 
Following \cite[\S 7.2]{BNPP} we state 
the following assumptions on 
$J\subseteq \Delta$. 
The first assumption entails the 
Grauert-Riemenschneider 
vanishing result: 
\vskip .25cm 
\noindent
(A1) $R^{i}\text{ind}_{P_{J}}^{G} (S^{\bullet}({\mathfrak u}_{J}^{*})\otimes k_{\gamma})=0$ for $i>0$ and $\gamma\in X_{P_{J}}\cap X^{+}$, and 
$\text{ind}_{P_{J}}^{G} (S^{\bullet}({\mathfrak u}_{J}^{*})\otimes k_{\gamma})$ has a good filtration. 
\vskip .25cm 

\noindent
The second assumption on $J$ is a condition on the normality of $G\cdot {\mathfrak u}_{J}$. 
\vskip .25cm 
\noindent
(A2) The variety $G\cdot {\mathfrak u}_{J}$ is normal. 
\vskip .25cm 
\noindent

We note that (A1) have been shown when $p$ is a good prime and $\gamma$ satisfies the additional condition that 
$\langle \gamma, \alpha^{\vee} \rangle >0$ for all $\alpha\in \Delta-J$ \cite[Theorem 5]{KLT}. The condition $(A2)$, for example, has been verified when $\Phi=A_{n}$ 
(see \cite{Don}). 

\subsection{} Next we indicate how one can prove the analogue of 
Theorem~\ref{thm:quantumrealization}. 
For Frobenius kernels the proof is simpler because one has 
natural actions of the Levi subgroup 
on the exterior algebra and the ordinary
Lie algebra cohomology. 

We recall that $\on{H}^*(\mathfrak h, -)$ is the Lie algebra cohomology while $\on{H}^*(H_1, -)=\on{Ext}^{*}_{H_1}(k, -)$ for any algebraic group $H$ defined over $\mathbb F_p$.

\begin{theorem}\label{thm:Frobrealization} 
Let $p\geq 3$, $J\subseteq \Delta$, $M$ be a $G$-module, and 
$Q$ be a $P_{J}$-module such that 
$Q$ is an injective $(L_{J})_{1}$-module and trivial as a $U_{J}$-module. 
Assume that (A1) and the following 
two conditions hold: 
\begin{itemize} 
\item[(i)] As a rational $P_{J}/(P_{J})_{1}$-module, and for some $t\geq 0$, 
\begin{equation}
\operatorname{Hom}_{(L_{J})_{1}}(k,\operatorname{H}^{n}({\mathfrak u}_{J}, M)\otimes Q)\cong
\begin{cases} k & \text{ if } n=t, \\
0 & \text{otherwise.} 
\end{cases} 
\end{equation} 
\item[(ii)] For $\gamma\in X_{P_{J}}\cap X^{+}$,
$R^{i}\operatorname{ind}_{P_{J}}^{G} (Q \otimes k_{p\gamma}) =0$ for $i>0$. 
\end{itemize} 
Then there exists an isomorphism of rational $G$-modules:
\begin{equation}
\operatorname{H}^{n}(G_{1},M\otimes \operatorname{ind}_{P_{J}}^{G} (Q\otimes p\gamma))\cong
\begin{cases} \operatorname{ind}_{P_{J}}^{G} (S^{\frac{n-t}{2}}({\mathfrak u}^{*}_{J}) \otimes k_{\gamma})^{[1]} & n-t \equiv 0 \ (\operatorname{mod }2) \\
0 & \text{otherwise} 
\end{cases} 
\end{equation} 
\end{theorem}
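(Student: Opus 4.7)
The plan is to mirror the proof of Theorem~\ref{thm:quantumrealization}, substituting Frobenius-kernel cohomology for quantum-group cohomology and the classical Frobenius twist for its quantum counterpart. The first step will be to establish the Frobenius-kernel analog of Proposition~\ref{quantumExtcalculation}: under hypothesis (i), as a $P_{J}/(P_{J})_{1}$-module,
\begin{equation*}
\operatorname{H}^{n}((P_{J})_{1}, M \otimes Q) \cong
\begin{cases} S^{(n-t)/2}(\mathfrak{u}_{J}^{*})^{[1]} & \text{if } n \equiv t \pmod 2, \\ 0 & \text{otherwise}. \end{cases}
\end{equation*}
The bridge from the Lie algebra cohomology $\operatorname{H}^{*}(\mathfrak{u}_{J}, M)$ to $(U_{J})_{1}$-cohomology is the $L_{J}$-equivariant Friedlander-Parshall/Andersen-Jantzen spectral sequence (which requires $p\geq 3$) whose $E_{2}$-page is built from $S^{*}(\mathfrak{u}_{J}^{*})^{[1]}$ and $\operatorname{H}^{*}(\mathfrak{u}_{J}, M)$. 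Tensoring with $Q$ and applying the functor $\operatorname{Hom}_{(L_{J})_{1}}(k, -)$, which is exact by the $(L_{J})_{1}$-injectivity of $Q$, the parity assumption in (i) forces collapse and yields the $(U_{J})_{1}$-level identification. The $(P_{J})_{1}$-cohomology then follows from the Lyndon-Hochschild-Serre spectral sequence for the normal subgroup scheme $(U_{J})_{1}$ in $(P_{J})_{1}$, which similarly collapses because $Q$ is $(L_{J})_{1}$-injective.

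With Step~1 in hand, the remainder of the argument proceeds exactly as in Theorem~\ref{thm:quantumrealization}. Introduce the naturally isomorphic functors
\begin{equation*}
\mathcal{F}_{1}(-) = \operatorname{Hom}_{G_{1}}(k, \operatorname{ind}_{P_{J}}^{G}(-)), \qquad \mathcal{F}_{2}(-) = \operatorname{ind}_{P_{J}}^{G} \operatorname{Hom}_{(P_{J})_{1}}(k, -),
\end{equation*}
using the identification $\operatorname{ind}_{P_{J}}^{G} \cong \operatorname{ind}_{P_{J}/(P_{J})_{1}}^{G/G_{1}}$ on modules inflated along Frobenius (combined with the Frobenius twist). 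Setting $D = M \otimes Q \otimes k_{p\gamma}$, the two Grothendieck spectral sequences share the same abutment. Hypothesis (ii) together with the tensor identity forces $R^{i}\operatorname{ind}_{P_{J}}^{G} D = 0$ for $i > 0$, so the $\mathcal{F}_{1}$-spectral sequence collapses. Combining this with the computation of Step~1 rewrites the $\mathcal{F}_{2}$-spectral sequence as
\begin{equation*}
E_{2}^{i,j} = R^{i}\operatorname{ind}_{P_{J}}^{G}\bigl(S^{(j-t)/2}(\mathfrak{u}_{J}^{*}) \otimes k_{\gamma}\bigr)^{[1]} \Rightarrow \operatorname{H}^{i+j}\bigl(G_{1}, M \otimes \operatorname{ind}_{P_{J}}^{G}(Q \otimes k_{p\gamma})\bigr).
\end{equation*}
Assumption (A1) kills the rows with $i > 0$, the spectral sequence collapses, and the desired isomorphism emerges.

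The main obstacle will be Step~1, specifically verifying the $P_{J}/(P_{J})_{1}$-equivariance (not merely $B/B_{1}$-equivariance) of the identification of $(P_{J})_{1}$-cohomology. In the quantum setting this is precisely what required the elaborate tower-of-central-subalgebras argument of Proposition~\ref{quantumExtcalculation}. In the Frobenius-kernel setting, however, the natural $L_{J}$-action on the Frobenius-twisted polynomial algebra $S^{*}(\mathfrak{u}_{J}^{*})^{[1]}$ (inherited from the coadjoint representation) is already present, so the Levi equivariance should lift essentially automatically from the $B$-level. The remaining subtlety is bookkeeping: one must track Frobenius twists carefully through both the spectral sequences of Step~1 and the induction step so that the final abutment appears in the form $\operatorname{ind}_{P_{J}}^{G}\bigl(S^{(n-t)/2}(\mathfrak{u}_{J}^{*}) \otimes k_{\gamma}\bigr)^{[1]}$ demanded by the statement.
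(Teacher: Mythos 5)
Your proposal is correct and follows essentially the same route as the paper's proof: the Friedlander--Parshall spectral sequence for $(U_J)_1$ (valid for $p\ge 3$) combined with the Lyndon--Hochschild--Serre spectral sequence for $(U_J)_1\lhd (P_J)_1$ and the $(L_J)_1$-injectivity of $Q$ to establish the Frobenius-kernel analog of Proposition~\ref{quantumExtcalculation}, followed by the same two-functor spectral-sequence comparison as in Theorem~\ref{thm:quantumrealization} with (ii), the tensor identity, and (A1) forcing collapse. The only cosmetic difference is the order of the two spectral sequences in your Step~1 (you run Friedlander--Parshall and then LHS, the paper runs LHS and then Friedlander--Parshall), and your remark that the $P_J/(P_J)_1$-equivariance is automatic here because both spectral sequences are already $P_J$-equivariant is exactly the reason the paper does not need the tower argument of Proposition~\ref{quantumExtcalculation} in this setting.
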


\begin{proof} In order to prove the theorem, we apply the LHS spectral sequence 
to $(U_{J})_{1}$ embedded in $(P_{J})_{1}$ as a normal subgroup scheme with quotient $(L_{J})_{1}$ using the fact that $Q$ is $U_J$-trivial: 
\begin{equation*} 
E_{2}^{i,j}=\operatorname{H}^{i}((L_{J})_{1},\operatorname{H}^{j}((U_{J})_{1},M) \otimes Q )\Rightarrow 
\operatorname{H}^{i+j}((P_{J})_{1},M\otimes Q).
\end{equation*} 
Since $Q$ is projective over $(L_{J})_{1}$, this spectral sequence collapses and 
yields: 
\begin{equation*} 
\text{H}^{n}((P_{J})_{1},M\otimes Q)\cong \text{Hom}_{(L_{J})_{1}}(k,\operatorname{H}^{n}((U_{J})_{1},M)\otimes Q ). 
\end{equation*}   
For $p\geq 3$, there exists a first quadrant spectral sequence \cite[(1.3) Proposition]{FP2}:
\begin{equation*}
{E}_{2}^{2i,j}=S^{i}({\mathfrak u}_{J}^{*})^{[1]}\otimes
\text{H}^{j}({\mathfrak u}_{J},M\otimes Q)
\Rightarrow \text{H}^{2i+j}((U_{J})_{1},M\otimes Q).
\end{equation*}
Since the functor $\text{Hom}_{(L_{J})_{1}}(k, -\otimes Q)$ is exact, we can
compose it with the spectral sequence above and use the fact that $S^*(\mathfrak u_J^*)^{(i)}$ is a trivial $(P_J)_1$-module and $Q$ is a trivial module when restricted to $U_J$ to get another spectral sequence:
\begin{equation}
E_{2}^{2i,j}=S^{i}({\mathfrak u}_{J}^{*})^{[1]}\otimes
\text{Hom}_{(L_{J})_{1}}(k,\text{H}^{j}({\mathfrak u}_{J},M)\otimes Q)\Rightarrow
\text{H}^{2i+j}((P_{J})_{1},M\otimes Q).
\end{equation}
Condition (i) implies that the $E_{2}$ only lives one (horizontal) line, thus the spectral sequence collapses and yields 
\begin{equation*} 
\operatorname{Hom}_{(L_{J})_{1}}(k,\operatorname{H}^{n}((U_{J})_{1},M) \otimes Q)) 
\cong \begin{cases} S^{\frac{n-t}{2}}({\mathfrak u}^{*}_{J})^{[1]}  & \text{ if } n\equiv t \ (\text{mod }2),  \\
0 & \text{otherwise}. 
\end{cases} 
\end{equation*} 
and  
\begin{equation*}
\operatorname{H}^{n}((P_{J})_{1},M\otimes Q)\cong
\begin{cases} S^{\frac{n-t}{2}}({\mathfrak u}^{*}_{J})^{[1]}  & \text{ if } n-t\equiv 0 \ (\operatorname{mod }2), \\
0 & \text{ otherwise}.
\end{cases} 
\end{equation*} 

Now one can apply the same techniques given in the proof of Theorem~\ref{thm:quantumrealization} and (A1) to 
yield the desired results. 

\end{proof}

\subsection{} By using Theorem~\ref{thm:Frobrealization} one can prove Frobenius kernel analogs (using the same weight arguments as given in the quantum case) of 
Theorems ~\ref{thm:quantumnonshiftrealization}, ~\ref{thm:quantumrealizereg}, and ~\ref{thm:quantumshiftrealization}. 

\begin{theorem}\label{thm:Frobnonshiftrealization} 
Let $p>2h-1$, $J\subseteq \Delta$, $\epsilon_{J}
=\sum_{\alpha\in J}(p-1)\omega_{\alpha}$, $M=H^0(\epsilon_J)^*$.  
 For $w\in {}^JW$ and $\gamma\in X_{P_{J}}\cap X^{+}$ 
 set 
$$N_{w}(\gamma):=H^{0}(\epsilon_{J})^{*}\otimes  \operatorname{ind}_{P_{J}}^{G}( H^{0}_{J}(\epsilon_{J})\otimes 
H^{0}_{J}(-w_{0,J}w\cdot 0)\otimes k_{p\gamma}).$$
Assume (A1) holds and  
\begin{itemize}
\item[(a)] $\operatorname{Hom}_{(L_{J})_{1}}(k,\operatorname{H}^{0}((U_{J})_{1},M) \otimes Q))\cong k$,
\item[(b)] $\epsilon_J-w_{0,J}(w\cdot 0)+p\gamma \in X^{+}$.  
\end{itemize} 
Then there exists an isomorphism of rational $G$-modules:
\begin{equation}
\on{H}^{n}(G_{1},N_{w}(\gamma))\cong
\begin{cases} \on{ind}_{P_{J}}^{G} (S^{\frac{n}{2}}({\mathfrak u}^{*}_{J}) \otimes k_{\gamma})^{[1]} & \text{ if } w=\on{id } \text{ and } \ n \equiv 0 \ (\on{mod }2) ,  \\
0 & \text{ otherwise}. 
\end{cases} 
\end{equation} 
In the case when $w=\on{id}$ for $N_{w}(\gamma)$, the condition on $p$ can be replaced by $p>h$. 
\end{theorem}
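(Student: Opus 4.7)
The plan is to apply Theorem~\ref{thm:Frobrealization} with $M = H^0(\epsilon_J)^*$ and $Q = Q_w := H^0_J(\epsilon_J) \otimes H^0_J(-w_{0,J}(w\cdot 0))$, inflated from $L_J$ to a $P_J$-module, and with $t = 0$. First I would verify the standing hypotheses of that theorem: $Q_w$ is $U_J$-trivial by construction, and it is $(L_J)_1$-injective because $H^0_J(\epsilon_J) = L_J(\epsilon_J)$ is the Steinberg module of $L_J$ and hence simultaneously simple, projective, and injective over $(L_J)_1$, so tensoring with $H^0_J(-w_{0,J}(w\cdot 0))$ preserves injectivity. Assumption (A1) is in place by hypothesis, so only conditions (i) and (ii) of Theorem~\ref{thm:Frobrealization} remain to be checked.

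For condition (ii), I would factor induction from $B$ to $G$ through $P_J$ and use the Grothendieck spectral sequence
\begin{equation*}
E_2^{i,j} = R^i \on{ind}_{P_J}^G R^j \on{ind}_B^{P_J}(k_{\epsilon_J} \otimes H^0_J(-w_{0,J}(w\cdot 0)) \otimes k_{p\gamma}) \Rightarrow R^{i+j} \on{ind}_B^G(k_{\epsilon_J} \otimes H^0_J(-w_{0,J}(w\cdot 0)) \otimes k_{p\gamma}).
\end{equation*}
The generalized tensor identity rewrites the inner induction as $(R^j \on{ind}_{B_J}^{L_J} k) \otimes (H^0_J(\epsilon_J) \otimes H^0_J(-w_{0,J}(w\cdot 0)) \otimes k_{p\gamma})$, which vanishes for $j > 0$ by Kempf's vanishing on $L_J/B_J$. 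Condition (ii) thus reduces to Kempf's vanishing on $G/B$ for the weights $\epsilon_J + \mu + p\gamma$ as $\mu$ runs over the weights of $H^0_J(-w_{0,J}(w\cdot 0))$; dominance follows by the same two-step case analysis used in the proof of Theorem~\ref{thm:quantumnonshiftrealization} (hypothesis (b) handles $\beta \in \Delta \setminus J$, and a $W_{J_s}$-rotation together with $\langle \epsilon_J, \beta^\vee \rangle = p-1$ and $\langle w\cdot 0, \delta^\vee \rangle \geq -(h-1)$ handles $\beta \in J$).

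For condition (i), the modular Kostant-type decomposition $\on{H}^n(\mathfrak u_J, k) \cong \bigoplus_{y \in {}^JW,\; l(y) = n} L_J(-w_{0,J}(y\cdot 0))$, valid for $p \geq h - 1$ by the analogue of \cite[Thm.~6.4.1]{UGAVIGRE2}, combined with the $(L_J)_1$-injectivity of $Q_w$ (which splits tensor-product short exact sequences of $(L_J)_1$-modules), yields a Frobenius-kernel analogue of Lemma~\ref{coh-direct-summand}. The Hom vanishing for $n > 0$ then reduces to showing
\begin{equation*}
\on{Hom}_{(L_J)_1}(k, H^0(\epsilon_J)^* \otimes L_J(-w_{0,J}(y\cdot 0)) \otimes L_J(\epsilon_J) \otimes L_J(-w_{0,J}(w\cdot 0))) = 0
\end{equation*}
for every $y \in {}^JW$ with $l(y) = n > 0$. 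Translating by Frobenius reciprocity to $B_J$ and reading off the head of $L_J(-w_{0,J}\epsilon_J) \otimes L_J(\nu)^{[1]}$ produces a weight equation $\epsilon_J + p\nu = \mu + y\cdot 0 + \sigma$ with $\nu \in X_J^+$, $\mu$ a weight of $H^0(\epsilon_J)$, and $\sigma$ a weight of $L_J(w\cdot 0)$. A $W_{\Delta \setminus J}$-rotation $\nu \mapsto \nu' \in X^+$, followed by pairing with the coroot $\alpha_0^\vee$ of the highest short root, and the standard bound $\langle \delta, \alpha_0^\vee \rangle \leq 2(h-1)$ for $\delta \leq z \cdot 0$, produces $0 \leq p\langle \nu', \alpha_0^\vee \rangle \leq 4(h-1) < 2p$ under $p > 2h-1$. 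Since $p \nmid |X/\mathbb Z\Phi|$ and $p\nu' \in \mathbb Z\Phi$, this forces $\nu = 0$; then $\mathbb N\Phi^+$-positivity forces $y = \on{id}$ and $\sigma = 0$, contradicting $n > 0$. When $w = \on{id}$ one has $\sigma = 0$ from the outset, tightening the bound to $\leq 2(h-1) < 2p$ under only $p > h$. Hypothesis (a) supplies the value at $n = 0$.

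The main obstacle is establishing the $(L_J)_1$-equivariant Kostant-type decomposition of $\on{H}^n(\mathfrak u_J, k)$ together with its splitting compatibility under tensoring with an injective $(L_J)_1$-module; this is the Frobenius-kernel analogue of Lemma~\ref{coh-direct-summand}, and once it is in place the remainder of the proof is a direct translation of that of Theorem~\ref{thm:quantumnonshiftrealization}, with $p$ taking the role of $\ell$ and $(L_J)_1$-invariants replacing $u_\zeta(\mathfrak l_J)$-invariants throughout.
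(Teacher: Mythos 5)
Your proof is essentially the one the paper intends: the paper states Theorem~\ref{thm:Frobnonshiftrealization} as an application of Theorem~\ref{thm:Frobrealization} ``using the same weight arguments as given in the quantum case,'' and that is exactly what you have carried out, with the modular Kostant theorem for $\on{H}^n(\mathfrak u_J, k)$ (which should be cited from \cite{UGAVIGRE1} rather than as an analogue of \cite{UGAVIGRE2}) supplying the Frobenius-kernel analogue of Lemma~\ref{coh-direct-summand}. One small discrepancy worth noting: your argument, mirroring Theorem~\ref{thm:quantumnonshiftrealization}, concludes that $\on{H}^n(G_1, N_w(\gamma))\cong \on{ind}_{P_J}^G(S^{n/2}(\mathfrak u_J^*)\otimes k_\gamma)^{[1]}$ for every even $n$ whenever hypotheses (a) and (b) hold, regardless of $w$, whereas the theorem as printed restricts the nonvanishing case to $w=\on{id}$; this appears to be an editorial slip in the statement rather than a gap in your proof, since the quantum model (Theorem~\ref{thm:quantumnonshiftrealization}) carries no such restriction and the role of $w=\on{id}$ there is only to relax the bound on $p$ and to guarantee (a) automatically.
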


This theorem specializes to the case when $\gamma=0$ and $w=\text{id}$ to answer the question posed by Friedlander and 
Parshall for $G_{1}$ assuming that $(A1)$ and $(A2)$ hold. 

\begin{theorem}\label{thm:Frobrealization2} Let $p>h$, $J\subseteq \Delta$ and assume that 
(A1) holds. Let ${\mathcal O}:={\mathcal O}_{J}$ be a 
Richardson orbit in ${\mathcal N}$ with 
moment map $\Gamma:G\times_{P} {\mathfrak u}_{J}\rightarrow G\cdot {\mathfrak u}_{J}$. Then there exists 
a finite-dimensional $G$-algebra $A_{\mathcal O}$ such that 
\begin{itemize} 
\item[(a)] $\operatorname{H}^{2\bullet+1}(G_{1},A_{\mathcal O})=0$; 
\item[(b)] $\operatorname{H}^{2\bullet}(G_{1},A_{\mathcal O})\cong{k}[{\mathcal O}]^{[1]}
\cong {k}[G\times_{P_{J}}{\mathfrak u}_{J}]^{[1]}\cong 
\operatorname{ind}_{P_{J}}^{G} S^{\bullet}({\mathfrak u}_{J}^{*})^{[1]}$.  
\end{itemize} 
Furthermore, if $\Gamma$ is a resolution of singularities and $(A2)$ holds then 
$$\operatorname{H}^{2\bullet}(u_{\zeta}({\mathfrak g}),A_{\mathcal O})\cong{k}[\overline{\mathcal O}]^{[1]}.$$ 
\end{theorem}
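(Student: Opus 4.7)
The plan is to mirror the proof of Theorem~\ref{thm:quantumrealizereg} from the quantum setting, with Theorem~\ref{thm:Frobnonshiftrealization} playing the role of Theorem~\ref{thm:quantumnonshiftrealization}. I would set
\[
A_{\mathcal O} := \End_{k}(H^{0}(\epsilon_{J})) \cong H^{0}(\epsilon_{J})^{*} \otimes H^{0}(\epsilon_{J}),
\]
which is naturally a finite-dimensional rational $G$-algebra under conjugation. Transitivity of induction gives $\on{ind}_{P_{J}}^{G} H^{0}_{J}(\epsilon_{J}) \cong H^{0}(\epsilon_{J})$, so that with $w = \on{id}$ and $\gamma = 0$ the module $N_{w}(\gamma)$ of Theorem~\ref{thm:Frobnonshiftrealization} coincides with $A_{\mathcal O}$ (here $H^{0}_{J}(0) = k$ collapses $Q_{\on{id}}$ to $H^{0}_{J}(\epsilon_{J})$).

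I would then invoke Theorem~\ref{thm:Frobnonshiftrealization} with these data, noting that the weakened hypothesis $p > h$ is permitted precisely when $w = \on{id}$. Hypothesis (b) of that theorem reduces to $\epsilon_{J} \in X^{+}$, which is automatic, and hypothesis (A1) supplies the required Grauert--Riemenschneider vanishing. To verify the remaining hypothesis (a), namely $\hom_{(L_{J})_{1}}(k, \on{H}^{0}((U_{J})_{1}, M) \otimes Q) \cong k$ with $M = H^{0}(\epsilon_{J})^{*}$ and $Q = H^{0}_{J}(\epsilon_{J})$, I would transcribe the argument used for $w = \on{id}$ in the proof of Theorem~\ref{thm:quantumnonshiftrealization}: a spectral sequence (leveraging injectivity of the Steinberg module over $(L_{J})_{1}$) rewrites the Hom space as $\on{ind}_{B}^{P_{J}} V$ where $V := \hom_{(B_{J})_{1}}(H^{0}(\epsilon_{J}), k_{\epsilon_{J}})$, after which dominance-plus-socle analysis combined with the Borel--Bott--Weil computation that $\Ext^{1}_{B}(k_{-\alpha}, k) \neq 0$ only for $\alpha \in \Delta$ (\cite[II.5.5]{Jan1}) forces $V \cong k$ as a $B$-module.

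With Theorem~\ref{thm:Frobnonshiftrealization} in hand, parts (a) and (b) of the present theorem follow at once: the odd cohomology vanishes and the even cohomology identifies with $\on{ind}_{P_{J}}^{G} S^{\bullet}(\fu_{J}^{*})^{[1]}$. The standard identification of the latter with $k[G \times_{P_{J}} \fu_{J}]^{[1]}$ as global sections of the symmetric algebra of the conormal bundle on $G/P_{J}$, together with the fact that the moment map $\Gamma$ is generically finite and surjective onto $G \cdot \fu_{J}$, yields $k[\mathcal O] \cong k[G \times_{P_{J}} \fu_{J}]$. For the final assertion, (A2) says $\overline{\mathcal O}$ is normal, so together with $\Gamma$ being a resolution the Stein-factorization argument of \cite[\S 3.5]{BNPP} gives $k[\overline{\mathcal O}] \cong k[\mathcal O]$, completing the proof.

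The principal obstacle will be the characteristic-$p$ verification of hypothesis (a) of Theorem~\ref{thm:Frobnonshiftrealization}. Although the algebraic shape of the argument is identical to its quantum counterpart, one must carefully deploy the positive-characteristic analogs---injectivity of the Steinberg module for $(L_{J})_{1}$, linkage arguments for $\mathfrak u$-cohomology, and Borel--Bott--Weil in Jantzen's form---and the hypothesis $p > h$ must be used to ensure that the weight inequalities (notably those obtained by pairing with the highest short coroot $\alpha_{0}^{\vee}$) proceed exactly as they did in the proof of Theorem~\ref{thm:quantumnonshiftrealization}.
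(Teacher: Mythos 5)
Your proposal is correct and follows essentially the same route the paper indicates: apply Theorem~\ref{thm:Frobnonshiftrealization} with $w=\operatorname{id}$ and $\gamma=0$, take $A_{\mathcal O}=\operatorname{End}_k(H^0(\epsilon_J))\cong N_{\operatorname{id}}(0)$ via transitivity of induction, and verify hypothesis (a) by transcribing the $w=\operatorname{id}$ weight/Borel--Bott--Weil argument from the proof of Theorem~\ref{thm:quantumnonshiftrealization} to the Frobenius kernel setting. The paper leaves these details implicit (stating only that the theorem ``specializes'' and that the same weight arguments apply), but your reconstruction, including the final appeal to (A2) and the Stein-factorization argument from \cite[\S 3.5]{BNPP}, matches the intended argument.
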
 
In the following theorem, we will use the notation $ T_J(\lambda)$ to denote the partial tilting $L_J$-module of hightest weight  $ \lambda \in X_J^+$. 
\begin{theorem}\label{thm:Frobshiftrealization} 
Let $p>h$, $J\subseteq \Delta$, $w\in {}^{J}W$ and
$\epsilon_{J}=(p-1)\sum_{\alpha\in J}\omega_{\alpha}$. 
Assume that (A1) holds. 
Moreover, let $\gamma\in X_{P_J}\cap X^{+}$ such that all weights of $T_{J}(\epsilon_{J}-w_{0,J}\epsilon_{J}+w_{0,J}(w\cdot 0))\otimes k_{p\gamma}$ are in $X^{+}$
Then there exists an isomorphism of rational $G$-modules:
\begin{equation}
\on{H}^{n}(G_{1},\on{ind}_{P_{J}}^{G} (\widehat{T})\otimes k_{p\gamma}))\cong
\begin{cases} \on{ind}_{P_{J}}^{G}( S^{\frac{n-l(w)}{2}}({\mathfrak u}^{*}_{J}) \otimes k_{\gamma})^{[1]} & n-l(w) \equiv 0 \ (\on{mod }2),   \\
0 & \text{ otherwise}
\end{cases} 
\end{equation} 
where $\widehat{T}=T_{J}(\epsilon_{J}-w_{0,J}\epsilon_{J}+w_{0,J}(w\cdot 0))$. 

\end{theorem}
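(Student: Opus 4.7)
The plan is to deduce Theorem~\ref{thm:Frobshiftrealization} from Theorem~\ref{thm:Frobrealization} by imitating the proof of its quantum analog Theorem~\ref{thm:quantumshiftrealization}. Specifically, set $M = k$, $Q = T_{J}(\sigma_{w,J})$ (with $\sigma_{w,J} = \epsilon_J - w_{0,J}\epsilon_J + w_{0,J}(w\cdot 0)$), inflated from $L_J$ to $P_J$, and take $t = l(w)$. Since $p > h$, the weight $w\cdot 0$ lies in $X_{J,\text{res}}$, so by the modular analog of Pillen's theorem $T_J(\sigma_{w,J})$ occurs as a multiplicity-one $L_J$-summand of $L_J(\epsilon_J)\otimes L_J(-w_{0,J}\epsilon_J + w_{0,J}(w\cdot 0))$. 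Because the Steinberg module $L_J(\epsilon_J)$ is injective as an $(L_J)_1$-module, so is this tensor product, and hence so is $Q$. Also $Q$ is $U_J$-trivial by construction.

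Verifying condition (ii) of Theorem~\ref{thm:Frobrealization} is immediate: by hypothesis all weights of $\widehat{T}\otimes k_{p\gamma}$ are dominant, so by the generalized tensor identity and Kempf's vanishing,
\[
R^{i}\operatorname{ind}_{P_{J}}^{G}\bigl(\widehat{T}\otimes k_{p\gamma}\bigr) \cong R^{i}\operatorname{ind}_{B}^{G}\bigl(Q\otimes k_{p\gamma}\bigr) = 0 \quad (i>0),
\]
mimicking the argument at the end of the proof of Theorem~\ref{thm:quantumnonshiftrealization}.

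The main obstacle will be condition (i): proving
\[
\operatorname{Hom}_{(L_{J})_{1}}\bigl(k,\operatorname{H}^{n}(\mathfrak{u}_{J},k)\otimes Q\bigr) \cong \begin{cases} k, & n = l(w),\\ 0, & \text{otherwise.}\end{cases}
\]
For this I will invoke the characteristic-$p$ Kostant theorem (valid since $p > h$), which provides an $L_J$-module decomposition
\[
\operatorname{H}^{n}(\mathfrak{u}_{J},k) \cong \bigoplus_{y\in{}^{J}W,\, l(y)=n} L_{J}\bigl(-w_{0,J}(y\cdot 0)\bigr).
\]
Then a nonvanishing $\operatorname{Hom}_{(L_{J})_{1}}(L_J(p\nu)^{[1]}, L_J(-w_{0,J}(y\cdot 0))\otimes Q)\neq 0$ forces, via the appearance of $L_J(w\cdot 0)$ in the $(L_J)_1$-socle of $Q$, the linkage relation
\[
y\cdot 0 + p\nu = x\cdot(w\cdot 0) + p\sigma
\]
for some $x\in W_J$ and $\sigma\in \mathbb{Z}\Phi_J$. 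The same weight/height estimate against the highest short coroot $\alpha_0^\vee$ used in the proof of Theorem~\ref{thm:quantumshiftrealization} (cf.\ \cite[Lemmas 2.1.1 and 2.1.2]{DNP}) then forces $\nu = \sigma$, $y = xw$, and since $y,w\in {}^{J}W$, one concludes $x=\operatorname{id}$, $y=w$, $n=l(w)$, and $\nu = 0$. Combining this with the multiplicity-one occurrence of $L_J(w\cdot 0)$ in both head and socle of $Q$ gives the desired $\operatorname{Hom}$ space.

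With (i) and (ii) verified, Theorem~\ref{thm:Frobrealization} applies with $t=l(w)$ and yields exactly the asserted isomorphism of rational $G$-modules.
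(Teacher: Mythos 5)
The paper does not give an explicit proof of Theorem~\ref{thm:Frobshiftrealization}; it only remarks at the start of Section~7.3 that the Frobenius-kernel analogs follow from Theorem~\ref{thm:Frobrealization} ``using the same weight arguments as given in the quantum case.'' Your proposal is a faithful instantiation of that remark: you take $M=k$, $Q=T_J(\sigma_{w,J})$, $t=l(w)$, use Pillen's original (modular) multiplicity-one summand result together with $(L_J)_1$-injectivity of the Steinberg module to get injectivity of $Q$, verify (ii) via the tensor identity plus Kempf vanishing on the assumption that all weights of $\widehat T\otimes k_{p\gamma}$ are dominant, and verify (i) by replacing the quantum Kostant theorem with the characteristic-$p$ Kostant decomposition of $\operatorname{H}^n(\mathfrak u_J,k)$ and then running the identical linkage/weight comparison (here against $W_J\cdot(w\cdot 0)+p\mathbb Z\Phi_J$) to pin down $y=w$, $x=\mathrm{id}$, $\nu=0$, $n=l(w)$. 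This mirrors the proof of Theorem~\ref{thm:quantumshiftrealization} step for step, so it is essentially the paper's own (implicit) argument. One cosmetic slip: you write $L_J(p\nu)^{[1]}$ where you mean $L_J(\nu)^{[1]}$, whose weight is $p\nu$; the linkage equation you write afterward is the correct one, so this is only a notational typo, not a gap.
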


\subsection{} Corollaries ~\ref{cor:nonshiftedextrealization}, ~\ref{cor:cohoquantum}, and ~\ref{cor:shiftedextrealization} have analogs in the case of Frobenius kernels 
via the use of good filtrations. We recall that a $G$-module $E$ is said to have a good filtration if $E$ has a filtration with sections isomorphism to modules of the form $ H^0(\sigma)$ with $ \sigma \in X^+$. The number of times $H^0(\sigma)$ appears as the sections is independent of the choice of the good filtration and denoted by $[E: H^0(\sigma)]$. This notation (not to be confused with composition factor multiplicities) will only be used in this section. 

\begin{corollary} \label{cor:Frobcompfactor1} 
Let $p>h$, $J\subseteq \Delta$, 
$\epsilon_{J}=\sum_{\alpha\in J}(p-1)\omega_{\alpha}$, and $M=H^0(\epsilon_J)^*$. For 
 $w\in {}^JW$ and $\gamma\in X_{P_{J}}\cap X^{+}$, set 
$$N_w(\gamma):=H^{0}(\epsilon_{J})^{*}\otimes  \operatorname{ind}_{P_{J}}^{G}( H^{0}_{J}(\epsilon_{J})\otimes 
H^{0}_{J}(-w_{0,J}w\cdot 0)\otimes k_{p\gamma}).$$
Assume that (A1) holds and  
\begin{itemize}
\item[(a)] $\operatorname{Hom}_{(L_{J})_{1}}(k,\operatorname{H}^{0}((U_{J})_{1},M) \otimes Q))\cong k$,
\item[(b)] $\epsilon_J-w_{0,J}(w\cdot 0)+p\gamma \in X^{+}$.  
\end{itemize} 
Then 
\begin{equation*}
[\widehat{I} :H^{0}(\sigma)]=
\begin{cases} \dim \on{Ext}^{n}_{G}
(V(\sigma)^{[1]},  N_w(\gamma))  
& \text{ if } w=\on{id} 
\text { and }  \ n \equiv 0 \ (\on{mod }2),       \\
0   & \text{ otherwise}
\end{cases} 
\end{equation*} 
where $\widehat{I}=\on{ind}_{P_{J}}^{G} (S^{\frac{n}{2}}({\mathfrak u}_{J}^{*})^{[1]}\otimes k_{\gamma})$. 
\end{corollary}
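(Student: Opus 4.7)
The plan is to combine Theorem~\ref{thm:Frobnonshiftrealization} (the Frobenius-kernel analog of the quantum realization at $w=\mathrm{id}$) with a Lyndon--Hochschild--Serre spectral sequence for $G_{1}$ normal in $G$, as in the proof of Theorem~\ref{thm:cohoquantumGcompfactors}. The key point of departure from the characteristic-zero/quantum argument is that rational $G$-modules over $k$ are not semisimple, so the complete-reducibility step used there has to be replaced by Donkin's good-filtration $\operatorname{Ext}$-vanishing theory.

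First I would set up the LHS spectral sequence for $G_{1}$ against the $G_{1}$-trivial test module $V(\sigma)^{[1]}$:
\begin{equation*}
E_{2}^{i,j}=\operatorname{Ext}^{i}_{G/G_{1}}\bigl(V(\sigma)^{[1]},\operatorname{H}^{j}(G_{1},N_{\mathrm{id}}(\gamma))\bigr)\Longrightarrow \operatorname{Ext}^{i+j}_{G}\bigl(V(\sigma)^{[1]},N_{\mathrm{id}}(\gamma)\bigr).
\end{equation*}
Identifying $G/G_{1}\cong G^{(1)}$ via the Frobenius and untwisting on both entries, the $E_{2}$-page becomes $\operatorname{Ext}^{i}_{G}\bigl(V(\sigma),\operatorname{H}^{j}(G_{1},N_{\mathrm{id}}(\gamma))^{[-1]}\bigr)$. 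Then I would invoke Theorem~\ref{thm:Frobnonshiftrealization} at $w=\mathrm{id}$ (whose hypotheses are exactly (A1), (a), (b) of the present corollary) to identify
\begin{equation*}
\operatorname{H}^{j}(G_{1},N_{\mathrm{id}}(\gamma))^{[-1]}\cong \widehat{I}_{j}:=\operatorname{ind}_{P_{J}}^{G}\bigl(S^{j/2}(\mathfrak{u}_{J}^{*})\otimes k_{\gamma}\bigr)
\end{equation*}
for $j$ even, and zero for $j$ odd.

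Assumption (A1) now guarantees that each $\widehat{I}_{j}$ has a good filtration as a rational $G$-module. The standard good-filtration criterion against Weyl modules supplies both $\operatorname{Ext}^{i}_{G}(V(\sigma),\widehat{I}_{j})=0$ for all $i>0$ and $\operatorname{Hom}_{G}(V(\sigma),\widehat{I}_{j})\cong[\widehat{I}_{j}:H^{0}(\sigma)]$. Consequently the spectral sequence collapses onto the row $i=0$ and yields $\operatorname{Ext}^{n}_{G}(V(\sigma)^{[1]},N_{\mathrm{id}}(\gamma))\cong[\widehat{I}_{n}:H^{0}(\sigma)]$ for $n$ even, while both sides vanish in odd degrees by parity.

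The main technical point I anticipate is the bookkeeping with the Frobenius twist in the spectral sequence, so that the $E_{2}$-identification is genuinely at the $G$-module level rather than only at the level of $B$; this is what allows the good-filtration input to be applied. The presence of a full $G$-module identification already in Theorem~\ref{thm:Frobnonshiftrealization} is therefore crucial, and means that no extra rigidity argument along the lines of Proposition~\ref{module-isomorphism} is needed here on the Frobenius side.
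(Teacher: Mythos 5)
Your proof is correct and is exactly the route the paper intends: the paper does not write out a separate argument for this corollary, stating only that the quantum corollaries ``have analogs in the case of Frobenius kernels via the use of good filtrations,'' and that is precisely what you supply. You correctly set up the Grothendieck/LHS spectral sequence attached to the composition $\operatorname{Hom}_{G}(V(\sigma)^{[1]},-)=\operatorname{Hom}_{G/G_{1}}(V(\sigma)^{[1]},-)\circ\operatorname{Hom}_{G_{1}}(k,-)$, untwist via $G/G_{1}\cong G^{(1)}$, feed in Theorem~\ref{thm:Frobnonshiftrealization} at $w=\operatorname{id}$, and then replace the complete-reducibility collapse used in the quantum proof of Theorem~\ref{thm:cohoquantumGcompfactors} with the Cline--Parshall--Scott/Donkin vanishing $\operatorname{Ext}^{i}_{G}(V(\sigma),E)=0$ for $i>0$ when $E$ has a good filtration, together with $\dim\operatorname{Hom}_{G}(V(\sigma),E)=[E:H^{0}(\sigma)]$; assumption (A1) is exactly what guarantees that $\operatorname{ind}_{P_{J}}^{G}(S^{\bullet}(\mathfrak{u}_{J}^{*})\otimes k_{\gamma})$ has such a filtration. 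Your observation that the identification in Theorem~\ref{thm:Frobnonshiftrealization} is already a full $G$-module isomorphism (so no analog of Proposition~\ref{module-isomorphism} is needed on this side) is also a fair and accurate remark.

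One small caveat worth flagging, though it does not affect the logic of your argument: in the statement of the corollary the module $\widehat{I}$ is written as $\operatorname{ind}_{P_{J}}^{G}\bigl(S^{n/2}(\mathfrak{u}_{J}^{*})^{[1]}\otimes k_{\gamma}\bigr)$, with a Frobenius twist applied to $S^{n/2}(\mathfrak{u}_{J}^{*})$ but not to $k_{\gamma}$ and \emph{inside} the induction. This does not match the output of Theorem~\ref{thm:Frobnonshiftrealization}, which puts the twist outside the induced module, nor does it match what your spectral-sequence calculation produces after untwisting, namely $\operatorname{ind}_{P_{J}}^{G}\bigl(S^{n/2}(\mathfrak{u}_{J}^{*})\otimes k_{\gamma}\bigr)$ with no twist; induction and Frobenius twist do not commute over a field of positive characteristic, so these are genuinely different $G$-modules. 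Your $\widehat{I}_{j}$ (the untwisted form) is the one for which the good-filtration argument and the equality with $\operatorname{Hom}_{G}(V(\sigma),-)$ are valid, and it appears the extra $[1]$ in the corollary's displayed $\widehat{I}$ is carried over from the quantum Corollary~\ref{cor:nonshiftedextrealization}, where the quantum Frobenius twist of a $\mathcal{U}(\mathfrak{g})$-module is innocuous at the level of $G$-modules. So your identification is the correct one; just be aware that the printed statement's placement of $[1]$ should be read as the untwisted $\operatorname{ind}_{P_{J}}^{G}(S^{n/2}(\mathfrak{u}_{J}^{*})\otimes k_{\gamma})$ to be consistent with Theorem~\ref{thm:Frobnonshiftrealization}.
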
 

\begin{corollary}\label{cor:Frobcompfactor2} Assume that (A1)  holds.
Let $p>h$ and ${\mathcal O}:={\mathcal O}_{J}$ be a Richardson orbit in ${\mathcal N}$ with 
moment map $\Gamma:G\times_{P_J} {\mathfrak u}_{J}\rightarrow G\cdot {\mathfrak u}_{J}$. Set 
$\epsilon_{J}=(p-1)\sum_{\alpha\in J}\omega_{\alpha}$. Then 
\begin{itemize}
\item[(a)] $[{k}[G\times_{P_{J}}{\mathfrak u}_{J}]:H^{0}(\sigma)]=\dim \operatorname{Ext}^{2\bullet}_{G}
(H^{0}(\epsilon_{J})\otimes V(\sigma)^{[1]},H^{0}(\epsilon_{J})).$
\item[(b)] Furthermore, if $\Gamma$ is a resolution of singularities and $(A2)$ holds then 
$$[{k}[\overline{\mathcal O}]_{\bullet}:H^{0}(\sigma)]=\dim \operatorname{Ext}^{2\bullet}_{G}
(H^{0}(\epsilon_{J})\otimes V(\sigma)^{[1]},H^{0}(\epsilon_{J})).$$
In particular when $J=\varnothing$,  (A1) and (A2) are  satisfied, and the moment map is a resolution of singularities, then
$$[{k}[{\mathcal N}]_{\bullet}:H^{0}(\sigma)]=\dim \operatorname{Ext}^{2\bullet}_{G}(V(\sigma)^{[1]},{\mathbb C}).$$  
\end{itemize} 
\end{corollary}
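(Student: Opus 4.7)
The plan is to mimic the quantum argument used for Corollary~\ref{cor:cohoquantum}, replacing Theorem~\ref{thm:quantumrealizereg} with its Frobenius-kernel analog Theorem~\ref{thm:Frobrealization2}. The first step is to take the algebra $A_{\mathcal O}:=H^{0}(\epsilon_J)^{*}\otimes H^{0}(\epsilon_J)\cong \operatorname{End}_{k}(H^{0}(\epsilon_J))$ produced by Theorem~\ref{thm:Frobrealization2}, so that $\operatorname{H}^{2\bullet+1}(G_{1},A_{\mathcal O})=0$ and $\operatorname{H}^{2\bullet}(G_{1},A_{\mathcal O})\cong k[G\times_{P_{J}}\mathfrak u_{J}]^{[1]}$ as rational $G$-modules (where $G$ acts through $G/G_1\cong G^{(1)}$). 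By finite-dimensional tensor-hom adjunction, I rewrite the target Ext group as
\begin{equation*}
\operatorname{Ext}^{n}_{G}(H^{0}(\epsilon_{J})\otimes V(\sigma)^{[1]},H^{0}(\epsilon_{J}))\cong \operatorname{Ext}^{n}_{G}(V(\sigma)^{[1]},A_{\mathcal O}),
\end{equation*}
so that the geometric object $A_{\mathcal O}$ of Theorem~\ref{thm:Frobrealization2} appears in the second argument.

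Next I invoke the Lyndon-Hochschild-Serre spectral sequence for the normal subgroup scheme $G_{1}\lhd G$:
\begin{equation*}
E_{2}^{i,j}=\operatorname{Ext}^{i}_{G/G_{1}}(V(\sigma)^{[1]},\operatorname{H}^{j}(G_{1},A_{\mathcal O}))\Rightarrow \operatorname{Ext}^{i+j}_{G}(V(\sigma)^{[1]},A_{\mathcal O}).
\end{equation*}
Identifying $G/G_{1}$ with $G$ via Frobenius and untwisting, for $j=2r$ the term becomes $\operatorname{Ext}^{i}_{G}(V(\sigma),\operatorname{ind}_{P_{J}}^{G} S^{r}(\mathfrak u_{J}^{*}))$, while odd rows vanish by Theorem~\ref{thm:Frobrealization2}(a). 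Assumption (A1) guarantees that $\operatorname{ind}_{P_{J}}^{G} S^{\bullet}(\mathfrak u_{J}^{*})$ has a good filtration, so the standard vanishing $\operatorname{Ext}^{i}_{G}(V(\sigma),-)=0$ for $i>0$ against modules with a good filtration forces the spectral sequence to collapse on the vertical edge.

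The collapse yields
\begin{equation*}
\operatorname{Ext}^{2r}_{G}(V(\sigma)^{[1]},A_{\mathcal O})\cong \operatorname{Hom}_{G}(V(\sigma),\operatorname{ind}_{P_{J}}^{G} S^{r}(\mathfrak u_{J}^{*})),
\end{equation*}
and the right hand side computes the good-filtration multiplicity $[\operatorname{ind}_{P_{J}}^{G} S^{r}(\mathfrak u_{J}^{*}):H^{0}(\sigma)]$ via $\dim\operatorname{Hom}_{G}(V(\sigma),H^{0}(\tau))=\delta_{\sigma,\tau}$. Assembling the two isomorphisms proves (a). For (b), the hypotheses that $\Gamma$ is a resolution together with (A2) give $k[\overline{\mathcal O}]\cong k[{\mathcal O}]\cong k[G\times_{P_{J}}\mathfrak u_{J}]$ (as in the justification used for Theorem~\ref{thm:Frobrealization2}), which transports the multiplicity statement onto $\overline{\mathcal O}$. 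The case $J=\varnothing$ is the specialization $\epsilon_{J}=0$, $H^{0}(\epsilon_{J})=k$, $\mathfrak u_{J}=\mathfrak u$, $\overline{\mathcal O}=\mathcal N$, with the Springer resolution playing the role of $\Gamma$.

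The main obstacle I anticipate is bookkeeping for the Frobenius twist: one must carefully untwist the $G/G_{1}$-action on $\operatorname{H}^{j}(G_{1},A_{\mathcal O})$ so that the Ext against $V(\sigma)^{[1]}$ reduces to Ext against $V(\sigma)$ with a genuinely untwisted induced module, and only then may (A1) be applied to force the collapse. Everything else is formal once Theorem~\ref{thm:Frobrealization2} is in hand.
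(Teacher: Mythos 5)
Your proposal is correct and matches the approach the paper intends: the paper states that these Frobenius-kernel corollaries hold "via the use of good filtrations," and your argument makes that precise by replacing the complete-reducibility collapse from the quantum case (Theorem~\ref{thm:cohoquantumGcompfactors}) with the good-filtration vanishing $\operatorname{Ext}^{i}_{G}(V(\sigma),-)=0$ against $\operatorname{ind}_{P_{J}}^{G}S^{\bullet}(\mathfrak{u}_{J}^{*})$, supplied by (A1), exactly as required. The tensor-hom rewriting, the Lyndon-Hochschild-Serre collapse along $j$-odd and $i>0$, and the identification of the $\operatorname{Hom}$-space with good-filtration multiplicity are all the right steps.
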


\begin{corollary} \label{cor:Frobcompfactor3} 
Let $p>h$, $J\subseteq \Delta$, $w\in {}^{J}W$ 
and $\epsilon_{J}=(p-1)\sum_{\alpha\in J}\omega_{\alpha}$.
Assume that (A1) holds.  
Moreover, let $\gamma\in X^{+}$ such that all weights of $T_{J}(\epsilon_{J}-w_{0,J}\epsilon_{J}+w_{0,J}(w\cdot 0))\otimes k_{p\gamma}$ are in $X^{+}$. Set 
$$N_w=\operatorname{ind}_{U_{\zeta}({\mathfrak p}_{J})}^{U({\mathfrak g})} (T_{J}(\epsilon_{J}-w_{0,J}\epsilon_{J}+w_{0,J}(w\cdot 0))\otimes k_{p\gamma}).$$ 
Then 
\begin{equation*}
[\widehat{I} :H^{0}(\sigma)]=
\begin{cases} \dim \operatorname{Ext}^{n}_{G}
(V(\sigma)^{[1]},  N_w)  
&  \text{ if }\ n-l(w) \equiv 0 \ (\operatorname{mod }2),       \\
0   & \text{ otherwise}
\end{cases} 
\end{equation*} 
where $\widehat{I}=\operatorname{ind}_{P_{J}}^{G} (S^{\frac{n-l(w)}{2}}({\mathfrak u}_{J}^{*})\otimes k_{\gamma})$. 
\end{corollary}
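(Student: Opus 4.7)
The plan is to adapt the argument of Theorem~\ref{thm:cohoquantumGcompfactors} to the Frobenius kernel setting, using the good filtration machinery in place of the complete reducibility of finite-dimensional $G$-modules that is available in characteristic zero. The key substitute input is assumption (A1), which guarantees that the target of the desired multiplicity formula has a good filtration.

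First I would apply the Lyndon-Hochschild-Serre spectral sequence for the normal subgroup scheme $G_1 \trianglelefteq G$ with quotient $G/G_1 \cong G^{(1)}$:
\begin{equation*}
E_2^{i,j} = \on{Ext}^i_{G/G_1}\!\bigl( V(\sigma)^{[1]},\, \on{H}^j(G_1, N_w) \bigr) \Longrightarrow \on{Ext}^{i+j}_G\!\bigl( V(\sigma)^{[1]},\, N_w \bigr).
\end{equation*}
Via the Frobenius identification of $G/G_1$ with $G$, the $E_2$-page rewrites as $\on{Ext}^i_G\!\bigl( V(\sigma),\, \on{H}^j(G_1, N_w)^{[-1]} \bigr)$, where the $[-1]$-untwist makes sense because $G_1$ acts trivially on its own cohomology, so the $G$-structure on $\on{H}^j(G_1, N_w)$ factors through Frobenius.

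Next, Theorem~\ref{thm:Frobshiftrealization} (which itself relies on (A1)) identifies the coefficient system:
\begin{equation*}
\on{H}^j(G_1, N_w)^{[-1]} \;\cong\; \widehat{I} \;=\; \on{ind}_{P_J}^G\!\bigl( S^{(j-l(w))/2}(\mathfrak{u}_J^*) \otimes k_\gamma \bigr)
\end{equation*}
when $j - l(w)$ is even, and vanishes otherwise. By (A1), the module $\widehat{I}$ carries a good filtration. The standard good filtration Ext-vanishing (cf.~\cite[II 4.13, II 4.16]{Jan1}) then yields $\on{Ext}^i_G(V(\sigma), \widehat{I}) = 0$ for all $i > 0$ together with the identity $\dim \on{Hom}_G(V(\sigma), \widehat{I}) = [\widehat{I} : H^0(\sigma)]$. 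Consequently $E_2^{i,j}$ vanishes whenever $i > 0$, so the spectral sequence collapses onto the edge $i = 0$, producing
\begin{equation*}
\on{Ext}^n_G\!\bigl( V(\sigma)^{[1]}, N_w \bigr) \;\cong\; \on{Hom}_G\!\bigl( V(\sigma),\, \on{H}^n(G_1, N_w)^{[-1]} \bigr),
\end{equation*}
whose dimension equals $[\widehat{I} : H^0(\sigma)]$ exactly when $n - l(w)$ is even, and zero otherwise.

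The main obstacle, and essentially the only nontrivial point, is ensuring the vanishing of the higher-$i$ terms $E_2^{i,j}$ of the spectral sequence. In the characteristic zero setting of Theorem~\ref{thm:cohoquantumGcompfactors} one simply invokes complete reducibility of finite-dimensional $G$-modules, but in positive characteristic this fails; the remedy is precisely the good filtration input supplied by (A1). The remaining bookkeeping consists of tracking Frobenius twists carefully and confirming that (A1) is being applied at every $j$ where the coefficient module is nonzero, both of which are routine.
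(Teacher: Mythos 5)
Your proposal is correct and fills in exactly the argument the paper sketches: in Section~7.4 the authors state only that the quantum corollaries have Frobenius-kernel analogs ``via the use of good filtrations,'' and your LHS spectral sequence for $G_1\trianglelefteq G$ combined with Theorem~\ref{thm:Frobshiftrealization}, the good-filtration Ext-vanishing $\Ext^{i}_{G}(V(\sigma),\widehat{I})=0$ for $i>0$, and the identity $\dim\hom_{G}(V(\sigma),\widehat{I})=[\widehat{I}:H^{0}(\sigma)]$ is precisely that substitution of good-filtration theory for the complete reducibility used in Theorem~\ref{thm:cohoquantumGcompfactors}. This matches the paper's intended approach.
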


\section{Appendix: Hopf Algebra Actions on Cohomology} \label{Hopf-algebra-action}

\subsection{}\label{sec:appendix} Let $H$ be an arbitrary Hopf algebra over a commutative ring $\bfk$. As in  \cite[4.1]{Lin1} and \cite{LN}, the left adjoint action of   $H$  on itself  is defined by  
\[ h\cdot x=\sum_{(h)} h_{(1)}xS(h_{(2)}) \]
for all $ h\in H$ and $x\in R$ with $\Delta(h)=\sum_{(h)}h_{(1)}\otimes h_{(2)}$ (using the Sweedler notation). Here $S$ is the antipode. 
It is easily checked that 
\[ h\cdot (x_1x_2)=\sum_{(h)}(h_{(1)}\cdot x_1)(h_{(2)}\cdot x_2)\]
 making $H$ into an $H$-module algebra. Any  (unital) $\bfk$-subalgebra $D\subseteq H$ stable 
	under this action is a normal subalgebra of $H$ regarded as an augmented (supplemented) algebra 
	in sense of \cite{CE} with the augmentation $\epsilon: D\to R$ being the restriction of the counit of $H$.  One can similarly define the right adjoint action making $H$ as a right $H$-module.

\subsection{} \label{sec:2.5} In \cite{LN}, a Hopf algebra action on cohomology  of a 
Hopf algebra is defined. More general Hopf algebra actions on cohomology of 
augmented algebras were given in \cite{BNPP}. 
We outline the setup here for later use in this section (and in this paper).  For simplicity, all algebras in the rest of this section are over an arbitrary field $\bfk$.  Let $H$ be a Hopf algebra and $A$ be an augmented algebra with a {\em right} $H$-module structure, i.e., 
$A$ is a right $H$-module such that both the
multiplication $ A\otimes A\rightarrow A$, 
the augmentation 
$ \epsilon: A\rightarrow \bfk$, and the unit
$u: \bfk\to A$  are $H$-module homomorphisms.
In such case, we call $A$ a right $H$-module
augmented algebra. This is equivalent that 
$A$ is an augmented algebra object in the
tensor category of right $H$-modules. 
One forms the smash product algebra $ H\# A$
which is the vector space $ H\otimes A$ 
with multiplication given by
\[ (h'\#a)(h\# b)=\sum_{(h)}h'h_{(1)}\#(a\cdot h_{(2)})b.\]  

 For a left $H$-module augmented algebra $A$,
 one can similarly define $ A\# H$. If $A$ is
 a Hopf algebra with the comultiplication  $A\to A\otimes A$ being 
 an $H$-module homomorphism and $H$ is
 cocommutative, then $H\# A$ is a Hopf algebra
 containing both $A$ and $H$ as Hopf
 subalgebras with $A$ being normal. 
For a general augmented algebra $A$, $H\# A$
is an augmented algebra and $A$ is a normal
subalgebra \cite[2.1]{ABG} such that the
quotient 
$(H\# A)/\!\!/A:=(H\# A)/\langle A^+\rangle
\cong H$ as augmented algebras.  Note that 
$ (H\# A)/\!\!/A\cong (H\# A)\otimes_A \bfk$
as 
augmented algebras.

A left $A$-module $X$ is said to have a
compatible left $H$-module action if it
extends to a left $H\# A$-module. However, 
the condition is  slightly different from the
right module structure 
\[ a(h(x))=\sum_{(h)}h_{(1)}((a\cdot h_{(2)})x).\]
This is equivalent to $M$ being a left 
$ H\# A$-module. Similarly, one can define 
a right $A$-module $X$ with a compatible 
right $H$-module action. This is equivalent 
to $X$ being a right $ H\#A$-module.  
One notes that   right $H\#A$-modules are 
just  the right module objects for the 
algebra object $A$ in the tensor category of
right $H$-modules. One can also define a left
$A$ module $X$ with a compatible right
$H$-module structure. This is equivalent to
$X$ being a left module object  of the algebra
object $A$ in the tensor category of right
$H$-modules.

If $A$ and $A'$ are two right $H$-module algebras, we say that   an $A'$-$A$-bimodule $X$ has a compatible right $H$-module structure if 
\[ (a'xa)\cdot h=\sum_{(h)}(a'\cdot h_{(1)})(x\cdot h_{(2)})(a\cdot h_{(3)}).
\]
This is equivalent to $X$ being a bimodule object of the two algebra objects $A'$ and $A$ in the tensor category of $ H$-modules.
In case $A$ has  a trivial right $H$-module structure, $H\# A=H\otimes A$ is simply the tensor product algebra. On any left (or right) $H\# A$-module $X$, the actions of $ H$ and $A$ commute.  
 
Let $X$ be a left $A$-module with a compatible right $H$-module structure, i.e., the structure map $A\otimes X\to X$ is a homomorphism of $H$-modules. This means that $X$ is a left $A$-module in the tensor category of right $H$-modules. If $Y $ is an $A'$-$A$-bimodule with a compatible right $H$-module structure (i.e., $A'\otimes Y\otimes A \to Y$ is a homomorphism of right $H$-modules,  then  there is a right $H$-module structure on $Y\otimes_{A}X$ given via the tensor product of right $H$-modules.
This setting is equivalent to defining the tensor product over an algebra object $A$ of a left $A$-module object $X$ and a right $A$-module $Y$ in the tensor category of right $H$-modules. In particular, $Y\otimes_A X$ also has a left $A'$-module structure compatible with the right $H$-module structure.

We now consider the case $B=H\# A$. Then 
$Y=B$ is a $B$-$A$-bimodule with left and
right multiplications in $B$ with $A$ being 
a subalgebra of $B$.
The Hopf algebra $H$ also acts on $Y$ by right multiplication. Thus, 

\begin{eqnarray*} ((h'\# a')a)h&=& (h'\# (a'a))h \\
&=&\sum_{(h)}h'h_{(1)}\# ((a'a)\cdot h_{(2)}) \\
&=&\sum_{(h)}h'h_{(1)}\# ((a'\cdot h_{(2)})(a\cdot h_{(3)}) \\ 
&=&\sum_{(h)}(h'\#a')h_{(1)})(1\# (a\cdot h_{(2)})) \\
&=&\sum_{(h)}(h'\#a')h_{(1)}) (a\cdot h_{(2)}).
\end{eqnarray*}

We now consider  the standard Hochschild complex $S_n(A)=A\otimes A^{\otimes n}\otimes A $ of $A$-$A$-bimodules \cite[IX.6]{CE}. 
For any left $A$-module $N$, the complex of left $ A$-modules $C_\bullet (N)=S_\bullet(A)\otimes_{A}N$ is an $A$-projective resolution of the left $A$-module $N$ (using \cite[IX.6 (2)]{CE}).

The Hopf algebra $H$ acts on $S_n(A)$ from the right  as the tensor product of right $H$-modules making $A\otimes A^{\otimes n}\otimes A $ compatible with the $A$-$A$-bimodule structure. 
 By writing down the differentials 
 $d_n: S_n(A) \rightarrow S_{n-1}(A) $ 
 in the standard Hochschild complex,  
 one notices that  each $h\in H$ commutes 
 with the differential $d_n$.  
 By applying the functor $-\otimes_A \bfk$,
 we get an $A$-projective resolution  
 $C_\bullet(\bfk)=A\otimes A^{\otimes \bullet}
 \rightarrow \bfk$ of the trivial $A$-module
 $\bfk$ (assuming $A$ is an augmented algebra).
 The $H$-module structure on $ C_n(\bfk)$ is still given by the tensor product structure.  

 Applying the exact functor $B\otimes_A-$  to the complex $C_\bullet(\bfk)$, one  gets  a complex of $B$-modules 
$D_\bullet= B\otimes_{A}C_\bullet(\bfk)=B\otimes A^{\otimes n}$ which is a $B$-projective resolution of $ B\otimes_A \bfk$. 

The right $H$-module  structure on $ C_n(\bfk)$ does not commute with the left $A$-action. With $H$ acting on $B$ via right multiplication (identifying  $H$ with the subalgebra $H\# 1$ of $B$), the tensor product $H$-module structure on $ B\otimes C_n(\bfk)$ is $A$-balanced, i.e.,
the quotient map $ \pi: B \otimes C_n(\bfk)\rightarrow B\otimes_A C_n(\bfk)$ satisfy the following 
\begin{eqnarray*} \pi((b\otimes a c)\cdot h)&=&\pi(\sum_{(h)}(bh_{(1)})\otimes ((ac)\cdot h)) \\
&=&\pi(\sum_{(h)}(bh_{(1)})\otimes ((a\cdot h_{(2)})(c\cdot h_{(3)})))\\
&=&\pi(\sum_{(h)}(bh_{(1)}(a\cdot h_{(2)}))\otimes (c\cdot h)) \\
&=&\pi(\sum_{(h)}((ba)\cdot h_{(1)}))\otimes (c\cdot h_{(2)}))\\
&=&\pi(((ba)\otimes c)\cdot h).
\end{eqnarray*}
Hence,  the $H$-right module structure on the tensor product $B\otimes_{A}C_n(\bfk)$ is well defined and commutes with the left $B$-module structure via left multiplication. Then $H$ acts on each term of $D_n$ from right commuting with the left $B$-module structure and compatible with the right $A$-module structure. Moreover, the action of $h \in H$  commutes with the differentials, and one has a $B$-projective resolution 
\[D_{\bullet} \rightarrow B\otimes _A \bfk.\]
With $k$ being a trivial $H$-module, $H$ acts on each term of the complex and commutes with the differentials and with the left $B$-module structure. Note that the right action of $H$ on $B\otimes_A \bfk$ is given by 
\[(b\otimes_{A} 1)\cdot h = \sum_{(h)} (b\cdot h_{(1)})\otimes_{A} (1\cdot h_{(2)})=\sum_{(h)}(bh_{(1)}\otimes_{A} (\epsilon( h_{(2)})1)\\
=bh\otimes_A 1\]
such that each $h\in H$ acts on the $B$-projective resolution $ D_\bullet \rightarrow B\otimes_A \bfk$  as chain map of left $B$-modules.  In particular, for any  left $B$-module $M$, each $h \in H$ acts on the complex $ \on{Hom}_B(D_n, M)$ from left given by 
\[ (h\phi)(x)=\phi(x\cdot h), \quad  \forall h \in H ,\ x \in D_n.\]
This defines a left action of $H$ on $\Ext^n_B(B\otimes_A \bfk, M)$ making it a left $H$-module. We claim that this action of $H$ is exactly the left action of $ B/\!\!/A$-action on $ \Ext^n_B(B/\!\!/A, M)$ as described in \cite[XVI Thm 6.1]{CE}. 
Each $h\in H$ defines a left $B$-module homomorphism $B/\!\!/A\rightarrow B/\!\!/A$ by 
$ (b\otimes_A 1)\mapsto (b\otimes_A 1)\cdot h=bh\otimes_A 1$. For any $B$-projective resolution $P_\bullet \rightarrow B\otimes_A \bfk$, there is a lift $\tilde{h}_\bullet: P_\bullet\rightarrow P_\bullet$ as a chain map of $B$-module, which defines the map $ h: \Ext_B^\bullet(B/\!\!/A, M)\rightarrow \Ext_B^\bullet(B/\!\!/A, M)$, which is independent of the choice of the projective resolution and the lift $ \tilde{h}_\bullet $ by using the Comparison Theorem for Resolutions \cite{W}. The $h$ action on the complex $ B\otimes_A C_\bullet(\bfk)$ from the right $H$-module is such a lift and thus defines the same action on $ \Ext_B^{\bullet}(B/\!\!/A, M)$.

 On the other hand,  we have the natural isomorphism of $H$-module homomorphism
\[ \hom_{B}(B\otimes_A C_n(\bfk), M)\stackrel{\rho}{\cong}  \hom_{A}( C_n(\bfk), M)=\hom_{k}(A^{\otimes n}, M)\]
such that 
\begin{eqnarray*} 
\rho(h\phi)(x)&=&\phi((1\otimes_A x)h)\\
&=&\sum_{(h)}\phi(h_{(1)}\otimes_A (x\cdot h_{(2)}))\\
&=&\sum_{(h)}h_{(1)}\phi(1\otimes_{A} (x\cdot h_{(2)})) \\
&=&\sum_{(h)}h_{(1)}(\rho(\phi)(x\cdot h_{(2)})).
\end{eqnarray*}
The right hand side is exactly the $H$-action described in \cite[2.8]{BNPP} on the reduced bar resolution. This verifies the claim in  \cite{BNPP} that the $H$-action using the reduced bar resolution on $\Ext_{B}^\bullet (B/\!\!/A, M)$ is the same as $H=B/\!\!/A$ action described in \cite[XVI Thm 6.1]{CE}.

The advantage of using $H$-action on the bar resolution (or standard) resolution of $A$ is that the Hopf algebra action of $H$ respects the cup product in cohomology.  We summarize our observations as follows. 

\begin{theorem} Let $H$ be a Hopf algebra and $A$ be an augmented algebra which is a right $H$-module algebra over a field $ \bfk$. Then for any left $A$-module $M$ with a compatible left $H$-module structure  there is a natural (functorial in $M$) left $H$ action  on 
$\on{H}^*(A, M)$. Furthermore, under the conditions that $A$ is a Hopf algebra which is also a right $H$-module algebra such that the co-multiplication of $A$ is also a homomorphism of $H$-modules and  $M$ is a left $A$-module algebra extending to a left $H\#A$-module. Then under the cup product \cite[XI.4]{CE}, 
$\on{H}^*(A, M)$ a graded left $ H$-module algebra.    
\end{theorem}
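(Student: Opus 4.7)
The proposal builds on the construction already carried out in Section~\ref{Hopf-algebra-action} before the theorem statement. For the first claim, essentially the whole construction---a right $H$-action on the bar complex $C_\bullet(\bfk) = A\otimes A^{\otimes\bullet}$ via the tensor-product $H$-module structure, the verification that this action commutes with the Hochschild differentials (which is precisely the statement that the multiplication and augmentation of $A$ are $H$-module maps), and the transfer to a left $H$-action on $\operatorname{Hom}_A(C_n(\bfk), M)$ via
\[(h\cdot\phi)(x) = \sum_{(h)} h_{(1)} \cdot \phi(x\cdot h_{(2)})\]
---has been done. What remains for Part 1 is to verify by direct Sweedler calculation that this formula (i) yields an $A$-linear map (using the compatibility $a\cdot(h\cdot m) = \sum_{(h)}h_{(1)}\cdot((a\cdot h_{(2)})\cdot m)$ built into the $H\# A$-module structure on $M$), (ii) defines a left $H$-action (from $\Delta_H$ being an algebra map together with associativity of the right $H$-action on $C_\bullet(\bfk)$), and (iii) commutes with the cochain coboundary, so that it descends to a functorial left $H$-action on $\HH^\ast(A, M)$. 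That this action agrees with the one obtained via the smash-product description $\HH^\ast(A, M) \cong \operatorname{Ext}_B^\ast(H, M)$ (with $B = H\# A$) is established, as in the preceding discussion, by the fact that the tensor-product $H$-action on $B\otimes_A C_\bullet(\bfk)$ lifts right multiplication by $h$ on $B\otimes_A\bfk \cong H$, and \cite[XVI Thm 6.1]{CE} guarantees independence of the chain-level choices.

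For Part 2, the Hopf structure on $A$ furnishes a diagonal approximation $\Delta_C: C_\bullet(\bfk) \to C_\bullet(\bfk)\otimes C_\bullet(\bfk)$ built componentwise from $\Delta_A$, with the target carrying the $A$-action via $\Delta_A$, and for $M$ an $A$-module algebra this underpins the cup product
\[(\phi\cup\psi)(x) := \mu_M\circ(\phi\otimes\psi)\circ\Delta_C(x),\]
independent of the choice of $\Delta_C$ up to chain homotopy \cite[XI.4]{CE}. By the hypothesis that $\Delta_A$ is a right $H$-module map, $\Delta_C$ is $H$-equivariant for the diagonal $H$-action on its target; and because $M$ is an $H\# A$-module algebra, $\mu_M$ is a left $H$-module map. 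A Sweedler-notation calculation using the iterated coproduct $\Delta_H^{(4)}(h)$ then yields the cochain-level identity $h\cdot(\phi\cup\psi) = \sum_{(h)}(h_{(1)}\cdot\phi)\cup(h_{(2)}\cdot\psi)$, which descends to $\HH^\ast(A, M)$ and completes the verification that it is a graded left $H$-module algebra.

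The main technical point is the Sweedler bookkeeping in Part 2. Naively distributing $\Delta_H^{(4)}(h)$ across the $\phi$-output, $\psi$-output, $x'$, and $x''$ legs on the two sides of the desired identity produces two orderings of the Sweedler indices that agree immediately when $H$ is cocommutative, but in general differ by a swap of the middle two legs. Reconciling them---either by choosing $\Delta_C$ so that the coproduct legs are interleaved in the order dictated by the $H$-module algebra axiom, or by invoking the chain-homotopy invariance of the cup product to absorb the discrepancy into a coboundary---is the crucial step. Once this cochain (or cohomological) identity is in hand, all the graded $H$-module algebra axioms for $\HH^\ast(A, M)$ follow at once from the corresponding axioms for $M$.
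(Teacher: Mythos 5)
For Part 1 your proposal tracks the paper's construction exactly: the paper's discussion preceding the theorem (the right $H$-action on the standard complex $C_\bullet(\bfk)$, the smash product $B=H\#A$ and its resolution $D_\bullet$, the isomorphism $\rho$, and the identification with the $B/\!\!/A$-action of Cartan--Eilenberg) is precisely the proof of Part 1, and the three verifications (i)--(iii) you list are indeed what remains and are routine. No issues here.

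For Part 2 you have correctly located the difficulty, but you do not close it, and neither of your two proposed repairs works as stated. Your diagnosis is right: with the conjugation-type action $(h\cdot\phi)(x)=\sum h_{(1)}\phi(x\cdot h_{(2)})$ one side of $h\cdot(\phi\cup\psi)=\sum(h_{(1)}\cdot\phi)\cup(h_{(2)}\cdot\psi)$ produces the ordering $h_{(1)}\otimes h_{(3)}\otimes h_{(2)}\otimes h_{(4)}$ and the other $h_{(1)}\otimes h_{(2)}\otimes h_{(3)}\otimes h_{(4)}$, and these agree on the nose only when $H$ is cocommutative. Your first fix (reinterleaving $\Delta_C$) cannot work, because the swap is in the iterated $\Delta_H$, not in $\Delta_C$; $\Delta_C$ is built from $\Delta_A$ and has no control over the $H$-legs. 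Your second fix (chain-homotopy invariance) is the right kind of idea but a name, not an argument: you need either to exhibit the homotopy, or to factor the cup product abstractly as $\mu_{M*}\circ\Delta_A^*\circ\top$ and check $H$-equivariance of each piece---and here the same swap resurfaces, because $\Ext^*_{A\otimes A}(\bfk,M\otimes M)$ carries two a priori different $H$-structures (restriction of the $H\otimes H$-structure along $\Delta_H$, versus the intrinsic one from $A\otimes A$ being an $H$-module algebra), and reconciling those is exactly the same problem. To be fair, the paper's own treatment of Part 2 is just the sentence ``the Hopf algebra action of $H$ respects the cup product in cohomology'' followed by ``we summarize our observations,'' so the gap is present there too; and in every application in the paper the $H$-action factors through the cocommutative quotient $\mathcal U(\mathfrak g_\bfk)\cong U_\zeta(\mathfrak g)/\!\!/u_\zeta(\mathfrak g)$, so the discrepancy vanishes where the result is used. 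A complete write-up should either add a cocommutativity hypothesis (on $H$, or on the quotient through which the action factors) or supply the explicit chain homotopy; as it stands the crucial identity is asserted rather than proved.
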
 

\subsection{Actions by subalgebras of Hopf algebras on cohomology} \label{sec:2.6} We now apply the above Hopf algebra actions on cohomology groups to cases which are not Hopf algebras. Let $P$ be a subalgebra of a Hopf algebra $H$. Then $P$ is an augmented algebra and the restriction to $P$ of the $H$-action on $A$, makes $A$ into a $P$-module (although one cannot technically call it a $P$-module algebra because $P$ is not necessarily a Hopf algebra). Then $P$  also acts on the cohomology $\Ext_{A}^n(k, M) $ for any left $A$-module $M$ with a compatible left $H$-module structure. 

 Note that $H$ is a right $H$-module algebra under the right adjoint action defined by 
\[ h'\cdot h=\sum_{(h)}S(h_{(1)})h'h_{(2)}.\]
In this case, we have \cite[Prop. 4.2]{Lin2}
\begin{equation}\label{rightadjoint}
 h'h=\sum_{(h)} h_{(1)}(h'\cdot h_{(2)}). 
 \end{equation}
  If $A$ is a subalgebra of $H$ and is $H$-stable under the right adjoint action, then $A$ is a normal subalgebra of $H$ as an augmented algebra.
 If $M$ is a left $H$-module, then the restriction to $A$ makes $M$ a left $A$-module which is compatible with $H$-action since
\[ a(h(m))=\sum_{(h)} h_{(1)}((a\cdot h_{(2)})m)\]
by using \eqref{rightadjoint}.  In this case, we can say more about the $H$-action on $ \on{H}^n(A, M)$. In the construction of the $H$-action on $\on{H}^n(A, M)$,
instead of forming the algebra $B=H\# A$ which includes $A$ as a subalgebra and 
applying the functor $ B\otimes_A-$ to the complex $C_\bullet(\bfk)$, we simply replace $B$ by $H$ and 
apply the functor $ H\otimes _A-$ again with the right $H$-module structure on $H$ by right 
multiplication. Assuming that $ H$ is flat as a right $A$-module, every computation in Section~
\ref{sec:2.5} still applies. In this case, $H\otimes \bfk=H/\!\!/A$. 
Thus  we get an $H$-action on $ \Ext_A^n(\bfk, M)=\Ext _H^n(H\otimes_{A}\bfk, M)$ which is the same 
as the $H$-action described in Section~\ref{sec:2.5} via the chain map 
$\phi\otimes_A 1: (H\#A)\otimes_A C_n(\bfk)\rightarrow H \otimes_A C_n(\bfk)$ with $\phi: H\# A\rightarrow H$ 
defined by $\phi(h\# a)=ha \in H$. Note that $ \phi $ is a homomorphism of $\bfk$-algebras, in 
particular, it is a right $A$-module homomorphism.  On the other hand, the $H$-action on
 $ \Ext_H^n(H/\!\!/A, M)$ defined in \cite{CE} is obtained by lifting the right multiplication of $H$ on $H/\!\!/A$ to 
 an $H$-projective resolution of $H/\!\!/A$. In this case, we consider the standard complex 
 $S_\bullet(H/\!\!/A)$ (but with a change of the differentials to $ (-1)^{n}d_n$ and the homotopy map 
 $s'_n: a_0\otimes\cdots \otimes a_{n+1}\mapsto a_0\otimes\cdots \otimes a_{n+1}\otimes 1$. 
 Then consider the complex  $ H\otimes_H S_\bullet(H/\!\!/A)\rightarrow  H\otimes_H H/\!\!/A$, which is 
an $H$-projective resolution of the $H$-module $H/\!\!/A$. The right $H/\!\!/A$-module structure (also an $H$-module structure)  on 
the resolution defines an $H$-action on $ \Ext_H^\bullet (H/\!\!/A, M)$ which identifies with the $H$-action on $\Ext_H^\bullet (H/\!\!/A, M)\cong \Ext_A^\bullet (\bfk, M)$. Hence, the restriction of the $H$-module 
structure on $ \Ext_A^\bullet (\bfk, M)$ to the subalgebra $ A \subseteq H$ is trivial. 

\begin{proposition} \label{triviality} If $A$ is a subalgebra of $H$ which is stable under the right adjoint $H$-action and $ H$ is a right flat $A$-module, then the subalgebra $A$ action on  $\on{H}^\bullet (A, M)$ is trivial when considered as restriction from the action of $H$.
\end{proposition}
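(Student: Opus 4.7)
The plan is to deduce Proposition~\ref{triviality} directly from the concrete description of the $H$-action that was set up in Section~\ref{sec:2.6}. The key observation is that under the flatness hypothesis the cohomology $\on{H}^\bullet(A,M)$ is being computed as $\on{Ext}_H^\bullet(H/\!\!/A,M)$, where the $H$-action is the classical one from \cite[XVI.6.1]{CE}, namely the one induced by right multiplication of $H$ on $H/\!\!/A$. Once this identification is in hand, the action of any $a\in A$ on $H/\!\!/A$ turns out to be scalar multiplication by $\epsilon(a)$, and this forces the induced action on $\on{Ext}$ to be trivial.

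First, I would spell out the identification. Since $H$ is right $A$-flat, the augmented complex $H\otimes_A C_\bullet(\bfk)\to H\otimes_A \bfk=H/\!\!/A$ is an $H$-projective resolution of $H/\!\!/A$, and the standard change-of-rings argument yields
\begin{equation*}
\on{H}^\bullet(A,M)=\on{Ext}_A^\bullet(\bfk,M)\cong \on{Ext}_H^\bullet(H/\!\!/A,M).
\end{equation*}
The discussion in Section~\ref{sec:2.6} comparing the resolutions $(H\#A)\otimes_A C_\bullet(\bfk)$ and $H\otimes_A C_\bullet(\bfk)$ via the algebra map $\phi:H\#A\to H$, $\phi(h\#a)=ha$, shows that the $H$-action coming from the $(H\#A)$-module structure on cohomology agrees under this isomorphism with the $H$-action on $\on{Ext}_H^\bullet(H/\!\!/A,M)$ produced by lifting right multiplication by $H$ on $H/\!\!/A$ to the given $H$-projective resolution.

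Next I would compute what right multiplication by $a\in A$ actually does on $H/\!\!/A=H\otimes_A \bfk$. For any $h\in H$,
\begin{equation*}
(h\otimes_A 1)\cdot a \;=\; ha\otimes_A 1 \;=\; h\otimes_A (a\cdot 1) \;=\; \epsilon(a)\,(h\otimes_A 1),
\end{equation*}
using that the trivial $A$-module $\bfk$ carries the augmentation action. Hence right multiplication by $a$ on $H/\!\!/A$ is the scalar endomorphism $\epsilon(a)\cdot \on{id}$. Since this scalar map is itself a chain map on any $H$-projective resolution (and any two lifts are chain homotopic), the induced endomorphism of $\on{Ext}_H^\bullet(H/\!\!/A,M)$ equals $\epsilon(a)\cdot \on{id}$. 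Translating back, the restriction of the $H$-action on $\on{H}^\bullet(A,M)$ to the subalgebra $A$ is exactly multiplication by $\epsilon(a)$, i.e., the trivial $A$-action, as desired.

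There is no genuine technical obstacle here: the real content has already been established in Section~\ref{sec:2.6}, where the two a priori different descriptions of the $H$-action on $\on{H}^\bullet(A,M)$ were reconciled. The only mild point to be careful about is verifying that the map $\phi:H\#A\to H$ used in the comparison is a homomorphism of right $A$-modules, which allows one to pass from the $(H\#A)$-resolution to the honest $H$-resolution of $H/\!\!/A$; the flatness hypothesis on $H$ over $A$ is what makes this $H$-resolution projective. After that, the triviality assertion reduces to the one-line scalar computation above.
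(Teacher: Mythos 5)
Your proof is correct and follows essentially the same route as the paper: both identify the $H$-action on $\on{H}^\bullet(A,M)$ with the Cartan--Eilenberg $H$-action on $\on{Ext}_H^\bullet(H/\!\!/A,M)$ induced by lifting right multiplication on $H/\!\!/A$ to a projective resolution, and then observe that the restriction to $A$ is trivial because that action factors through $H/\!\!/A$. The two presentations differ only in that you make the final step explicit via the scalar computation $(h\otimes_A 1)\cdot a=\epsilon(a)(h\otimes_A 1)$, whereas the paper concludes the same thing by noting the resolution carries a right $H/\!\!/A$-module structure.
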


If further $A\subseteq B$ are  subalgebras of $H$ and both are invariant under the right adjoint action of $H$, then $ A$ is a normal subalgebra of $B$. In this case, $ B/\!\!/A$ is also a right $H$-module algebra. Then for any left $H$-module $M$, there are 
$H$-actions  on each term of the spectral sequence described in \cite[Lemma 2.8.1]{BNPP} 
\[ E^{i,j}_2=\on{H}^i(B/\!\!/A, H^j(A, M))\Rightarrow \on{H}^{i+j}(B,M),\]
where $H$-modules and the differentials are $H$-module homomorphisms.

If $A$ is a Noetherian central subalgebra of $H$ then the  right $H$-adjoint action on $A$ is trivial. Hence, the action of $H$ on any projective $A$-module $P$ is trivial and on
$\hom_{\bfk}(P, M) $ is given by  $(h\cdot f)(x)=h\cdot (f(x))$ for all $ f \in \hom_{\bfk}(P, M)$ and $x\in P$. In particular, if $M$ is finitely generated and projective over $\bfk$ then 
\begin{equation} \hom_{\bfk}(P, M) \cong M\otimes \hom_{\bfk}(P, \bfk) 
\end{equation}
as $H$-modules. If $M$ is trivial as $A$-module and $P$ is finitely generated and projective over $A$,
then one has an isomorphism of $H$-modules
\begin{equation*} \hom_{A}(P, M) \cong M\otimes \hom_{A}(P, \bfk) 
\end{equation*}
for all $H$-modules which is a direct limit of submodules which are finitely generated and  projective over $\bfk$.
\begin{proposition} \label{tensor-identity} Let $\bfk$ be field. If $A$ is a finitely generated central subalgebra  of a Hopf algebra $H$ and $M$ is a locally finite left $H$-module  such that its restriction to $A$ is trivial, then one has an $H$-module isomorphism
 \begin{equation} \Ext_A^n(\bfk, M) \cong M\otimes \Ext_A^n(\bfk, \bfk) .
\end{equation}
\end{proposition}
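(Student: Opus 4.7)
The plan is to promote the Hom-level tensor identity recorded immediately before the proposition to an Ext-level identity via a projective resolution of the trivial $A$-module and the exactness of $M\otimes_\bfk-$ over a field.

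Centrality of $A$ in $H$ forces $A$ to be commutative, and since $A$ is finitely generated over the field $\bfk$, it is Noetherian. Thus the trivial $A$-module $\bfk$ admits a resolution $P_\bullet\twoheadrightarrow\bfk$ by finitely generated free $A$-modules. For each $P_n$, the discussion preceding the proposition yields an $H$-equivariant isomorphism
\begin{equation*}
\hom_A(P_n, M)\;\cong\;M\otimes\hom_A(P_n,\bfk),
\end{equation*}
valid because $M$ is $A$-trivial and locally finite. Naturality of this identification in $P_n$ assembles it into an $H$-equivariant isomorphism of cochain complexes $\hom_A(P_\bullet,M)\cong M\otimes\hom_A(P_\bullet,\bfk)$. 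Since $\bfk$ is a field, $M\otimes_\bfk-$ is exact and commutes with cohomology, so taking $n$-th cohomology of both sides produces the claimed $H$-module isomorphism
\begin{equation*}
\Ext^n_A(\bfk,M)\;\cong\;M\otimes\Ext^n_A(\bfk,\bfk).
\end{equation*}

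The main obstacle is verifying the $H$-equivariance of the Hom-level identity for a locally finite (not merely finite-dimensional) $M$. I would handle this by writing $M=\varinjlim M_i$ as the filtered colimit of its finite-dimensional $H$-submodules, each automatically $A$-trivial. For each finite-dimensional $M_i$, the identity $\hom_A(A^{r_n},M_i)\cong M_i\otimes\hom_A(A^{r_n},\bfk)$ reduces to $M_i^{r_n}\cong M_i\otimes\bfk^{r_n}$, whose $H$-equivariance with respect to the adjoint-action-based $H$-structure of Section~\ref{sec:2.5} is immediate from triviality of the right adjoint $H$-action on the central subalgebra $A$. Since $P_n$ is finitely presented, $\hom_A(P_n,-)$ commutes with filtered colimits, so the identity lifts from the $M_i$ to $M$, after which the cohomology argument above delivers the proposition.
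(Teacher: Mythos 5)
Your argument is correct and follows essentially the same line as the paper: the discussion preceding the proposition already establishes (via triviality of the right adjoint $H$-action on the central Noetherian $A$) the $H$-equivariant isomorphism $\hom_A(P,M)\cong M\otimes\hom_A(P,\bfk)$ for $P$ finitely generated projective over $A$ and $M$ locally finite and $A$-trivial, and the proposition is then obtained by applying this termwise to a projective resolution of $\bfk$ over $A$ and passing to cohomology, using exactness of $M\otimes_\bfk-$ over a field. Your explicit handling of the locally finite case via filtered colimits of finite-dimensional $H$-submodules and the finitely-presented hypothesis on $P_n$ matches the paper's remark that the Hom-level identity holds for $H$-modules that are direct limits of submodules finitely generated and projective over $\bfk$.
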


Let $N\subseteq A \subseteq B \subseteq H$ be subalgebras which are stable under the right adjoint action of $ H$ on $H$. Then $H$ acts on $ A'=A/\!\!/N \subseteq B'=B/\!\!/N$.  Then $H$ acts on each of the term of the spectral sequence 
\[E^{i,j}_2= \on{H}^i(B'/\!\!/A', \on{H}^j(A', M))\Rightarrow H^{i+j}(B', M)\]
for each $H$-module $M$ whose restriction to $N$ is trivial. 
If $A$ is a central subalgebra of $H$, then $H$ acts trivially on both $N$ and $A$, and $H/\!\!/N$ also acts on $A'=A/\!\!/N$ trivially. Furthermore, if $M$ is $ A$-trivial   then 
 \begin{equation} \Ext_{A'}^n(\bfk, M) \cong M\otimes \Ext_{A'}^n(\bfk, \bfk) 
\end{equation}
is an isomorphism of $H/\!\!/N$-module. In particular it is an isomorphism of $B'$-module. Therefore, the action of $H/\!\!/N$ on the tensor product module arises from the $H$-action on the tensor product.  
 
 \subsection{Module isomorphisms for parabolic subgroups} \label{parabolic}
 
 Let $ G$ be a connected  algebraic group over a field $\bfk$ of any characteristic. Moreover, let $G\on{-mod}$ be the category of finite dimensional rational $G$-modules.  Let $B\subseteq G$ be a Borel subgroup. Then $ G/B$ is a complete algebraic $\bfk$-variety. Thus the  global sections of the trivial line bundle is $\bfk$. In particular the induced representation $\on{ind}_B^G(\bfk)\cong \bfk$. Let $M$ be any rational $G$-module. Then we have $ \on{ind}_B^G(M)\cong M\otimes \on{ind}_B^G(\bfk)\cong M$.   If $ \mathcal C$ is any category, we use $ \on{Gpd}(\mathcal C)$ to denote the groupoid of $\mathcal C$, which has the same objects but taking isomorphisms only. 
 \begin{proposition} For any connected algebraic $\bfk$-group $G$ and $B$ be a Borel subgroup. If $ M$ and $ M'$ are two rational $G$-modules, then $M\cong M'$ in the category of $G$-modules if and only if $ \on{res}_B^G(M)\cong \on{res}_B^G(M')$ in the category of rational $B$-modules. In particular, the restriction functor $ \on{res}_B^G: G\on{-mod}\to B\on{-mod}$, which is not full in general, defines a fully faithful functor of groupoids 
 $\on{Gpd}(G\on{-mod})\to \on{Gpd}(B\on{-mod})$.
 \end{proposition}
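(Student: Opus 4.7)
The forward direction is immediate: any $G$-module isomorphism restricts to a $B$-module isomorphism. For the converse and, simultaneously, the fullness claim on groupoids, I would prove the stronger statement that any $B$-module isomorphism $\phi\colon \on{res}_B^G M\to \on{res}_B^G M'$ between rational $G$-modules is already a $G$-module map, hence a $G$-module isomorphism. Faithfulness of the restriction functor on groupoids is then automatic, since $\on{res}_B^G$ does not alter the underlying linear map.

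To establish this stronger statement, my plan is to exploit completeness of $G/B$ directly. Since $M$ and $M'$ are finite-dimensional rational $G$-modules, the assignment
\[
F\colon G\longrightarrow \on{Hom}_\bfk(M,M'),\qquad g\longmapsto g\circ \phi\circ g^{-1},
\]
is a morphism of algebraic varieties. The $B$-linearity of $\phi$ yields $b\phi b^{-1}=\phi$ for every $b\in B$, so $F(gb)=F(g)$ and $F$ factors as a morphism $\bar F\colon G/B\to \on{Hom}_\bfk(M,M')$. Since $G/B$ is complete (this is the content of the identity $\on{ind}_B^G(\bfk)\cong \bfk$ recorded immediately before the proposition) and $\on{Hom}_\bfk(M,M')$ is affine, $\bar F$ must be constant. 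Evaluating at the identity yields $F(g)=\phi$ for every $g\in G$, i.e.\ $g\phi=\phi g$; hence $\phi$ is $G$-equivariant, as required.

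The only delicate step is to verify that $F$ is genuinely a morphism of varieties and not merely a set-theoretic map; this reduces to the facts that the structure maps $G\times M\to M$ and $G\times M'\to M'$ are morphisms (by definition of a rational $G$-module) and that inversion on $G$ is a morphism. This is the main technical obstacle, but it is essentially routine once written out. I note that the same conclusion can alternatively be obtained abstractly from the Frobenius reciprocity adjunction $(\on{res}_B^G,\on{ind}_B^G)$: the tensor identity together with completeness show that the unit $\eta_N\colon N\to \on{ind}_B^G(\on{res}_B^G N)$ is a $G$-module isomorphism for every $N\in G\on{-mod}$, and the triangle identities then force the restriction of the $G$-map corresponding to $\phi$ under Frobenius reciprocity to equal $\phi$ on underlying vector spaces. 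This adjunction-theoretic form is the one that is well suited to the parabolic and quantum analogues used elsewhere in the paper.
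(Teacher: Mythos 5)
Your proof is correct and takes a genuinely different route from the paper's. The paper's proof applies $\on{ind}_B^G$ to the $B$-isomorphism $\phi$ and invokes the tensor identity $\on{ind}_B^G(\on{res}_B^G N)\cong N\otimes\on{ind}_B^G\bfk\cong N$ to transport the resulting $G$-isomorphism $\on{ind}_B^G(\phi)$ back to a $G$-isomorphism $M\to M'$; this is exactly the adjunction-theoretic reformulation you sketch in your final sentences, and it is the version the paper then abstracts to Hopf algebras in Proposition~\ref{module-isomorphism}. Your primary argument instead works directly with the geometry: the regular map $G/B\to\on{Hom}_\bfk(M,M')$, $gB\mapsto g\phi g^{-1}$, is constant because $G/B$ is complete and connected while $\on{Hom}_\bfk(M,M')$ is affine. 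This is more elementary and makes the role of completeness entirely transparent, though --- as you observe --- it does not carry over to the quantum/Hopf-algebra setting, where there is no underlying variety and the adjunction formulation is the one that generalizes. One further remark: your geometric argument never uses that $\phi$ is invertible, so it actually shows that every $B$-module homomorphism between objects of $G\on{-mod}$ is automatically $G$-equivariant, i.e.\ that $\on{res}_B^G$ is fully faithful on all of $G\on{-mod}$ and not merely on the associated groupoid. (The same strengthening falls out of the paper's adjunction argument once one checks that the constructed $G$-isomorphism coincides, as a linear map, with $\phi$.) This sharpens the statement as written and suggests the parenthetical ``not full in general'' deserves a second look.
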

 
 \begin{proof}
   If $ \phi: M\to M'$ is a $B$-module isomorphism, then $ \on{Ind}_B^G(\phi): \on{Ind}_B^G(M)\to  \on{Ind}_B^G(M')$ is an isomorphism of $ G$-modules.  Since both $M$ and $M'$ are $G$-modules, the tensor identity implies that $\on{Ind}_B^G(M)\cong \on{Ind}_B^G(\bfk)\otimes M\cong M$ and $\on{Ind}_B^G(M')\cong M'$.  Here we used the tensor identity \cite[I. 3.6]{Jan1} and the fact that $\on{Ind}_B^G(\bfk)=\bfk$. 
\end{proof}
 
 This can be extended to representations of Hopf algebras. 
 Assume that $ H$ is a Hopf algebra over $ \bfk$ and let $ \mathcal C(H)$ be a full tensor subcategory of $ H$-modules which is closed under tensor product (defined by a topology as described in \cite{Lin1}). If $ D$ is a Hopf subalgebra of $H$, consider the full subcategory $ \mathcal C(D)$ of $D$-modules such that the restriction functor 
 $ \on{res}_D^H: \mathcal C(H)\to \mathcal C(D) $ has a left adjoint functor $ \on{ind}_D^H:  \mathcal C(D)\to \mathcal C(H)
$. The argument in \cite{Lin1} implies that tensor identity holds  if $H$ has a bijective antipode. In fact this follows from the adjointness of functors by using the following: 
\vskip .15cm
\noindent
(1) $\on{Hom}_H(M, N)=\on{Hom}_\bfk(M, N)^{H}$;
\vskip .15cm
\noindent
(2) $\on{Hom}_H(M\otimes P^*, N)\cong \on{Hom}_H(M, N\otimes P)$ and $\on{Hom}_H({}^*\!P\otimes M, N)\cong \on{Hom}_H(M, P\otimes N)$ for all $ H$-modules $P$ which are finitely generated and projective over $ \bfk$. Here the $H$-module structures on $ {}^*P=\on{Hom}_{\bfk}(P, \bfk)$ (resp.  $ P^*=\on{Hom}_{\bfk}(P, \bfk)$)  are defined by $ (hf)(p)=f(S(h)p)$ (resp.  $ (hf)(p)=f(S^{-1}(h)p)$), where $M, N$ can be any $H$-modules. 
\vskip .15cm
\noindent
(3) $\on{ind}_D^H$ commutes with direct limits and  every object in the category $ \mathcal C(H)$  is a direct limit of objects that are finitely generated over $ \bfk$. The tensor identity can be stated in the following two ways: for any $ D$-module $N$ and $ H$-module $V$ in $ \mathcal C(H)$, 
\[ \on{ind}_D^H(N\otimes V)\cong \on{ind}_D^H(N)\otimes V\quad \text{ and } \quad \on{ind}_D^H(V\otimes N)\cong V\otimes \on{ind}_D^H(N).\]
This is a generalization of  \cite[I. 6.13]{Jan1}  from groups to Hopf algebras. The proof of  \cite[Prop. 3.7]{Lin1} reduces to continuous dual Hopf algebra and realizes $\on{ind}_D^H$ as cotensor product of comodules and uses \cite{PW}.

\begin{proposition} \label{module-isomorphism} Let $ \bfk$ be a field.
 Assume that $H$ is a Hopf algebra with bijective antipode and the category $ \mathcal C(H)$ has the property that  every object is locally finite. If $D$ is a Hopf subalgebra of $H$ such that $\on{ind}_D^H(\bfk)\cong \bfk$, then any two modules $V_{1}, V_{2},\in \mathcal C(H)$ are isomorphic if and only if $ \on{res}_D^H(V_{1})\cong  \on{res}_D^H(V_{2})$ as $ D$-modules. 
\end{proposition}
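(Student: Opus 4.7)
The plan is to exploit the adjointness between $\operatorname{res}_D^H$ and $\operatorname{ind}_D^H$ together with the generalized tensor identity that was just recalled in the excerpt. The ``only if'' direction is immediate since the restriction of an $H$-module isomorphism is automatically a $D$-module isomorphism, so the content of the proposition is the reverse implication.

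The key intermediate step will be to establish a natural $H$-module isomorphism
\[
\operatorname{ind}_D^H\bigl(\operatorname{res}_D^H(V)\bigr) \cong V
\]
for every $V \in \mathcal{C}(H)$. To produce this, view $\operatorname{res}_D^H(V) \cong \bfk \otimes V$ as a $D$-module, where $\bfk$ is the trivial $D$-module and $V$ is restricted to $D$, and apply the tensor identity (in the form $\operatorname{ind}_D^H(N \otimes V) \cong \operatorname{ind}_D^H(N) \otimes V$ for a $D$-module $N$ and an $H$-module $V$) with $N = \bfk$. This yields
\[
\operatorname{ind}_D^H(\operatorname{res}_D^H(V)) \cong \operatorname{ind}_D^H(\bfk) \otimes V \cong \bfk \otimes V \cong V,
\]
where the second isomorphism uses the hypothesis $\operatorname{ind}_D^H(\bfk) \cong \bfk$. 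Given any $D$-module isomorphism $\phi\colon \operatorname{res}_D^H(V_1) \to \operatorname{res}_D^H(V_2)$, apply the functor $\operatorname{ind}_D^H$ to obtain an $H$-module isomorphism $\operatorname{ind}_D^H(\phi)$ between $\operatorname{ind}_D^H(\operatorname{res}_D^H(V_1))$ and $\operatorname{ind}_D^H(\operatorname{res}_D^H(V_2))$; composing with the identifications above produces the desired $H$-module isomorphism $V_1 \cong V_2$.

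The main technical point is ensuring that the tensor identity is available in the required generality. As the preceding discussion noted, the tensor identity follows from the adjointness between $\operatorname{res}_D^H$ and $\operatorname{ind}_D^H$ together with the dualities supplied by the bijectivity of the antipode, provided the $H$-module factor is finitely generated over $\bfk$. To handle arbitrary $V \in \mathcal{C}(H)$, I would write $V$ as a directed union of finitely generated $\bfk$-submodules inside $\mathcal{C}(H)$ (possible by the local finiteness hypothesis on $\mathcal{C}(H)$) and use the fact, also recalled in the excerpt, that $\operatorname{ind}_D^H$ commutes with direct limits. This reduces the verification of $\operatorname{ind}_D^H(\bfk \otimes V) \cong \operatorname{ind}_D^H(\bfk) \otimes V$ to the finitely generated case where the explicit tensor identity applies.

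The only genuine obstacle is verifying the tensor identity in this setting, and this has essentially been done in the references cited in the excerpt (\cite{Lin1, PW}); the new observation is simply that, combined with the hypothesis $\operatorname{ind}_D^H(\bfk) \cong \bfk$, it implies that the pair of functors $(\operatorname{res}_D^H, \operatorname{ind}_D^H)$ is enough to recover $H$-module isomorphism classes from $D$-module isomorphism classes. Hence the restriction functor is conservative on isomorphism classes, which is the conclusion of the proposition.
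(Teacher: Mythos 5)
Your proof is correct and is essentially the same as the paper's, which simply states that the claim follows from the tensor identity combined with $\operatorname{ind}_D^H(\bfk)\cong\bfk$. You spell out the details the paper leaves implicit: the natural identification $\operatorname{ind}_D^H(\operatorname{res}_D^H V)\cong\operatorname{ind}_D^H(\bfk)\otimes V\cong V$, and then applying the functor $\operatorname{ind}_D^H$ to the given $D$-module isomorphism.
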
 

\begin{proof} Clearly, if $V_{1}$ is isomorphic to $V_{2}$ over $H$ then these modules must be isomorphic over $D$. Suppose that $V_{1}$ and $V_{2}$ are isomorphic when restricted to $D$. Then by the tensor identity and the fact that $\on{ind}_D^H(\bfk)\cong \bfk$, one has that these modules must be isomorphic over $H$. 
\end{proof}

The proposition  holds for various Hopf algebras such as Lusztig's divided power quantum groups and their Borel subalgebras. The tensor identity holds and the induced representation of the trivial module from a Borel subalgebra remains trivial (cf. \cite{APW}).



\end{document}